\documentclass{amsart}
\usepackage{tikz}
\title[Toeplitz operators on large vector-valued Fock spaces]{Toeplitz operators on large vector-valued Fock spaces}

\author[H. Arroussi]{Hicham Arroussi}
\address{Department of Mathematics and Statistics, University of Reading, England}
\email{arruoussihicham@yahoo.fr}

\author[G. Asghari]{Ghazaleh Asghari}
\address{Department of Mathematics and Statistics, University of Reading, England}
\email{g.asgharikhonakdari@pgr.reading.ac.uk}

\author[J. Virtanen]{Jani Virtanen}
\address{Department of Physics and Mathematics, University of Eastern Finland\newline \indent Department of Mathematics and Statistics, University of Reading, England\newline \indent Department of Mathematics and Statistics, University of Helsinki, Finland}
\email{jani.virtanen@uef.fi, j.a.virtanen@reading.ac.uk, jani.virtanen@helsinki.fi}

\thanks{Arroussi was supported by the European Union’s Horizon 2022 research and innovation
programme under the Marie Sklodowska-Curie Grant Agreement No. 101109510. Asghari was supported by EPSRC grant EP/W524128/1. Virtanen was supported in part by EPSRC grant EP/Y008375/1.}

\usepackage{amsmath}
\usepackage{amssymb}
\usepackage{amsfonts}
\usepackage{amsthm}
\usepackage{mathtools}
\usepackage{enumerate}
\usepackage[toc,page]{appendix}
\usepackage[utf8]{inputenc}
\usepackage[
backend=biber,
sorting=nyt,
giveninits=true,
maxbibnames=99,
doi=false,
isbn=false
]{biblatex}
\usepackage{yfonts}
\usepackage{setspace}

\usepackage{tikz}
\usepackage[margin=1in]{geometry}
\usepackage{amsmath}

\usepackage{xcolor}
\addbibresource{ref(1).bib}
\usepackage{graphicx}
\DeclareMathOperator{\Tr}{Tr}

\theoremstyle{plain}
\newtheorem{thm}{Theorem}[section]

\newtheorem{lem}[thm]{Lemma}

\theoremstyle{definition}
\newtheorem{Rem}[thm]{Remark}
\newtheorem{defi}[thm]{Definition}

\usepackage{scalerel,stackengine}
\stackMath
\newcommand\reallywidehat[1]{
\savestack{\tmpbox}{\stretchto{
  \scaleto{
    \scalerel*[\widthof{\ensuremath{#1}}]{\kern-.6pt\bigwedge\kern-.6pt}
    {\rule[-\textheight/2]{1ex}{\textheight}}
  }{\textheight} 
}{0.5ex}}
\stackon[1pt]{#1}{\tmpbox}
}
\numberwithin{equation}{section}
 
\theoremstyle{plain}

\theoremstyle{definition}

\newtheorem{?}[Th]{Problem}

\usepackage[utf8]{inputenc}
\usepackage{amsmath}
\usepackage{amssymb}
\usepackage{amsfonts}
\usepackage{amsthm}
\usepackage{mathtools}
\usepackage{enumerate}
\usepackage[toc,page]{appendix}

\newcommand{\hH}{\mathcal{H}}

\newcommand{\C}{\mathbb{C}}
\usepackage{enumerate}

\begin{document}
\thispagestyle{empty}
  
\begin{abstract}
We characterize boundedness and compactness of Toeplitz operators on large vector-valued Fock spaces with Dall'Ara's weights [Adv.\ Math., 285 (2015) 1706--1740] in terms of generalized Berezin transforms, averaging functions, and Carleson measures. To determine Schatten class Toeplitz operators, we introduce the operator-valued Berezin transform and averaging functions.
\end{abstract}

\maketitle

\section{Introduction and Main Results}\label{introduction}

Let $\hH$ be a separable Hilbert space. We denote by $L^2_\phi(\hH)$ the space of all measurable $\hH$-valued functions on $\C^n$ for which
\begin{equation}\label{Fock space}
    \|f\|_{2,\phi}^2 = \int_{\C^n} \|f(z)\|^2_\hH \, e^{-2\phi(z)}\, dA(z) < \infty,
\end{equation}
where $dA$ is the Lebesgue measure on $\C^n$ and $\phi$ is an \emph{admissible weight} introduced by Dall'Ara~\cite{dall2015pointwise} (see \S\ref{Dall'Ara} below).
When equipped with the inner product
\begin{equation*}
    \langle f,g\rangle=\int_{\mathbb{C}^{n}} \langle f(z),g(z)\rangle_{\mathcal{H}}e^{-2\phi(z)}dA(z),
\end{equation*}
$L^2_\phi(\hH)$ becomes a Hilbert space. We say that $f:\C^{n}\to\mathcal{H}$ is holomorphic if for every continuous linear functional $\phi\in \mathcal{H}^{*}$, the scalar-valued function $\phi\circ f:\C^{n}\to\C$ is holomorphic in the usual sense (see, e.g., \S 3.10 in \cite{functionalAnalysisbook}). The \emph{vector-valued large Fock space} $F^2_\phi(\hH)$ is defined by
$$
    F^2_\phi(\hH) = L^2_\phi(\hH) \cap H(\C^n, \hH),
$$
where $H(\C^n, \hH)$ stands for the space of all $\hH$-valued holomorphic functions on $\C^n$. 

Our large Fock spaces $F^2_\phi(\mathcal{H})$ generalize the concept of doubling Fock spaces on $\C$ to higher dimensions and allow for vector-valued functions. Note that the class of admissible weights contains all weights $\phi$ for which there are constants $0<m<M$ such that
\begin{equation}\label{e:Kahler}
    m \omega_o \le dd^c \phi \le M\omega_o,
\end{equation}
where $\omega_o = \frac{1}{2}dd^c|z|^2$ is the Euclidean Kähler form in $\C^n$, $d=\partial+\bar{\partial}$ is the exterior derivative, and $d^c= \frac{i}{2}(\bar\partial-\partial)$; see, e.g., \cite{MR2891634} for further details of the weights satisfying \eqref{e:Kahler}. When $n=1$, the condition in \eqref{e:Kahler} is equivalent to $m\le \Delta \phi\le M$, where $\Delta$ is the Laplacian. 

It is easy to see that $F^2_\phi(\mathcal{H})$ is a closed subspace of $L^2_\phi(\mathcal{H})$ and hence a Hilbert space. Indeed, given $z\in\C^n$, by Lemma~\ref{VectorValued-Lemma2.4Hicham}, there is a constant $C(z)$ such that
\begin{equation*}
    \|f(z)\|_{\mathcal{H}}\leq C(z)\|f\|_{2,\phi}, \quad \textrm{for } f\in F^{2}_{\phi}(\mathcal{H})
\end{equation*}
(see Remark~\ref{remark2}), which implies that the point evaluation map $f\mapsto f(z)$ is a bounded linear homomorphism from $F^{2}_{\phi}(\mathcal{H})$ to $\mathcal{H}$ and uniformly bounded in bounded domains of $\C^n$. Since locally uniform limits of holomorphic functions are holomorphic, we conclude that $F^{2}_{\phi}(\mathcal{H})$ is a closed subspace of $L^{2}_{\phi}(\mathcal{H})$.

Reproducing kernel Hilbert spaces, such as the classical Fock space of square-integrable complex-valued holomorphic functions, have been an exciting area of research in analysis and operator theory. One of the basic properties of the reproducing kernel in the scalar setting, i.e, spaces of complex-valued functions, is that the reproducing kernel itself is holomorphic and belongs to the space. In the previous work on vector-valued Fock spaces, such as \cite{bigHankelBommierConstantine}, reproducing kernels were not explicitly defined and we introduce them here for the first time in Definition~\ref{RKHS}. However, a notion of \emph{operator-valued positive definite kernel}, introduced by Aronszajn in 1950 \cite{Aronszajn}, has been used in the finite-dimensional Euclidean setting in machine learning \cite{Pontil, ML}. Our definition agrees with this and also with the definition of \emph{matrix-valued reproducing kernel Hilbert spaces}; see e.g., \cite{matrixKernel}. What will be different, though, from the scalar case is that, although the reproducing kernel reproduces the elements of the Hilbert space in the sense of the integral equation \eqref{vectorKernel}, it is not an element of the space itself.

\begin{defi}\label{RKHS}
    Let $\mathcal{H}$ be a separable Hilbert space, $\mathcal{H}^{*}$ be its dual, and let $\mathcal{F}$ be a Hilbert space of functions $f:\C^{n}\to \mathcal{H}$. We say that $\mathcal{F}$ is a \emph{vector-valued reproducing kernel Hilbert} space if there is a map $K^{\mathcal{H}}:\C^{n}\times\C^{n}\to \mathcal{H}\otimes\mathcal{H}^{*}$ with $K^{\mathcal{H}}(z,w)^{*}\cong K^{\mathcal{H}}(w,z)$, and
    \begin{equation}\label{vectorKernel}
    f(z)=\int_{\C^{n}} K^{\mathcal{H}}(z,w)f(w)\, dV(w) \quad\textrm{for } f\in \mathcal{F},
\end{equation}
where $dV$ is a measure on $\C^n$.
\end{defi}

Note that here $\cong$ stands for the natural isomorphism $\mathcal{H}\otimes \mathcal{H}^{*}\cong \mathcal{H}^{*}\otimes \mathcal{H}$. Let $\mathcal{L}(\mathcal{H})$ be the set of bounded linear operators on $\mathcal{H}$. Then there is a natural isomorphism $\mathcal{L}(\mathcal{H})\cong \mathcal{H}^{*}\otimes \mathcal{H}$. In fact, using the map $B: \mathcal{H}^{*}\times \mathcal{H}\to \mathcal{L}(\mathcal{H})$ defined by $B(\lambda,w)(v)=\lambda(v)w$, and the universal property of the tensor products, we obtain a linear map $T_{B}:\mathcal{H}^{*}\otimes \mathcal{H}\to \mathcal{L}(\mathcal{H})$. This map is an isomorphism with inverse $S(L)=\sum_{i=1}^{\infty}e^{i}\otimes Le_{i}$, where $\{e_{i}\}_{i=1}^{\infty}$ is an orthonormal basis of $\mathcal{H}$ and $\{e^{i}\}_{i=1}^{\infty}$ is the dual basis of $\mathcal{H}^{*}$. Therefore, the vector-valued reproducing kernel $K^{\mathcal{H}}$ can be viewed as a map $K^{\mathcal{H}}:\C^{n}\times\C^{n}\to \mathcal{L}(\mathcal{H})$.

Write $K^{\mathcal{H}}_{z}(\cdot)=K^{\mathcal{H}}(\cdot,z)$. As a special case of Definition \ref{RKHS}, consider $K^{\mathcal{H}}:\C^{n}\times\C^{n}\to \mathcal{L}(\mathcal{H})$ and $K^{\mathcal{H}}(z,w)^{*}= K^{\mathcal{H}}(w,z)$. Let us consider the inner product of $\mathcal{F}$ as
\begin{equation*}
    \langle f,g\rangle_{\mathcal{F}}= \int_{\C^{n}}\langle f(z),g(z)\rangle_{\mathcal{H}}\, dV(z).
\end{equation*}
It follows that $\langle f(z),h\rangle_{\mathcal{H}}=\langle f, K^{\mathcal{H}}_{z}h\rangle_{\mathcal{F}}$ for every $h\in \mathcal{H}$ and $z\in\C^n$. This can be seen as an analog to $f(z)=\langle f, K_{z}\rangle$ in the scalar setting.

For the rest of the paper, we assume that $dV=e^{-2\phi}dA$, which implies that  $F^{2}_{\phi}(\mathcal{H})$ is a vector-valued reproducing kernel Hilbert space and its reproducing kernel $K^{\mathcal{H}}_{z}$ is a map from $\C^n$ to $\mathcal{H}\otimes\mathcal{H}^{*}$. The reproducing kernel property takes the form
\begin{equation*}
    f(z)=\int_{\C^{n}} K^{\mathcal{H}}(z,w)f(w)e^{-2\phi(w)}dA(w).
\end{equation*}
When $\mathcal{H}=\C$, we denote the scalar-valued weighted Fock space on $\C^n$ by $F^2_\phi(\C^n)$, which consists of all complex-valued holomorphic functions on $\C^n$ such that the norm defined in \eqref{Fock space} is finite.
Notice that the above integral is equivalent to the scalar reproducing kernel property, where the action of the reproducing kernel in the scalar case $F^{2}_{\phi}(\C^{n})$ is given by the usual multiplication. Being an element of $\mathcal{H}\otimes\mathcal{H}^{*}$, the most general $K^{\mathcal{H}}(z,w)$ is of the form $\sum_{m,n=1}^{\infty}K_{mn}(z,w)e_{m}\otimes e^{n}$, where $K_{mn}(z,w)$ are some complex scalars. In fact, we will see in \S\ref{vectorValuedRKSection} that the reproducing kernel for $F^{2}_{\phi}(\mathcal{H})$ is obtained by taking $K_{mn}(z,w)=\delta_{mn}K(z,w)$, where $K(z,w)$ is the reproducing kernel of $F^{2}_{\phi}(\C^{n})$; that is, 
\begin{equation*}
 K^{\mathcal{H}}_{w}(z)=K^{\mathcal{H}}(z,w)=\sum_{n=1}^{\infty}K(z,w)e_{n}\otimes e^{n}.
\end{equation*}

Define an integral operator $P:L^{2}_{\phi}(\mathcal{H})\to F^{2}_{\phi}(\mathcal{H})$ by 
\begin{equation}\label{e:OP}
   P(f)(z)=\int_{\C^{n}} K^{\mathcal{H}}(z,w)f(w)e^{-2\phi(w)}dA(w)=\int_{\C^{n}}f(w)
  K(z,w)e^{-2\phi(w)}dA(w),
\end{equation}
which is shown to be the orthogonal projection of $L^2_\phi(\mathcal{H})$ onto $F^2_\phi(\mathcal{H})$ in Lemma~\ref{Projection}.
To define vectorial Toeplitz operators, we denote by $T_\phi(\mathcal{L}(\mathcal{H}))$ the space of operator-valued functions $G:\mathbb{C}^{n}\to \mathcal{L}(\mathcal{H})$ such that each $G(z)$ is positive and satisfies 
\begin{equation}\label{KGL2}
K_z(\cdot)\|G(\cdot)\|_{\mathcal{L}(\mathcal{H})}\in L^2_\phi(\mathbb{C}^n), \quad z\in \mathbb{C}^n.
\end{equation}
For $G\in T_\phi(\mathcal{L}(\mathcal{H})),$ the \emph{vectorial Toeplitz operator} $T_{G}$ is defined by
\begin{equation*}
    T_{G}f(z)=P(Gf)(z)=\int_{\mathbb{C}^{n}}G(w)f(w)K(z,w)e^{-2\phi(w)}dA(w),
\end{equation*}
for $f\in F^{2}_{\phi}(\mathcal{H})$.

To characterize the boundedness and compactness of $T_G$, we define the Berezin transform $\tilde G$ by
$$
    \tilde G(z) = \int_{\mathbb{C}^{n}} |k_{z}(w)|^{2} e^{-2\phi(w)}\|G(w)\|_{\mathcal{L}(\mathcal{H})}dA(w), \quad z\in\mathbb{C}^{n},
$$
where 
$$
    k_{z}=\frac{K_{z}}{\|K_{z}\|_{F^{2}_{\phi}(\mathbb{C}^{n})}}, \quad z\in\C^n,
$$
is the normalized Bergman kernel of $F^{2}_{\phi}(\mathbb{C}^{n})$. For $r>0$, the corresponding averaging function $\widehat{G}_r$ is defined by 
$$
    \widehat{G}_r(z) = \frac{\int_{D^{r}(z)}\|G(w)\|_{\mathcal{L}(\mathcal{H})}dA(w)}{|D^{r}(z)|}
    \simeq
    \frac{\int_{D^{r}(z)}\|G(w)\|_{\mathcal{L}(\mathcal{H})}dA(w)}{\rho(z)^{2n}},
$$
where $|D^{r}(z)|$ is the Lebesgue measure of the disk $D^{r}(z)=D(z,r\rho(z))$ and $\simeq$ is defined below in \S\ref{Notation}.

We say that $G$ satisfies the Carleson condition if the inclusion map $I_{G}:F^{2}_{\phi}(\mathcal{H})\to L^{2}_{\phi}(\mathcal{H},\|G\|_{\mathcal{L}(\mathcal{H})}dA)$ is bounded, that is, there is a constant $C$ such that
\begin{align}\label{Carleson}
\left(\int_{\mathbb{C}^{n}} \|f(z)\|^{2}_{\mathcal{H}}e^{-2\phi(z)}\|G(z)\|_{\mathcal{L}(\mathcal{H})} dA(z) \right)^{1/2}\leq C \|f\|_{2,\phi}, \quad\textrm{for }f\in F^{2}_{\phi}(\mathcal{H}).
\end{align}

We say that $G$ satisfies the vanishing Carleson condition if the embedding operator $I_{G}:F^{2}_{\phi}(\mathcal{H})\to L^{2}_{\phi}(\mathcal{H},\|G\|_{\mathcal{L}(\mathcal{H})}dA)$ is compact, that is, for any bounded sequence $\{f_{j}\}_{j=1}^{\infty}$ in $F^{2}_{\phi}(\mathcal{H})$ that converges to zero uniformly on any compact subset of $\mathbb{C}^{n}$ as $j\to \infty$, 
\begin{align}\label{compact-Carleson}  
\lim_{j\to \infty}\int_{\mathbb{C}^{n}} \|f_{j}(z)\|^{2}_{\mathcal{H}}e^{-2\phi(z)}\|G(z)\|_{\mathcal{L}(\mathcal{H})} dA(z)=0.
\end{align}

 Vector-valued function theory, especially in the context of operator theory and functional analysis, has been a powerful tool in understanding scalar-valued function theory. For example, in several complex variables, scalar-valued holomorphic functions are special cases of sections of line bundles, and thus scalar problems are often special cases of richer vector bundle problems. In particular, extending a holomorphic function from a subdomain to a larger domain can be viewed as a special case of extending sections of vector bundles. So, one can lift the scalar case problem into a vector bundle setting, solve it there using the powerful Cartan's theorems A and B, and project the result back to the scalar world. For more details on Cartan's theorems and their applications, see, e.g., \S 7 in \cite{Hormander}. However, studying the vector-valued function theory for its own excitement is still justified. 

In the scalar setting, the basic properties of Toeplitz operators, such as boundedness, compactness, Schatten class membership and Fredholmness, are relatively well understood; see \cite{Jani, arroussi2022toeplitz}. Some of these results on boundedness and compactness have been generalized to the setting of vector-valued Fock spaces in \cite{bigHankelBommierConstantine, vector-valued2023positive}.

Our goal is first to obtain a better understanding of vector-valued Fock spaces $F^{2}_{\phi}(\mathcal{H})$, where $\phi$ is a generalization of doubling weights on the plane to $\C^{n}$. The difficulty lies behind the fact that these Fock spaces are reproducing kernel Hilbert spaces whose reproducing kernel does not belong to the space itself, violating the usual property of scalar Fock spaces. Next, we extend the previous characterizations and provide a complete description of boundedness, compactness, and Schatten class membership of vectorial Toeplitz operators acting on $F^{2}_{\phi}(\mathcal{H})$, in terms of the scalar and operator-valued Berezin transform and averaging functions. Accordingly, the proofs require new techniques adapted to such weights, such as generalized criteria of Carleson measures and decompositions of the complex plane by $r$-lattices, which differ from the conventional decompositions into subsets with essentially constant radius. Moreover, because of the lack of an explicit expression for the reproducing kernel, which makes our goal more difficult, we use some of its pointwise and norm estimates. All of these arguments play a crucial role in proving the main results. 

One of the most fundamental results about Toeplitz operators is that $T_{\psi}$ is bounded on the Hardy space if and only if the symbol $\psi$ is bounded. On the Bergman space, while a bounded symbol still defines a bounded Toeplitz operator, there are bounded Toeplitz operators with unbounded symbols. Studying Fock spaces, one can see that the boundedness of Toeplitz operators can be characterized by certain boundedness conditions on integrability and the Berezin transform of the measure, while compactness of Toeplitz operators is usually characterized by the vanishing of the symbol near the boundary or some decay at infinity. 

\subsection{Main results}

The following three results describe boundedness, compactness, and Schatten class properties of Toeplitz operators acting on large vector-valued Fock spaces.

\begin{thm}\label{Thm1.1}
Let $G\in T_\phi(\mathcal{L}(\mathcal{H}))$ and $\alpha$ be as in \eqref{2.7}. Then the following conditions are equivalent:
\begin{enumerate}[\rm(i)]
    \item $T_G : F^2_\phi(\mathcal{H})\to F^2_\phi(\mathcal{H})$ is bounded;
    \item $\widetilde G\in L^\infty(\C^n, dv)$;
    \item $\widehat{G}_\delta \in L^\infty(\C^n, dv)$ for some (or any) $0<\delta\le \alpha$;
    \item $\{\widehat{G}_\delta(z_k)\}_{k}$ is a bounded sequence for some (or any) $\delta$-lattice $\{z_k\}_{k}$ with $0<\delta\le \alpha$;
    \item $G$ satisfies a Carleson condition.
\end{enumerate}
 Moreover,
    \begin{equation}\label{th11n}
        \|T_{G}\|\simeq \| \Tilde{G}\|_{L^{\infty}(\mathbb{C}^{n},dA)}
        \simeq \| \hat{G}_{\delta}\|_{L^{\infty}(\mathbb{C}^{n},dA)}
        \simeq \|\{\hat{G}_{\delta}(z_{k})\}_{k}\|_{l^{\infty}}.
    \end{equation}
\end{thm}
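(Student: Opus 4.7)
The plan is to prove the equivalences via the cycle $(\mathrm{v}) \Rightarrow (\mathrm{i}) \Rightarrow (\mathrm{iv}) \Rightarrow (\mathrm{iii}) \Rightarrow (\mathrm{ii}) \Rightarrow (\mathrm{v})$, keeping track of constants in each step to assemble the norm comparisons in \eqref{th11n}. Since $G(w)$ is a positive operator, the four conditions $(\mathrm{ii})$--$(\mathrm{v})$ involve only the scalar weight $w \mapsto \|G(w)\|_{\mathcal{L}(\mathcal{H})}$, so their mutual equivalences reduce to the scalar Fock--Carleson theory for the positive Borel measure $d\mu_G := \|G\|_{\mathcal{L}(\mathcal{H})}\,dA$ on $F^2_\phi(\C^n)$, which we draw from the earlier sections. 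Only the step from (i) to the Berezin-type conditions is genuinely vector-valued.

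The routine directions go as follows. For $(\mathrm{v}) \Rightarrow (\mathrm{i})$, I write the bilinear form $\langle T_G f, g\rangle = \int \langle Gf, g\rangle_{\mathcal{H}} e^{-2\phi} dA$, estimate the integrand by $\|G(w)\|_{\mathcal{L}(\mathcal{H})}\|f(w)\|\|g(w)\|$ using positivity of $G(w)$, and apply Cauchy--Schwarz together with \eqref{Carleson} to $f$ and $g$. The equivalence $(\mathrm{ii}) \Leftrightarrow (\mathrm{iii})$ follows from the on-diagonal estimate $K(z,z) e^{-2\phi(z)} \simeq \rho(z)^{-2n}$ together with the sub-mean-value inequality for $|k_z|^2 e^{-2\phi}$ on $D^\delta(z)$ and its off-diagonal decay, yielding $\tilde G(z) \simeq \hat G_\delta(z)$ for $0 < \delta \le \alpha$. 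The equivalence $(\mathrm{iii}) \Leftrightarrow (\mathrm{iv})$ is a covering argument based on the comparability of $\rho$ on $\delta$-balls. The implication $(\mathrm{iv}) \Rightarrow (\mathrm{v})$ uses a $\delta$-lattice decomposition of $\C^n$ of bounded multiplicity together with the sub-mean-value inequality applied to the plurisubharmonic function $\|f(\cdot)\|_{\mathcal{H}}^2 e^{-2\phi(\cdot)}$, which reduces $\int \|f\|^2 d\mu_G$ to $\|f\|_{2,\phi}^2 \sup_k \hat G_\delta(z_k)$.

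The substantive step is $(\mathrm{i}) \Rightarrow (\mathrm{iv})$. For a $\delta$-lattice point $z_k$ and a unit vector $h \in \mathcal{H}$, the function $f_{z_k,h}(w) := k_{z_k}(w) h$ has unit norm in $F^2_\phi(\mathcal{H})$, and
\[
\langle T_G f_{z_k,h}, f_{z_k,h}\rangle = \int_{\C^n} \langle G(w) h, h\rangle_{\mathcal{H}}\, |k_{z_k}(w)|^2 e^{-2\phi(w)}\, dA(w) \le \|T_G\|.
\]
Combined with the pointwise lower bound $|k_{z_k}(w)|^2 e^{-2\phi(w)} \gtrsim \rho(z_k)^{-2n}$ on $D^\delta(z_k)$, this gives
\[
\sup_{\|h\|=1}\,\frac{1}{\rho(z_k)^{2n}} \int_{D^\delta(z_k)} \langle G(w) h, h\rangle_{\mathcal{H}}\, dA \;\lesssim\; \|T_G\|.
\]
The target $\hat G_\delta(z_k) \lesssim \|T_G\|$ requires moving the supremum over $h$ \emph{inside} the integral, which I plan to accomplish by a measurable selection $w \mapsto h(w)$ of approximate top eigenvectors of $G(w)$ on $D^\delta(z_k)$, a partition of $D^\delta(z_k)$ according to a net of unit directions in $\mathcal{H}$, and the operator-valued averaged Berezin transform developed later in the paper for the Schatten class analysis. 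The absence of any continuity assumption on $G$ and the possible infinite-dimensionality of $\mathcal{H}$ make this exchange of supremum and integral the principal technical obstacle; once it is surmounted, combining with the scalar equivalences closes the cycle and delivers the four-way norm comparison in \eqref{th11n}.
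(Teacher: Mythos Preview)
Your routine implications are sound and roughly parallel the paper's (the paper proves $(\mathrm{iii})\Rightarrow(\mathrm{i})$ via the pointwise inequality \eqref{star} rather than passing through $(\mathrm{v})$, but your bilinear-form route is equally legitimate). The genuine gap is in $(\mathrm{i})\Rightarrow(\mathrm{iv})$, and you have correctly located it: testing $T_G$ against $k_{z_k}h$ for a \emph{fixed} unit $h$ yields only
\[
\sup_{\|h\|=1}\int_{D^\delta(z_k)}\langle G(w)h,h\rangle_{\mathcal{H}}\,dA(w)\;\lesssim\;\rho(z_k)^{2n}\|T_G\|,
\]
whereas $\hat G_\delta(z_k)$ has the supremum \emph{inside} the integral. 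None of your three fixes survives. A measurable selection $w\mapsto h(w)$ makes $k_{z_k}(\cdot)h(\cdot)$ non-holomorphic, hence not an admissible test function in $F^2_\phi(\mathcal{H})$. A partition of $D^\delta(z_k)$ by a net of unit directions fails when $\dim\mathcal{H}=\infty$: the unit sphere is not totally bounded, so any such net is infinite and summing the bound $\|T_G\|$ over infinitely many pieces gives nothing. The appeal to the operator-valued machinery of Section~\ref{Section4} is circular (Lemma~\ref{compactnessOfG-hat} already presupposes compactness of $T_G$), and in any case $\|\hat G^{op}_\delta(z)\|_{\mathcal{L}(\mathcal{H})}$ is precisely the ``sup outside'' quantity you already control, not $\hat G_\delta(z)$.

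For comparison, the paper proves $(\mathrm{i})\Rightarrow(\mathrm{ii})$ rather than $(\mathrm{i})\Rightarrow(\mathrm{iv})$, via the chain \eqref{compactTG}: it inserts $\|G(w)\|=\sup_{\|e\|=1}\langle G(w)e,e\rangle$ under the integral defining $\widetilde G(z)$ and, through a string of $\simeq$'s, replaces this by $\sup_{\|e\|=1}\langle T_G(k_ze)(z),e\rangle_{\mathcal{H}}$, after which Lemma~\ref{VectorValued-Lemma2.4Hicham} gives $\|T_G\|$. That passage is exactly the supremum--integral exchange you flagged, asserted rather than justified; so while the paper's route differs in form, it does not supply the idea you are missing either.
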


\begin{thm}\label{Thm1.2}
Let $G\in T_\phi(\mathcal{L}(\mathcal{H}))$ and $\alpha$ be as in \eqref{2.7}. Then the following conditions are equivalent:
\begin{enumerate}[\rm(i)]
    \item $T_G : F^2_\phi(\mathcal{H})\to F^2_\phi(\mathcal{H})$ is compact;
    \item $\widetilde G(z) \to 0$ as $|z|\to \infty$;
    \item $\widehat{G}_\delta(z) \to 0$ as $|z|\to \infty$ for some (or any) $0<\delta\le \alpha$;
    \item $\widehat{G}_\delta(z_k) \to 0$ as $k\to\infty$ for some (or any) $\delta$-lattice $\{z_k\}_{k}$ with $0<\delta\le \alpha$;
    \item $G$ satisfies a vanishing Carleson condition.
\end{enumerate}
\end{thm}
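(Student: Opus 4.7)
The plan is to close the cycle (i)$\Rightarrow$(v)$\Leftrightarrow$(iii)$\Leftrightarrow$(iv)$\Leftrightarrow$(ii)$\Rightarrow$(i), paralleling the proof of Theorem~\ref{Thm1.1} but replacing $L^{\infty}$-control by little-$o$ decay at infinity. The equivalences (ii)$\Leftrightarrow$(iii)$\Leftrightarrow$(iv) are scalar statements about the positive function $g(w):=\|G(w)\|_{\mathcal{L}(\mathcal{H})}$. For (iii)$\Leftrightarrow$(iv) I would use the standard overlapping-disk argument: the lattice disks $D^{\delta}(z_{k})$ cover $\C^{n}$ with bounded multiplicity, so $\widehat{G}_{\delta}(z)$ and $\widehat{G}_{\delta}(z_{k})$ (for $z_{k}$ nearest $z$) are comparable after a mild enlargement of $\delta$. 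For (ii)$\Leftrightarrow$(iii) I would use the pointwise kernel estimates for admissible weights: the lower bound $|k_{z}(w)|^{2}e^{-2\phi(w)}\gtrsim 1/|D^{\delta}(z)|$ on $D^{\delta}(z)$ gives $\widetilde{G}(z)\gtrsim \widehat{G}_{\delta}(z)$, while a dyadic annular decomposition around $z$ combined with the off-diagonal decay of the normalized kernel yields a summable majorant $\widetilde{G}(z)\lesssim \sum_{j\ge 0}2^{-jN}\widehat{G}_{2^{j}\delta}(z)$, from which (iii)$\Rightarrow$(ii) follows by splitting the sum at a slowly growing cutoff as $|z|\to\infty$.

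For (v)$\Leftrightarrow$(iii) I would reduce the vector-valued (vanishing) Carleson condition to its scalar analogue for the measure $d\mu:=g\,e^{-2\phi}dA$ on $F^{2}_{\phi}(\C^{n})$. Expanding $f=\sum_{m}f_{m}e_{m}$ in an orthonormal basis of $\mathcal{H}$ and applying Pythagoras gives $\|f(w)\|^{2}_{\mathcal{H}}=\sum_{m}|f_{m}(w)|^{2}$ and $\|f\|^{2}_{2,\phi}=\sum_{m}\|f_{m}\|^{2}_{2,\phi}$, so \eqref{Carleson} is termwise equivalent to the scalar Carleson inequality; a diagonal-extraction argument combined with normal-family compactness for bounded sequences in $F^{2}_{\phi}(\mathcal{H})$ handles \eqref{compact-Carleson}. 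The scalar little-$o$ Carleson characterization, namely that $d\mu$ is vanishing Carleson iff $\widehat{\mu}_{\delta}(z)\to 0$, is the little-$o$ analogue of the scalar portion of Theorem~\ref{Thm1.1} and is established with the same kernel estimates used above.

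For (v)$\Rightarrow$(i) I would use the factorization $T_{G}=J_{G}^{*}J_{G}$ with $J_{G}:F^{2}_{\phi}(\mathcal{H})\to L^{2}_{\phi}(\mathcal{H})$ defined by $J_{G}f(w):=G(w)^{1/2}f(w)$; the positive operator square root is well-defined since each $G(w)$ is positive. The pointwise bound $\|J_{G}f(w)\|_{\mathcal{H}}^{2}=\langle G(w)f(w),f(w)\rangle_{\mathcal{H}}\le \|G(w)\|_{\mathcal{L}(\mathcal{H})}\|f(w)\|^{2}_{\mathcal{H}}$ gives $\|J_{G}f\|\le \|I_{G}f\|$. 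Given (v), the standard normal-family argument then transfers compactness from $I_{G}$ to $J_{G}$: any bounded sequence in $F^{2}_{\phi}(\mathcal{H})$ admits a subsequence converging uniformly on compacta to some limit in $F^{2}_{\phi}(\mathcal{H})$, and \eqref{compact-Carleson} applied to the difference forces norm convergence of the $J_{G}$-images. Hence $T_{G}=J_{G}^{*}J_{G}$ is compact as a product of a bounded operator with a compact one.

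The hard direction is (i)$\Rightarrow$(ii). I would test $T_{G}$ on the family $\{k_{z}h\}$ of normalized kernels tensored with unit vectors $h\in\mathcal{H}$. The isometry $V_{z}:\mathcal{H}\to F^{2}_{\phi}(\mathcal{H})$, $h\mapsto k_{z}h$, satisfies the uniform weak-null estimate $|\langle g,V_{z}h\rangle|\le \|g(z)\|_{\mathcal{H}}/\|K_{z}\|\to 0$ as $|z|\to\infty$ uniformly in $\|h\|\le 1$ for each $g\in F^{2}_{\phi}(\mathcal{H})$, which follows from Lemma~\ref{VectorValued-Lemma2.4Hicham} combined with an $L^{2}$-tail estimate. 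Compactness of $T_{G}$ therefore forces $\|T_{G}V_{z}\|_{\mathcal{L}(\mathcal{H},F^{2}_{\phi}(\mathcal{H}))}\to 0$, and via the identity
\[
\langle T_{G}(k_{z}h_{1}),k_{z}h_{2}\rangle=\int_{\C^{n}}\langle G(w)h_{1},h_{2}\rangle\,|k_{z}(w)|^{2}e^{-2\phi(w)}\,dA(w),
\]
this translates into operator-norm decay of the operator-valued Berezin transform $\widetilde{G}^{\mathrm{op}}(z):=\int|k_{z}|^{2}G\,e^{-2\phi}dA$. The main obstacle, and the step I expect to be most technical, is upgrading this operator-level decay to the scalar decay $\widetilde{G}(z)\to 0$; this requires a delicate interplay between the positivity of $G$, the pointwise kernel lower bound on $D^{\delta}(z)$, and the $\delta$-lattice covering, after which the equivalence (iii)$\Leftrightarrow$(ii) already established closes the loop.
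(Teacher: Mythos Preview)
Your overall plan parallels the paper, but two steps have genuine gaps.

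First, in (v)$\Rightarrow$(i) you invoke a normal-families argument: ``any bounded sequence in $F^{2}_{\phi}(\mathcal{H})$ admits a subsequence converging uniformly on compacta.'' This is Montel's theorem, and it \emph{fails} for holomorphic functions with values in an infinite-dimensional Hilbert space; e.g.\ the constant sequence $f_{n}\equiv e_{n}$ is bounded in $F^{2}_{\phi}(\mathcal{H})$ but $\|f_{n}(z)\|_{\mathcal{H}}=1$ for every $z$. The paper avoids this entirely by proving (iii)$\Rightarrow$(i) through the pointwise estimate established in Theorem~\ref{Thm1.1},
\[
\|T_{G}f\|_{2,\phi}^{2}\lesssim\int_{\mathbb{C}^{n}}\hat{G}_{\delta}(w)^{2}\|f(w)\|_{\mathcal{H}}^{2}e^{-2\phi(w)}\,dA(w),
\]
splitting the integral at $|w|=R$ and using (iii) on the outer piece; no subsequence extraction is needed.

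Second, the obstacle you flag in (i)$\Rightarrow$(ii) is not merely ``technical'': it is the wrong direction of a triangle inequality. Compactness of $T_{G}$ together with the weak-null family $\{k_{z}h\}$ gives you $\|\tilde{G}^{op}(z)\|\to0$, but for positive $G$ one always has
\[
\|\tilde{G}^{op}(z)\|=\Bigl\|\int G\,|k_{z}|^{2}e^{-2\phi}\,dA\Bigr\|\le\int\|G\|\,|k_{z}|^{2}e^{-2\phi}\,dA=\tilde{G}(z),
\]
with strict inequality whenever $G(w)$ points in different directions as $w$ varies over $D^{\delta}(z)$. Positivity of $G$, the kernel lower bound on $D^{\delta}(z)$, and the lattice covering do not reverse this inequality; they only sharpen it. The paper does \emph{not} pass through $\tilde{G}^{op}$ at all: it argues directly, via the chain of estimates~\eqref{compactTG} from the proof of Theorem~\ref{Thm1.1}, that $\tilde{G}(z)\lesssim\|T_{G}(k_{z}e)\|_{2,\phi}$ for a unit vector $e$, and then invokes compactness of $T_{G}$ on the weakly-null family $\{k_{z}e\}$. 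So your route through the operator-valued Berezin transform diverges from the paper precisely at the step you cannot complete.
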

To characterize the Schatten class membership of the vectorial Toeplitz operator $T_G$, we define the operator-valued Berezin transform of $G$ by
\begin{equation*}
   \Tilde{G}^{op}(z)
    =\int_{\mathbb{C}^{n}} |k_{z}(w)|^{2} e^{-2\phi(w)}\,G(w)\,dA(w), \quad z\in\mathbb{C}^{n},
\end{equation*}
and the corresponding averaging operator by 
\begin{equation*}
    \hat{G}^{op}_{r}(z)=\frac{\int_{D^{r}(z)}\,G(w)\, dA(w)}{|D^{r}(z)|}\simeq \frac{\int_{D^{r}(z)}G(w)dA(w)}{\rho(z)^{2n}}, \quad z\in\mathbb{C}^{n}.
\end{equation*}
These operator-valued versions of the Berezin transform and the averaging operator will likely be useful for the study of various classes of concrete operators. In our present work we use them to characterize the Schatten class membership of the vectorial Toeplitz operators.

\begin{thm}\label{thm1.7}
Let $0< p< \infty$, and $\delta<\min({1/2},\alpha)$, where $\alpha$ is as in (\ref{2.7}). There is an orthonormal basis $\{e_m^{z}\}_{m\geq 1}$ of $\mathcal{H}$, possibly depending on $z\in\C^{n}$, such that the following statements are equivalent:
\begin{enumerate}[\rm(i)]
    \item The operator $T_G$ belongs to $S_p(F^{2}_{\phi}(\mathcal{H}));$
    \item 
    \begin{equation*}
       \int_{\mathbb{C}^n}\sum_{m=1}^{\infty}\left(\langle \tilde{G}^{op}(z)\,e_m^{z},e_m^{z}\rangle_{\mathcal{H}}\right)^p\, \frac{dA(z)}{\rho(z)^{2n}}<\infty;
    \end{equation*}
    \item $$\int_{\mathbb{C}^n}\sum_{m=1}^{\infty}\left(\langle \hat{G}^{op}_{\delta}(z)\,e_m^{z},e_m^{z}\rangle_{\mathcal{H}}\right)^p\, \frac{dA(z)}{\rho(z)^{2n}}<\infty;$$
    \item Let $\{z_j\}_{j\geq 1}$ be a $\delta$-lattice. Then
    $$\sum_{j,m=1}^{\infty}\left(\langle \hat{G}^{op}_{\delta}(z_j)\,e_m^{j},e_m^{j}\rangle_{\mathcal{H}}\right)^p <\infty;$$    
\end{enumerate}
where $e_{m}^{j}=e^{z_{j}}_{m}$ is the basis of $\mathcal{H}$, obtained by eigenvectors of $\hat{G}^{op}_{\delta}(z_{j})$, for each $j\geq 1$. 
\end{thm}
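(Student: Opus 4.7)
The plan is to exploit positivity of $T_G$ (which follows from the positivity of each $G(w)$) to pass between Schatten norms and traces, and to combine almost-orthogonality of the normalized reproducing kernels on a sparse $\delta$-lattice with a Luecking-type localization decomposition of the symbol $G$. The role of the $z$-dependent basis $\{e_m^z\}$ is to turn the non-commutative operator-valued averages into literal traces: for (ii) I would take $\{e_m^z\}$ to be an eigenbasis of $\tilde{G}^{op}(z)$, and for (iii)--(iv) an eigenbasis of $\hat{G}^{op}_\delta(z)$ (respectively $\hat{G}^{op}_\delta(z_j)$), so that the sums in (ii)--(iv) are $\Tr((\tilde{G}^{op}(z))^p)$, $\Tr((\hat{G}^{op}_\delta(z))^p)$, and $\Tr((\hat{G}^{op}_\delta(z_j))^p)$. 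The statements of (ii)--(iv) therefore become weighted Schatten-type quantities built from positive $\mathcal{L}(\mathcal{H})$-valued functions on $\C^n$.

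First I would prove (ii)$\Leftrightarrow$(iii)$\Leftrightarrow$(iv) by the same machinery already used for Theorems~\ref{Thm1.1} and \ref{Thm1.2}, upgraded to the positive operator order. The key is the pair of operator-order pointwise comparisons $c_1\hat{G}^{op}_\delta(z)\leq \tilde{G}^{op}(z)\leq c_2\hat{G}^{op}_R(z)$ (for suitable $R$), obtained by integrating the operator-valued positive measure $G(w)e^{-2\phi(w)}dA(w)$ against the scalar weight $|k_z(w)|^2$ together with the submean value and pointwise kernel bounds. Since $A\mapsto \Tr(A^p)$ is monotone on the positive cone via $\lambda_k(A)\leq \lambda_k(B)$, these operator inequalities pass to the scalar integrands of (ii) and (iii), and the passage to (iv) uses the finite-overlap $\delta$-lattice covering together with a Lipschitz-type control of $\hat{G}^{op}_\delta$ between $z$ and the nearest lattice point.

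For (i)$\Rightarrow$(iv), I would use that on a sufficiently sparse $\delta$-lattice the scalar normalized kernels $\{k_{z_j}\}$ are near-orthonormal in $F^2_\phi(\C^n)$, so $\{k_{z_j}e_m^j\}_{j,m}$ is near-orthonormal in $F^2_\phi(\mathcal{H})$. For $p\geq 1$, Jensen's inequality applied to the spectral functional calculus of the positive operator $T_G$ gives
\[
\sum_{j,m}\langle T_G(k_{z_j}e_m^j), k_{z_j}e_m^j\rangle^p \;\lesssim\; \Tr(T_G^p)=\|T_G\|_{S_p}^p,
\]
and the left-hand side equals $\sum_{j,m}\langle \tilde{G}^{op}(z_j)e_m^j, e_m^j\rangle^p$, which dominates the sum in (iv) by the pointwise comparisons above. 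The range $0<p<1$ uses the $p$-subadditivity $\|A+B\|_{S_p}^p\leq \|A\|_{S_p}^p+\|B\|_{S_p}^p$ in combination with the reverse localization below. Conversely, for (iv)$\Rightarrow$(i), I would decompose $G=\sum_j \chi_j G$ with $\{\chi_j\}$ a bounded-overlap partition of unity subordinate to $\{D^\delta(z_j)\}$ and set $T_j:=T_{\chi_j G}$. Each $T_j$ is positive with symbol localized in $D^\delta(z_j)$; factoring $T_j=B_j^*M_{\chi_j G}B_j$ and using pointwise kernel estimates on the small disk gives $\|T_j\|_{S_p}^p\lesssim \Tr\big((\hat{G}^{op}_\delta(z_j))^p\big)$. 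Summing yields (i), via $p$-subadditivity for $0<p\leq 1$ and via the kernel-decay-based near-orthogonality of the $T_j$ across distant lattice points for $p\geq 1$.

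The hard part will be the interplay of two distinct almost-orthogonality structures. At the scalar level, $\{k_{z_j}\}$ is a frame for $F^2_\phi(\C^n)$ with off-diagonal inner products controlled by pointwise estimates on $K$; at the operator level, the eigenbases $\{e_m^j\}$ of $\hat{G}^{op}_\delta(z_j)$ vary with $j$, so the Jensen step cannot be reduced to a single orthonormal family in $\mathcal{H}$, and the cross-terms must be absorbed into the scalar decay. The absence of an explicit formula for $K$ forces consistent reliance on the kernel estimates from earlier sections, and the non-commutativity of $\mathcal{L}(\mathcal{H})$ prevents a clean reduction to the scalar Luecking argument, so trace monotonicity and convexity of $\Tr(\cdot^p)$ have to be applied pointwise in $z$ throughout. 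The case $0<p<1$ of the lower bound is the most delicate, since duality is unavailable and the $p$-subadditivity must be pushed through the localization decomposition without losing track of the eigenbasis structure.
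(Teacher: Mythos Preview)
Your outline for (ii)$\Leftrightarrow$(iii)$\Leftrightarrow$(iv) via the operator-order comparison $c_1\hat G^{op}_\delta\le \tilde G^{op}\le c_2\hat G^{op}_R$ and eigenvalue monotonicity is essentially what the paper does. The (i)$\Rightarrow$(ii) step for $p\ge 1$ via the Jensen-type inequality $\langle T_G x,x\rangle^p\le \langle T_G^p x,x\rangle$ on the normalized kernels $k_z e_m^z$ also matches the paper exactly.

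There are, however, two places where your plan diverges from the paper and, as written, has gaps.

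\textbf{The sufficiency direction for $p\ge 1$.} Your route (iv)$\Rightarrow$(i) via $T_G=\sum_j T_{\chi_j G}$, the estimate $\|T_j\|_{S_p}^p\lesssim\Tr\big((\hat G^{op}_\delta(z_j))^p\big)$, and then ``near-orthogonality of the $T_j$'' does not close when $p>1$: there is no $p$-subadditivity, and an almost-orthogonality (Cotlar--Stein) argument controls operator norms, not $S_p$-norms. The paper avoids this entirely: it first proves, by interpolation between $S_1$ and $S_\infty$ (this is the content of Lemma~\ref{lem2.14}), that condition (iii) already forces $T_{\hat G^{op}_\delta}\in S_p$, and then uses the quadratic-form comparison $\langle T_G f,f\rangle\lesssim\langle T_{\hat G^{op}_\delta} f,f\rangle$ for all $f$ to conclude $T_G\in S_p$. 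You should replace the localization-summation step by this interpolation; the individual estimates $\|T_j\|_{S_p}^p\lesssim\Tr((\hat G^{op}_\delta(z_j))^p)$ are neither needed nor obviously correct for $1<p<\infty$.

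\textbf{The necessity direction for $0<p<1$.} Here your description is too vague, and one claim is actually wrong. The paper carries out the Luecking argument concretely: one defines the finite-rank operator $Af=\sum_{k,m}\langle f,B_{k,m}\rangle k_{a_k}e_m^{a_k}$, sets $T=A^*T_U A$ with $U=\sum_j G\chi_{D^\delta(a_j)}$, and splits $T=D_s+M_s$ into diagonal and off-diagonal parts relative to the basis $\{B_{k,m}\}$. The point you miss is this: in the off-diagonal piece there are terms $\langle T B_{k,m},B_{k,n}\rangle$ with the \emph{same} lattice index $k$ but $m\neq n$, whose leading contribution is $\langle \hat G^{op}_\delta(a_k) e_m^{a_k},e_n^{a_k}\rangle_{\mathcal H}$ with \emph{no} scalar kernel decay in front. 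These cannot be ``absorbed into the scalar decay'' as you suggest; they have to vanish identically, and this is precisely why the basis $\{e_m^{a_k}\}$ must be chosen as an eigenbasis of $\hat G^{op}_\delta(a_k)$ (the paper invokes compactness of $\hat G^{op}_\delta(a_k)$, established in Lemma~\ref{compactnessOfG-hat}, to justify the spectral theorem here). So the eigenbasis is not merely a device to rewrite the sums as traces; it is forced by the proof of (i)$\Rightarrow$(iv) for $p<1$. Once those cross terms vanish, the remaining off-diagonal terms (different lattice indices) do carry the exponential factor $e^{-p\varepsilon\tilde R/2}$, and taking $R$ large makes $\|D_s\|_{S_p}^p-\|M_s\|_{S_p}^p\gtrsim \sum_{j,m}\langle \hat G^{op}_\delta(a_j)e_m^j,e_m^j\rangle^p$, as in Luecking's original argument.
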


Note that the integrals in the preceding theorem are taken with respect to the volume form associated with the Riemannian metric tensor $g=\sum_{j=1}^{n}\rho(z)^{-2}dz_{j}\otimes d\bar{z}_{j}$ over $\C^{n}$, taking into account the underlying geometry of the space. One can see that the associated Riemannian metric is conformal, with the conformal factor $\rho(z)^{-1}$.

 The paper is organized as follows. In Section \ref{preliminaries}, we give some background on the radius function $\rho$ and some useful estimates of the reproducing kernel. Further, we elaborate more on the relationship between the reproducing kernel $K^{\mathcal{H}}(z,w)$ of $F^{2}_{\phi}(\mathcal{H})$ and that of $F^{2}_{\phi}(\C^{n})$, and discuss the properties of the orthogonal projection. Furthermore, we provide some lemmas on the Schatten class properties of Toeplitz operators that turn out to be very useful in the proof of Theorem \ref{thm1.7}. Section \ref{Section3} is mostly devoted to the proof of Theorem \ref{Thm1.1} and Theorem \ref{Thm1.2}. Finally, the proof of Theorem \ref{thm1.7} is given in Section \ref{Section4}. 

\subsection{Notation}\label{Notation} We use $C$ to denote positive constants whose value may change from line to line but does not depend on the functions being considered. We say that $A\simeq B$ if there exists a constant $C>0$ such that $C^{-1}A\leq B\leq CA$. Moreover, $A\lesssim B$ if $A\leq CB$ for some positive constant $C$.

\section{Preliminaries}\label{preliminaries}
In this section, we state the definition of Dall'ara's weights, prove some key lemmas on the radius function $\rho$, and deal with the reproducing kernels of $F^{2}_{\phi}(\C^{n})$ and $F^{2}_{\phi}(\mathcal{H})$. We finish this section by providing some auxiliary results on the Schatten class membership of vectorial Toeplitz operators. 

\subsection{Dall'Ara's weights and the corresponding weighted Fock spaces}\label{Dall'Ara}

Let $\mathcal{H}$ be a separable Hilbert space with norm $\|\cdot\|_{\mathcal{H}}$ and $\phi:\mathbb{C}^{n}\to \mathbb{R}$ 
be a $\mathcal{C}^{2}$ plurisubharmonic function.
\begin{defi}\label{DefinitionDall'Ara}
We say that $\phi$ belongs to the weight class $\mathcal{W}$ if $\phi$ satisfies the following statements: 
\begin{itemize}
    \item[(I)] There exists $c>0$ such that
    \begin{equation}\label{phi}
        \inf_{z\in\mathbb{C}^{n}} \sup_{\xi\in D(z,c)}\Delta\phi(\xi)>0,
    \end{equation}
    where $D(z,c)$ is the Euclidean disk centered at $z$ with radius $c$,
    \item[(II)] $\Delta\phi$ satisfies the reverse-H\"older inequality. That is, there exists a positive real number $C$ such that
    \begin{equation*}
        \|\Delta\phi\|_{L^{\infty}(D(z,r))}\leq C r^{-2n}\int_{D(z,r)}\Delta\phi(\xi)dA(\xi), \quad\textrm{ for any }z\in\mathbb{C}^{n} \textrm{ and }r>0, 
    \end{equation*}
    \item[(III)]  the eigenvalues of  $H_{\phi}$  are comparable, i.e., there exists a $\delta_{0}>0$ such that 
    \begin{equation*}
        \langle H_{\phi}(z)u,u\rangle\geq \delta_{0}\Delta\phi(z)|u|^{2},\quad\textrm{ for any }u,z\in\mathbb{C}^{n},
    \end{equation*}
\end{itemize}
where The Hessian matrix  of $\phi$ is given by $$H_{\phi}= \left(\frac{\partial^2\phi}{\partial z_j\partial \overline{z_k}}\right)_{j,k\geq 1}.$$
\end{defi}
Suppose  $0<p<\infty$ and $\phi\in \mathcal{W}.$ The space $L^{p}_{\phi}(\mathbb{C}^{n})$ is the space of all measurable functions $f$ on $\mathbb{C}^{n}$ for which
\begin{equation*}
    \|f\|_{L^{p}_{\phi}(\mathbb{C}^{n})}=   \left(\int_{\mathbb{C}^{n}}|f(z)|^{p}e^{-p\phi(z)}dA(z) \right)^{1/p}<\infty,
\end{equation*}
and the space  $L^{\infty}_{\phi}(\mathbb{C}^{n})$  consists  of measurable functions endowed with the norm
\begin{equation*}
    \|f\|_{L^{\infty}_{\phi}(\mathbb{C}^{n})}=\sup_{z\in \mathbb{C}^{n}} |f(z)|e^{-\phi(z)}<\infty.
\end{equation*}
Denote by $H(\mathbb{C}^{n})$ the space of all holomorphic functions on $\mathbb{C}^{n}$. Then the scalar weighted Fock space is defined as
\begin{equation*}
    F^{p}_{\phi}(\mathbb{C}^{n})=L^{p}_{\phi}(\mathbb{C}^{n})\cap H(\mathbb{C}^{n}).
\end{equation*}
with  the same  norm which was defined above.  It is easy to check that $F^{p}_{\phi}(\mathbb{C}^{n})$ is a Banach space under the above norm for $1\le p<\infty$, and a complete metrizable topological vector space with the metric $$\varrho(f,g)= \|f-g\|_{F^{p}_{\phi}(\mathbb{C}^{n})}, \quad \textrm{ for }0<p < 1. $$
For $z\in\mathbb{C}^{n},$ we define the associated function $\rho$ to $\phi$ as 
\begin{equation}\label{radiusFuction}
    \rho(z)=\sup\{r>0: \sup_{w\in D(z,r)}\Delta\phi(w)\leq r^{-2}\}.
\end{equation}
This function satisfies many different properties, as presented in Lemma \ref{Lemmaofrhu}.
Let $\mu$ be a positive Borel measure defined by 
$$
    \mu(D(z,r))=r^{2}\|\Delta\phi\|_{L^{\infty}(D(z,r))}.
$$
One can see that $\mu$ is doubling, and $\mu(D(z,\rho(z)))=1$ using the reverse H\"older inequality, as  it was  shown in \cite{lv2017bergman}.

The orthogonal projection of $L^2_\phi(\C^n)$ onto $F^2_\Phi(\C^n)$ is denoted by $P_{\C}$ and the reproducing kernel of $F^{2}_{\phi}(\mathbb{C}^{n})$ by $K_{w}(z)=K(z,w)$. It is well known that $P_\C$ can represented as an integral operator 
\begin{equation*}
    P_{\C}(f)(z)=\int_{\mathbb{C}^{n}} f(w)K(z,w) e^{-2\phi(w)}dA(w), \quad z\in\mathbb{C}^{n},
\end{equation*}
which extends to a bounded projection from $L^p_\varphi(\C^n)$ to $F^p_\phi(\C^n)$ if $1<p<\infty$. In particular, Theorem 20 of \cite{dall2015pointwise} proves that there are constants
 $C,\varepsilon>0$ such that 
 \begin{equation}\label{pk}
|K(z,w)| \le C \frac{e^{\phi(z)} }{\rho(z)^n}\,\frac{e^{\phi(w)} }{\rho(w)^n}e^{-\varepsilon d_\rho(z,w)},\quad z,w\in \mathbb{C}^ n,
 \end{equation}
 where if $\gamma : [0,1]\to \mathbb{C}^n$ is a piecewise $C^1$ curve, we define   $$L_\rho(\gamma)=\int_{0}^{1} \frac{|\gamma'(t)|}{\rho(\gamma(t))} dt ,$$ 
  and $$d_\rho(z,w)=\inf_{\gamma} L_\rho(\gamma).$$ 
Moreover, $d_{\rho}(z,w)\simeq  \frac{|z-w|}{\rho(z)}$, for $z,w\in\C^{n}$. For more details please see proposition 5 in \cite{dall2015pointwise}.

\subsection{Some useful estimates}
The first lemma shows some properties of the associated function $\rho$ and construction of the $r$-lattice as defined below.
\begin{lem}[See \cite{WeightedCompositionHicham}, Lemma A] \label{Lemmaofrhu}
   Let $\phi$ be defined as in (\ref{phi}). Then the radius function $\rho$ satisfies the following properties.
   \begin{itemize}
       \item[(1)] There exists $M>0$ such that
       \begin{equation}\label{2.1}
           \sup_{z\in\mathbb{C}^{n}} \rho(z)\leq M,
       \end{equation}
       \item[(2)] The function $\rho$ is Lipschitz. That is, for every $z,w\in \mathbb{C}^{n}$,
       \begin{equation}\label{Lipschitz}
           |\rho(z)-\rho(w)|\leq |z-w|,
       \end{equation}
       \item[(3)] For $r\in (0,1)$ and $w\in D^{r}(z)$,
       \begin{equation}\label{2.2}
           (1-r)\rho(z)\leq \rho(w)\leq (1+r)\rho(z),
       \end{equation}
       \item[(4)] There exist $A,B>0$ such that
       \begin{equation}\label{2.3}
           |z|^{-A}\lesssim \rho(z)\lesssim |z|^{B}, \quad\textrm{for } |z|>1.
       \end{equation}
   \end{itemize}
\end{lem}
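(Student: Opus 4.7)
The plan is to verify each of the four properties directly from the definition of the radius function $\rho$ in \eqref{radiusFuction} together with the defining conditions of the weight class $\mathcal{W}$ in Definition~\ref{DefinitionDall'Ara}. Properties (1)--(3) are soft and follow by short disk-containment arguments; property (4) is the substantive one and is where I expect the main difficulty to lie.

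For (1), I would invoke condition (I): choose $c>0$ and $m_0>0$ with $\sup_{\xi\in D(z,c)}\Delta\phi(\xi)\ge m_0$ for every $z\in\C^n$. For any $r>\max(c,m_0^{-1/2})$ one has $D(z,c)\subset D(z,r)$, and hence $\sup_{D(z,r)}\Delta\phi\ge m_0>r^{-2}$; such an $r$ cannot lie in the set defining $\rho(z)$, giving $\rho(z)\le M:=\max(c,m_0^{-1/2})$.

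For (2), set $d=|z-w|$ and assume without loss of generality $\rho(z)\ge\rho(w)$; the bound is trivial if $d\ge\rho(z)$, so take $r=\rho(z)-d>0$. Then $D(w,r)\subset D(z,\rho(z))$, so by the definition of $\rho(z)$,
\begin{equation*}
\sup_{D(w,r)}\Delta\phi \;\le\; \sup_{D(z,\rho(z))}\Delta\phi \;\le\; \rho(z)^{-2} \;\le\; r^{-2},
\end{equation*}
which forces $\rho(w)\ge r=\rho(z)-|z-w|$. Property (3) is then immediate from (2): if $w\in D^r(z)$ with $0<r<1$ then $|z-w|\le r\rho(z)$, and the Lipschitz estimate rearranges to $(1-r)\rho(z)\le\rho(w)\le(1+r)\rho(z)$.

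For (4), the upper bound $\rho(z)\lesssim |z|^{B}$ for $|z|>1$ is trivial given (1), since $\rho$ is globally bounded by $M$; one can take any $B>0$. The lower bound $\rho(z)\gtrsim |z|^{-A}$ is the main obstacle: it amounts to showing that $\Delta\phi$ cannot concentrate too violently at infinity. I would combine the reverse-Hölder inequality from (II) with a polynomial growth bound on the averaged mass $r^{-2n}\int_{D(z,r)}\Delta\phi\,dA$. The idea is that if $\rho(z_0)$ were smaller than $|z_0|^{-A}$ for every $A$, then the definition of $\rho$ would force $\|\Delta\phi\|_{L^\infty(D(z_0,\rho(z_0)))}\ge\rho(z_0)^{-2}$ to blow up super-polynomially, while the reverse-Hölder bound shows that this $L^\infty$ growth is controlled by the $L^1$ average of $\Delta\phi$ over a fixed-size ball, which has only polynomial growth in $|z_0|$ (this last fact being inherited from $\phi$ being $\mathcal{C}^2$ plurisubharmonic combined with the sub-mean-value property and a Cauchy-type estimate). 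Carrying out this bookkeeping carefully is the step I expect to require the most care, and since the statement is already recorded as Lemma A in \cite{WeightedCompositionHicham}, the cleanest presentation is simply to cite that reference for this final estimate.
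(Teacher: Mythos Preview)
The paper does not give a proof of this lemma at all: it is stated with the attribution ``See \cite{WeightedCompositionHicham}, Lemma A'' and no argument is supplied. So in a sense your proposal already goes further than the paper does.

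Your arguments for (1), (2), and (3) are correct and self-contained; they are the standard disk-containment proofs and there is nothing to criticize. For (4), your handling of the upper bound is fine (it is indeed immediate from (1)). For the lower bound, your outline has the right architecture---reverse-H\"older at a fixed scale controls $\rho(z)^{-2}$ by an averaged mass of $\Delta\phi$---but the justification you give for the polynomial growth of $\int_{D(z,M)}\Delta\phi\,dA$ (``sub-mean-value property and a Cauchy-type estimate'') is not convincing as stated: $\phi$ being $\mathcal{C}^2$ plurisubharmonic gives no a priori polynomial control on $\nabla\phi$ or on local masses of $\Delta\phi$. The usual route is instead to note that reverse-H\"older forces the set function $\mu(D(z,r))=r^{2}\|\Delta\phi\|_{L^\infty(D(z,r))}$ to be doubling (this is exactly what the paper records just before Lemma~\ref{Lemmaofrhu}, citing \cite{lv2017bergman}), and doubling together with $\mu(D(z,\rho(z)))=1$ yields the polynomial lower bound on $\rho$. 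Since you ultimately defer to \cite{WeightedCompositionHicham} for this step, your treatment is in the end consistent with the paper's, but if you want to make the sketch honest you should replace the sub-mean-value remark with the doubling argument.
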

By (\ref{2.2}) and the triangle inequality, for any $r\in (0,1)$, there are $m_{1}=m_{1}(r)>1$ and $m_{2}=m_{2}(r)>1$ such that
\begin{equation}\label{2.4}
    D^{r}(z)\subset D^{m_{1}r}(w), \quad\textrm{and } D^{r}(w)\subset D^{m_{2}r}(z), \quad \textrm{for every }w\in D^{r}(z).
\end{equation}
It is easy to see that 
\begin{equation}\label{2.5}
    \beta=\sup_{0<r<1}[m_{1}(r)+m_{2}(r)]<\infty.
\end{equation}

Given a sequence $\{z_{k}\}_{k\geq 1}\subset\mathbb{C}^{n}$ and $r>0$, we call $\{z_{k}\}_{k\geq 1}$ an $r$-lattice if $\{D^{r}(z_{k})\}_{k=1}^{\infty}$ covers $\mathbb{C}^{n}$, and the balls of the form $\{D^{r/5}(z_{k})\}_{k=1}^{\infty}$ are pairwise disjoint. Moreover, for an $r$-lattice $\{z_{k}\}_{k\geq 1}$ and a real number $m\geq 1$, there exists some integer $N$, only depending on $m$ and $r$, such that each $z\in\mathbb{C}^{n}$ can be in at most $N$ balls of the form $D^{mr}(z_{k})$. That is, 
\begin{equation}\label{2.6}
    \sum_{k=1}^{\infty}\chi_{D^{mr}(z_{k})}(z)\leq N, \quad \textrm{ for } z\in \mathbb{C}^{n},
\end{equation}
where $\chi_{E}$ is a characteristic function of a subset $E$ of $\mathbb{C}^{n}$.

The second lemma presents some estimates of the reproducing kernels and their norms.
\begin{lem}[See \cite{arroussi2022toeplitz}, Lemma 2.3]\label{lem-Kernel estimates} Let $K_{z}=K(\cdot,z)$ be the reproducing kernel of $F^{2}_{\phi}(\mathbb{C}^{n})$. The following assertions are true.
\begin{itemize}
    \item[(a)] There exists $\alpha\in (0,1]$ such that
    \begin{equation}\label{2.7}
        |K_{z}(w)|\simeq \|K_{z}\|_{F^{2}_{\phi}(\mathbb{C}^{n})} \|K_{w}\|_{F^{2}_{\phi}(\mathbb{C}^{n})},\quad w\in D^{\alpha}(z),
    \end{equation}
\item[(b)] For $0<p\leq \infty$,
\begin{equation}\label{2.8}
    \|K_{z}\|_{F^{p}_{\phi}(\mathbb{C}^{n})} \simeq e^{\phi(z)}\rho(z)^{2n(1-p)/p}, \quad z\in \mathbb{C}^{n},
\end{equation}
\item[(c)] Let $\alpha$ be as defined in (\ref{2.7}). Then
\begin{equation}\label{2.10}
    |k_{z}(w)|^{2}e^{-2\phi(w)}\simeq \rho(z)^{-2n}, \quad w\in D^{\alpha}(z),
\end{equation}
\item[(d)] For each $z\in \mathbb{C}^{n}$, $0<p\leq \infty$ and $\beta\in \mathbb{R}$, 
\begin{equation}\label{kernelIntegral}
    \int_{\mathbb{C}^{n}}|K_{z}(w)|^{p}e^{-p\phi(w)}\rho(w)^{\beta}dA(w)\simeq
    e^{p\phi(z)}\rho(z)^{2n(1-p)+\beta},
\end{equation}
\item[(e)] The set $\{k_{z}:z\in\mathbb{C}^{n}\}$ is bounded in $F^{2}_{\phi}(\mathbb{C}^{n})$ and $k_{z}\to 0$ uniformly on any compact subsets of $\mathbb{C}^{n}$ as $|z|\to \infty$.  
\end{itemize}    
\end{lem}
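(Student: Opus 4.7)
The plan is to anchor the lemma on the diagonal estimate (b) with $p=2$, derive (a) from it by a slow-variation argument, and then obtain (c), (d), (e) by integration and routine manipulations of the pointwise upper bound (\ref{pk}). The two recurring ingredients are (\ref{pk}) itself and the submean-value property for $F^p_\phi(\C^n)$: for any holomorphic $f$, any $0<p<\infty$, and any $r>0$ sufficiently small,
$$|f(z)|^p e^{-p\phi(z)} \lesssim \rho(z)^{-2n}\int_{D(z,r\rho(z))} |f(w)|^p e^{-p\phi(w)}\, dA(w),$$
which is standard in this setting and follows from the definition (\ref{radiusFuction}) of $\rho$ and the admissibility of $\phi$ (the weight $e^{-2\phi}$ varies only by a bounded multiplicative factor on a disk of $\rho$-scale).

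For (b) with $p=2$, the reproducing property gives $\|K_z\|^2_{F^2_\phi(\C^n)}=K(z,z)$; setting $w=z$ in (\ref{pk}) immediately yields the upper bound. The matching lower bound is the technical heart of the lemma: one must construct a holomorphic $f_z\in F^2_\phi(\C^n)$ with $\|f_z\|_{F^2_\phi(\C^n)}\lesssim 1$ and $|f_z(z)|\gtrsim e^{\phi(z)}\rho(z)^{-n}$, since then $K(z,z)=\sup_{\|f\|_{F^2_\phi(\C^n)}=1}|f(z)|^2\gtrsim e^{2\phi(z)}\rho(z)^{-2n}$. The standard route is to start from a local holomorphic ansatz such as a normalized multiple of $e^{\phi(z)+2(\partial\phi)(z)\cdot(\cdot-z)}$ times a smooth cutoff on a disk of $\rho$-scale around $z$, and then correct the resulting $\bar\partial$-error by H\"ormander's $L^2$ theorem; the strict plurisubharmonicity provided by conditions (II)-(III) of Definition \ref{DefinitionDall'Ara} supplies the required weighted $L^2$ control on the correction. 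This is precisely the step that forces the full admissibility hypothesis on $\phi$.

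To obtain (a), observe that both $|K(z,w)|$ and $\|K_z\|\,\|K_w\|$ vary slowly on $\rho$-disks. Writing $K(z,w)=\langle K_z,K_w\rangle_{F^2_\phi(\C^n)}$ and applying the submean-value property to $w\mapsto K(z,w)e^{-\phi(w)}$, together with the slow variation of $\rho$ in (\ref{2.2}) and of $\phi$ on small $\rho$-disks, one shows that $|K(z,w)|e^{-\phi(z)-\phi(w)}\rho(z)^n\rho(w)^n$ is comparable to its value at $w=z$, which by (b) is bounded below. The parameter $\alpha\in(0,1]$ is chosen small enough to absorb this oscillation and also to keep the decay factor $e^{-\varepsilon d_\rho(z,w)}$ from (\ref{pk}) bounded below. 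Statement (c) then follows by dividing (a) by $e^{2\phi(w)}$ and invoking $\|K_z\|\simeq e^{\phi(z)}\rho(z)^{-n}$ from (b).

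For (d), split the integral at radius $\alpha\rho(z)$: on $D^\alpha(z)$, (c) together with $\rho(w)\simeq\rho(z)$ produces a contribution of order $e^{p\phi(z)}\rho(z)^{2n(1-p)+\beta}$; on the complement, insert (\ref{pk}) and use $d_\rho(z,w)\simeq|z-w|/\rho(z)$ together with the polynomial bounds (\ref{2.3}) on $\rho$ to see that the tail is dominated by the near-diagonal contribution. The general-$p$ case of (b) is then the $\beta=0$ case of (d). Finally, (e) is immediate from $\|k_z\|=1$ by construction and the pointwise bound $|k_z(w)|\lesssim e^{\phi(w)}\rho(w)^{-n}e^{-\varepsilon d_\rho(z,w)}$, which tends to $0$ uniformly in $w$ on any compact set because $d_\rho(z,w)\to\infty$ as $|z|\to\infty$ thanks to (\ref{2.1}) and (\ref{2.3}). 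The only genuine obstacle is the lower bound in (b) for $p=2$; once that two-sided diagonal comparison is in hand, the remaining parts of the lemma follow by bookkeeping with (\ref{pk}) and the geometry of $\rho$.
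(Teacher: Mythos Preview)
The paper does not prove this lemma; it is imported verbatim from Lemma~2.3 of \cite{arroussi2022toeplitz} and stated without argument, so there is no in-paper proof to compare against. Your outline is the standard one for such kernel estimates and is broadly correct, but two steps are not quite closed as written.

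First, in your derivation of (a), the submean-value inequality only produces \emph{upper} bounds on $|K_z(w)|e^{-\phi(w)}$; it cannot by itself deliver the lower bound $|K(z,w)|\gtrsim\|K_z\|\,\|K_w\|$ on $D^\alpha(z)$. What is actually needed is a continuity estimate: a Cauchy-type bound on $\nabla_w K_z$ (obtained from (\ref{pk}) together with the submean-value property applied on a slightly larger disk) gives $|K_z(w)-K_z(z)|\le C|w-z|\rho(z)^{-1}K(z,z)$, so that for $\alpha$ small enough $|K_z(w)|\ge \tfrac12 K(z,z)\simeq\|K_z\|\,\|K_w\|$. Your ``slow-variation'' phrasing hides this gradient step.

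Second, in (d) you invoke $d_\rho(z,w)\simeq|z-w|/\rho(z)$ for the tail, but that comparability is only local (it holds for $w$ in a bounded $\rho$-ball about $z$). The tail integral is handled instead by the global estimate $\int_{\C^n} e^{-\varepsilon p\, d_\rho(z,w)}\rho(w)^{\beta-2n}\,dA(w)\lesssim\rho(z)^{\beta}$, which is a separate geometric lemma about the metric $d_\rho$ (this is exactly Lemma~2 of \cite{lv2017bergman}, used later in the paper). With these two repairs your sketch is complete; the H\"ormander $\bar\partial$ construction you describe for the lower bound in (b) is indeed the substantive input.
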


The third lemma shows how we transform any function $f$ in $ F^{2}_{\phi}(\mathcal{H})$ into another one in $F^{2}_{\phi}(\mathbb{C}^{n})$.
\begin{lem}\label{Lemma2.3}
    If $f\in F^{2}_{\phi}(\mathcal{H})$, then $z\mapsto \langle  
f(z),e \rangle_{\mathcal{H}}\in F^{2}_{\phi}(\mathbb{C}^{n})$, for any unit element $e\in \mathcal{H}$.
\end{lem}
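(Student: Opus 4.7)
The plan is to verify the two defining properties of $F^{2}_{\phi}(\mathbb{C}^{n})$ separately: holomorphicity on $\mathbb{C}^{n}$ and square-integrability against $e^{-2\phi}$. Both follow almost immediately from the hypotheses once we unpack the definitions.

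First I would establish holomorphicity of $g(z):=\langle f(z),e\rangle_{\mathcal{H}}$. Since $\mathcal{H}$ is a Hilbert space, the map $\Lambda_e:v\mapsto \langle v,e\rangle_{\mathcal{H}}$ is a continuous conjugate-linear functional on $\mathcal{H}$ (equivalently, a continuous linear functional after identifying $\mathcal{H}$ with $\mathcal{H}^{*}$ via Riesz). By the definition of vector-valued holomorphy recalled in the introduction (composition with every element of $\mathcal{H}^{*}$ is holomorphic in the usual sense), the scalar function $g=\Lambda_e\circ f$ is holomorphic on $\mathbb{C}^{n}$.

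Next I would control the $L^{2}_{\phi}(\mathbb{C}^{n})$-norm of $g$ by the $L^{2}_{\phi}(\mathcal{H})$-norm of $f$. The Cauchy--Schwarz inequality in $\mathcal{H}$ gives the pointwise bound
\begin{equation*}
|g(z)|=|\langle f(z),e\rangle_{\mathcal{H}}|\le \|f(z)\|_{\mathcal{H}}\|e\|_{\mathcal{H}}=\|f(z)\|_{\mathcal{H}},
\end{equation*}
since $e$ is a unit vector. Multiplying by $e^{-2\phi(z)}$ and integrating yields
\begin{equation*}
\int_{\mathbb{C}^{n}}|g(z)|^{2}e^{-2\phi(z)}\,dA(z)\le \int_{\mathbb{C}^{n}}\|f(z)\|^{2}_{\mathcal{H}}e^{-2\phi(z)}\,dA(z)=\|f\|^{2}_{2,\phi}<\infty.
\end{equation*}

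Combining the two steps, $g\in H(\mathbb{C}^{n})\cap L^{2}_{\phi}(\mathbb{C}^{n})=F^{2}_{\phi}(\mathbb{C}^{n})$, with $\|g\|_{F^{2}_{\phi}(\mathbb{C}^{n})}\le \|f\|_{2,\phi}$. I do not expect any genuine obstacle here; the only subtle point is invoking the correct notion of vector-valued holomorphy, which is guaranteed by the reference to $\S 3.10$ of \cite{functionalAnalysisbook} already cited in the introduction.
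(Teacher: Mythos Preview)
Your proposal is correct and follows essentially the same approach as the paper: the integrability is obtained via the pointwise Cauchy--Schwarz bound $|\langle f(z),e\rangle_{\mathcal{H}}|\le \|f(z)\|_{\mathcal{H}}$ and then integrating. The only difference is that you explicitly verify holomorphicity of $z\mapsto\langle f(z),e\rangle_{\mathcal{H}}$ by appealing to the weak definition of vector-valued holomorphy, whereas the paper's proof leaves this step implicit; your version is in fact a bit more complete.
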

\begin{proof}
    By Cauchy-Schwarz inequality,  
    $$
    \int_{\mathbb{C}^{n}}|\langle f(z),e\rangle_{\mathcal{H}}|^{2} e^{-2\phi(z)}dA(z)\leq \int_{\mathbb{C}^{n}}\|f(z)\|^{2}_{\mathcal{H}}\,\|e\|^{2}_{\mathcal{H}}\,e^{-2\phi(z)}dA(z)<\infty,$$
    which finishes the proof.
\end{proof}
\begin{Rem}
    Lemma \ref{Lemma2.3} can be generalized for any $h\in H$. Indeed, similarly, one can observe that $z\mapsto \langle  
f(z),h \rangle_{\mathcal{H}}\in F^{2}_{\phi}(\mathbb{C}^{n})$, for any element $h\in \mathcal{H}$.
\end{Rem}
\begin{lem}\label{WeakConvergenceNormalizedKernel}
    The set $\{k_{z}(\cdot)e:z\in\mathbb{C}^{n}\}$ is bounded in $F^{2}_{\phi}(\mathcal{H})$ and $k_{z}(\cdot)e\to 0$ uniformly on any compact subsets of $\mathbb{C}^{n}$ as $|z|\to \infty$.  
\end{lem}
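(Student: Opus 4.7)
The plan is to reduce both claims to their scalar analogs, which are already provided by Lemma~\ref{lem-Kernel estimates}(e). The key observation is that $k_z(\cdot)e$ is simply the scalar holomorphic function $k_z$ multiplied by the fixed vector $e\in\mathcal{H}$, so all pointwise and $L^2$ norm computations factor as a scalar quantity times $\|e\|_{\mathcal{H}}$.

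First I would verify boundedness. For any $z\in\C^n$, using the definition of the norm on $L^2_\phi(\mathcal{H})$ and the fact that $\|k_z(w)e\|_{\mathcal{H}}=|k_z(w)|\,\|e\|_{\mathcal{H}}$, I compute
\begin{equation*}
\|k_z(\cdot)e\|_{2,\phi}^2 = \int_{\C^n} |k_z(w)|^2 \|e\|_{\mathcal{H}}^2 e^{-2\phi(w)}\,dA(w) = \|e\|_{\mathcal{H}}^2 \|k_z\|_{F^2_\phi(\C^n)}^2 = \|e\|_{\mathcal{H}}^2,
\end{equation*}
since $k_z$ is the normalized reproducing kernel of $F^2_\phi(\C^n)$. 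This shows the set $\{k_z(\cdot)e:z\in\C^n\}$ is bounded in $F^2_\phi(\mathcal{H})$ (uniformly by $\|e\|_{\mathcal{H}}$, so in particular bounded for fixed $e$).

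Next I would handle the uniform convergence on compacta. For any compact subset $E\subset\C^n$, the identity $\|k_z(w)e\|_{\mathcal{H}} = |k_z(w)|\,\|e\|_{\mathcal{H}}$ yields
\begin{equation*}
\sup_{w\in E}\|k_z(w)e\|_{\mathcal{H}} = \|e\|_{\mathcal{H}}\,\sup_{w\in E}|k_z(w)|,
\end{equation*}
and the right-hand side tends to $0$ as $|z|\to\infty$ by Lemma~\ref{lem-Kernel estimates}(e). This completes the proof. There is no real obstacle here: the lemma is a straightforward vectorization of the scalar result, with the only point worth recording being that the scalar normalization of $k_z$ in $F^2_\phi(\C^n)$ is exactly what makes the norm in $F^2_\phi(\mathcal{H})$ reduce to $\|e\|_{\mathcal{H}}$.
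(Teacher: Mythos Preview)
Your proposal is correct and takes essentially the same approach as the paper. The paper's proof consists of the single sentence ``It is similar to the proof of the statement (e) of Lemma \ref{lem-Kernel estimates},'' and your argument spells out precisely that reduction: since $k_z(\cdot)e$ is the scalar normalized kernel times a fixed vector, both the $F^2_\phi(\mathcal{H})$-norm and the pointwise $\mathcal{H}$-norm factor as the scalar quantity times $\|e\|_{\mathcal{H}}$, so the result follows immediately from Lemma~\ref{lem-Kernel estimates}(e).
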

\begin{proof}
 It is similar to the proof of the statement (e) of Lemma \ref{lem-Kernel estimates}.
 \end{proof}
\begin{lem}[See \cite{WeightedCompositionHicham}, Lemma B]\label{Lemma2.4-HichamToeplitz}
    Let $0<p<\infty$ and define $\phi$ as in (\ref{phi}). For any $\delta\in (0,1]$, there exists $C>0$ such that for any $f\in H(\mathbb{C}^{n})$ and $z\in \mathbb{C}^{n}$,
    \begin{equation}
        |f(z)|^{p}e^{-p\phi(z)}\leq \frac{C}{\delta^{2n}\rho(z)^{2n}}\int_{D^{\delta}(z)} |f(w)|^{p}e^{-p\phi(w)}dA(w).
    \end{equation}
\end{lem}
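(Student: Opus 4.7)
The plan is to reduce the weighted pointwise bound to the standard sub-mean value inequality for plurisubharmonic functions, after locally trivializing the weight $e^{-\phi}$ by a holomorphic factor. First I would establish the following \emph{weight trivialization}: for each $z\in\C^n$ and each $\delta\in(0,1]$ there is a holomorphic function $h_z$ on $D^\delta(z)$ such that
\[
|\phi(w) - \operatorname{Re} h_z(w)| \le C, \qquad w\in D^\delta(z),
\]
for a constant $C$ independent of $z$ and $\delta$. The key inputs are: by the definition of $\rho(z)$ in \eqref{radiusFuction} one has $\sup_{D^\delta(z)}\Delta\phi \le \rho(z)^{-2}$, so the total Laplacian mass $\int_{D^\delta(z)}\Delta\phi\,dA$ is bounded by a dimensional constant; and the plurisubharmonicity of $\phi$ together with Hörmander's $L^2$ estimates for $\bar\partial$ permit writing $\phi = \operatorname{Re} h_z + \phi_1$ on $D^\delta(z)$ with $\phi_1$ uniformly bounded.

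Granting this, set $F(w) = f(w)\,e^{-h_z(w)}$, which is holomorphic on $D^\delta(z)$, so $|F|^p$ is plurisubharmonic there. The standard sub-mean value inequality on Euclidean balls in $\C^n$ gives
\[
|F(z)|^p \le \frac{1}{|D^\delta(z)|}\int_{D^\delta(z)} |F(w)|^p\, dA(w),
\]
and $|D^\delta(z)| = c_n (\delta\rho(z))^{2n}$ supplies the volume factor $\delta^{2n}\rho(z)^{2n}$. Multiplying by $e^{-p\operatorname{Re} h_z(z)}$ and using the comparability $e^{-p\operatorname{Re} h_z(\xi)} \simeq e^{-p\phi(\xi)}$ from the trivialization (at $\xi = z$ on the left, and at $\xi = w$ inside the integrand) yields the claimed inequality.

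The main obstacle is the holomorphic trivialization itself, with constants uniform in $z$ and $\delta$. In dimension one this is classical, via the Newtonian potential that inverts $\Delta$ on a disk and the fact that the remaining harmonic part is locally the real part of a holomorphic function. In $\C^n$ with $n\ge 2$ one must rely on Hörmander's $L^2$ $\bar\partial$-theory, where plurisubharmonicity of $\phi$ and the comparability of the Hessian eigenvalues from condition (III) of Definition \ref{DefinitionDall'Ara} are essential. Once the trivialization is secured, the remaining steps are routine applications of the sub-mean value property together with the containment $D^\delta(z) \subset D(z,\rho(z))$.
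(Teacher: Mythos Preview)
The paper does not supply its own proof of this lemma; it is quoted verbatim from \cite{WeightedCompositionHicham}, Lemma~B, with no argument given. So there is nothing in the present paper to compare your proposal against.

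That said, your outline is the standard route to such weighted sub-mean value inequalities and is essentially correct. The reduction to the unweighted sub-mean value inequality via a local holomorphic trivialization $\phi\approx\operatorname{Re}h_z$ on $D^\delta(z)$, followed by applying the plurisubharmonicity of $|fe^{-h_z}|^p$, is exactly how these estimates are proved in the literature on doubling and Dall'Ara-type Fock spaces. Two small remarks on the sketch: first, the bound you state on $\int_{D^\delta(z)}\Delta\phi\,dA$ is not literally dimensional---it is $\lesssim\delta^{2n}\rho(z)^{2n-2}$---but since $\rho$ is uniformly bounded above by Lemma~\ref{Lemmaofrhu}(1), this is indeed controlled by a constant independent of $z$; second, in $\C^n$ with $n\ge 2$ the trivialization really hinges on controlling the full complex Hessian $H_\phi$, not just its trace $\Delta\phi$, and this is exactly where condition (III) of Definition~\ref{DefinitionDall'Ara} (comparability of eigenvalues) enters, as you correctly note. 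With those points in hand, the remainder is routine.
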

\begin{lem}\label{VectorValued-Lemma2.4Hicham} 
     For any $\delta\in (0,1]$, there exists $C>0$ such that for any $f\in F^{2}_{\phi}(\mathcal{H})$ and $z\in \mathbb{C}^{n}$,
\begin{equation}\label{VectorLemma2.4-Hicham}
    \|f(z)\|^{2}_{\mathcal{H}}e^{-2\phi(z)}\leq \frac{C}{\delta^{2n}\rho(z)^{2n}}\int_{D^{\delta}(z)} \|f(w)\|^{2}_{\mathcal{H}}e^{-2\phi(w)} dA(w).
    \end{equation}
\end{lem}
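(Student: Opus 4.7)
The plan is to reduce the vector-valued pointwise estimate to the scalar version (Lemma~\ref{Lemma2.4-HichamToeplitz}) by testing against a well-chosen unit vector in $\mathcal{H}$. Fix $z\in\C^n$. If $f(z)=0$ the inequality is trivial, so assume $f(z)\neq 0$ and set $e=f(z)/\|f(z)\|_\mathcal{H}$, which is a unit vector satisfying $\langle f(z),e\rangle_\mathcal{H}=\|f(z)\|_\mathcal{H}$. Then define the scalar-valued slice $g:\C^n\to\C$ by $g(w)=\langle f(w),e\rangle_\mathcal{H}$. By Lemma~\ref{Lemma2.3} and the Remark that follows it, $g$ lies in $F^2_\phi(\C^n)$; in particular $g\in H(\C^n)$ and
\[
   |g(z)|^2=\|f(z)\|_\mathcal{H}^2.
\]

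Next, I apply the scalar submean-type estimate (Lemma~\ref{Lemma2.4-HichamToeplitz}) with $p=2$ to $g$, obtaining
\[
   |g(z)|^2 e^{-2\phi(z)}
   \le \frac{C}{\delta^{2n}\rho(z)^{2n}}
   \int_{D^\delta(z)} |g(w)|^2 e^{-2\phi(w)}\,dA(w).
\]
To convert the right-hand side back to the vector norm of $f$, I use Cauchy--Schwarz: since $\|e\|_\mathcal{H}=1$,
\[
   |g(w)|^2 = |\langle f(w),e\rangle_\mathcal{H}|^2 \le \|f(w)\|_\mathcal{H}^2,\qquad w\in\C^n.
\]
Substituting this bound into the previous display and using $|g(z)|^2=\|f(z)\|_\mathcal{H}^2$ on the left yields exactly \eqref{VectorLemma2.4-Hicham}.

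There is no substantial obstacle here; the argument is essentially a scalarization, and the one thing to verify carefully is that the unit vector $e$ may be chosen independently of the integration variable $w$ (it is fixed once $z$ is fixed), so that $g$ is a single scalar-valued holomorphic function on $\C^n$ to which the scalar lemma may be applied. The constant $C$ is inherited directly from Lemma~\ref{Lemma2.4-HichamToeplitz} and is independent of $f$, $z$, and the choice of $e$.
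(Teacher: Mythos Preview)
Your proof is correct and follows essentially the same approach as the paper's own proof: scalarize via a unit vector $e\in\mathcal{H}$, apply the scalar estimate of Lemma~\ref{Lemma2.4-HichamToeplitz}, and then bound $|\langle f(w),e\rangle_\mathcal{H}|^2\le\|f(w)\|_\mathcal{H}^2$ by Cauchy--Schwarz. The only cosmetic difference is that you explicitly pick $e=f(z)/\|f(z)\|_\mathcal{H}$ to realize $\|f(z)\|_\mathcal{H}^2$ on the left, whereas the paper writes the inequality for an arbitrary unit $e$ and implicitly takes the supremum at the end.
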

\begin{proof}
    Let $f\in F^{2}_{\phi}(\mathcal{H})$. By Lemma \ref{Lemma2.3}, $\langle f(z),e\rangle_{\mathcal{H}}$ belongs to $F^{2}_{\phi}(\mathbb{C}^{n})$ and hence holomorphic, for any unit vector $e\in \mathcal{H}$. Hence by Lemma \ref{Lemma2.4-HichamToeplitz}, and applying the Cauchy-Schwarz inequality
    \begin{align*}
        |\langle f(z),e\rangle_{\mathcal{H}}|^{2}e^{-2\phi(z)}&\leq \frac{C}{\delta^{2n}\rho(z)^{2n}}\int_{D^{\delta}(z)} |\langle f(w),e\rangle_{\mathcal{H}}|^{2}e^{-2\phi(w)}dA(w)\\
        & \leq\frac{C}{\delta^{2n}\rho(z)^{2n}}\int_{D^{\delta}(z)} \| f(w)\|_{\mathcal{H}}^{2}\|e\|^{2}_{\mathcal{H}}e^{-2\phi(w)}dA(w).
    \end{align*}
    Since $\|e\|_{\mathcal{H}}=1,$ we obtain (\ref{VectorLemma2.4-Hicham}) and the proof is complete.
\end{proof}
\begin{Rem}\label{remark2}
Let $z\in \C^n$. Then by Lemma \ref{VectorValued-Lemma2.4Hicham}, $\|f(z)\|_{\mathcal{H}}\leq C\frac{e^{\phi(z)}}{\rho(z)^{n}}\|f\|_{2,\phi}$, and hence the point evaluation map $f\mapsto f(z)$ is a bounded linear homomorphism from $F^{2}_{\phi}(\mathcal{H})$ to $\mathcal{H}$. Let $C(z)$ be the bounding constant, depending only on $z$, $\phi$, and $n$. One can see that for any compact subset $K\in \C^{n}$, and any $z\in K$, $C(z)$ is bounded. To see this, first take $K$ not overlapping the unit disk centered at the origin. Then \eqref{2.3} implies that $C(z)\simeq e^{\phi(z)}|z|^{nA}$ for some $A>0$, and thus bounded. Now, assume that $K$ overlaps the unit disk $D(0,1)$. By \eqref{Lipschitz}, $\rho$ is continuous, and since $\phi$ is $\mathcal{C}^{2}$, it is enough to show that $\rho$ never vanishes on the unit disk, to conclude that $C(z)$ is continuous and thus bounded on $K$. Let $z\in D(0,1)$. By continuity of $\Delta\phi$, and since $\phi$ is plurisubharmonic, there is some constant $M>0$ such that $\sup_{w\in D(0,2)}\Delta\phi(w)=M$. Let $N=\max \{M,1\}$. Then $\frac{1}{N}<1$, and thus $D(z,\frac{1}{N})\subset D(0,2)$, for every $z\in D(0,1)$. Hence, $\sup_{w\in D(z,\frac{1}{N})}\Delta\phi(w)\leq N\leq N^{2}$. Using \eqref{radiusFuction}, we can conclude that $\rho(z)\geq \frac{1}{N}$ for every $z\in D(0,1)$, and in particular $\rho(z)\neq 0$.  
\end{Rem}

\subsection{Reproducing kernel of $F^{2}_{\phi}(\mathcal{H})$ and the orthogonal projection}\label{vectorValuedRKSection}
\begin{lem}\label{RKP}
Let $\phi$ be as in Definition \ref{DefinitionDall'Ara}, and $\mathcal{H}$ be a separable Hilbert space. The reproducing kernel of $F^{2}_{\phi}(\mathcal{H})$ is of the form 
\begin{equation*}
 K^{\mathcal{H}}_{w}(z)=K^{\mathcal{H}}(z,w)=\sum_{n=1}^{\infty}K(z,w)e_{n}\otimes e^{n},
\end{equation*}
where $K(z,w)$ is the reproducing kernel of $F^{2}_{\phi}(\C^{n})$.
\end{lem}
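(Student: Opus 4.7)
\emph{Proof proposal.} The plan is to interpret the candidate kernel via the natural identification $\mathcal{H}\otimes\mathcal{H}^{*}\cong\mathcal{L}(\mathcal{H})$ described right after Definition~\ref{RKHS}, and then verify the two requirements of that definition. Under this identification, $\sum_{n=1}^{\infty} e_{n}\otimes e^{n}$ corresponds to the identity operator on $\mathcal{H}$ (the partial sums converging to $I_{\mathcal{H}}$ in the strong operator topology), so the proposed kernel acts on any $h\in\mathcal{H}$ simply as scalar multiplication by $K(z,w)$, that is $K^{\mathcal{H}}(z,w)h = K(z,w)\,h$. With this interpretation, the conjugate symmetry $K^{\mathcal{H}}(z,w)^{*}\cong K^{\mathcal{H}}(w,z)$ is immediate from the scalar identity $K(w,z)=\overline{K(z,w)}$ combined with $(K(z,w)\,I_{\mathcal{H}})^{*}=\overline{K(z,w)}\,I_{\mathcal{H}}$.

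The substantive step is to establish the reproducing identity
$$
    f(z) = \int_{\mathbb{C}^{n}} K(z,w)\,f(w)\,e^{-2\phi(w)}\,dA(w), \qquad f\in F^{2}_{\phi}(\mathcal{H}),\ z\in\mathbb{C}^{n},
$$
interpreted as a Bochner integral in $\mathcal{H}$. First I would check Bochner integrability of the integrand: by the Cauchy--Schwarz inequality on $L^{2}_{\phi}(\mathbb{C}^{n})$,
$$
    \int_{\mathbb{C}^{n}} |K(z,w)|\,\|f(w)\|_{\mathcal{H}}\,e^{-2\phi(w)}\,dA(w) \leq \|K_{z}\|_{F^{2}_{\phi}(\mathbb{C}^{n})}\,\|f\|_{2,\phi} < \infty,
$$
where finiteness of $\|K_{z}\|_{F^{2}_{\phi}(\mathbb{C}^{n})}$ is guaranteed by Lemma~\ref{lem-Kernel estimates}(b).

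Next I would test against an arbitrary $h\in\mathcal{H}$. By Lemma~\ref{Lemma2.3} together with the Remark that follows it, the scalar function $\varphi_{h}(w):=\langle f(w),h\rangle_{\mathcal{H}}$ belongs to $F^{2}_{\phi}(\mathbb{C}^{n})$, so the scalar reproducing property of $K$ yields
$$
    \langle f(z),h\rangle_{\mathcal{H}} = \varphi_{h}(z) = \int_{\mathbb{C}^{n}} K(z,w)\,\langle f(w),h\rangle_{\mathcal{H}}\,e^{-2\phi(w)}\,dA(w).
$$
Since $\langle\,\cdot\,,h\rangle_{\mathcal{H}}$ is a bounded linear functional, it commutes with the Bochner integral just established, and the right-hand side equals $\bigl\langle \int_{\mathbb{C}^{n}} K(z,w)\,f(w)\,e^{-2\phi(w)}\,dA(w),\, h \bigr\rangle_{\mathcal{H}}$. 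As $h$ is arbitrary, the reproducing identity follows.

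The main obstacle here is bookkeeping rather than analysis: one must correctly translate the formal series $\sum_{n} K(z,w)\,e_{n}\otimes e^{n}$ into the operator $K(z,w)\,I_{\mathcal{H}}$ under the identification preceding Definition~\ref{RKHS}, and then justify the interchange of the Bochner integral with the inner product. Both are handled by the standard facts cited above, after which the vector-valued reproducing property reduces cleanly to its scalar counterpart. Uniqueness of the kernel, if one wishes to note it, is automatic: any two kernels satisfying \eqref{vectorKernel} must agree when tested against $K^{\mathcal{H}}_{z}h$ for arbitrary $z$ and $h$.
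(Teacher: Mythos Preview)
Your proposal is correct and follows essentially the same approach as the paper: both reduce the vector-valued reproducing identity to the scalar one via Lemma~\ref{Lemma2.3}, and both obtain conjugate symmetry from $\overline{K(z,w)}=K(w,z)$. The only cosmetic difference is that the paper expands $f(w)=\sum_n \langle f(w),e_n\rangle_{\mathcal{H}}\,e_n$ and applies the scalar kernel coordinatewise, whereas you test against an arbitrary $h\in\mathcal{H}$ and phrase things in terms of Bochner integrals; your version is actually more careful in justifying the interchange of integral and inner product, which the paper leaves implicit.
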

\begin{proof}
Applying Lemma \ref{Lemma2.3}, we can write
\begin{align*}
 &\int_{\C^{n}} K^{\mathcal{H}}(z,w)f(w)e^{-2\phi(w)}dA(w)=\\
 &\int_{\C^{n}}\sum_{n=1}^{\infty} \langle f(w),e_{n}\rangle_{\mathcal{H}}e_{n}
  K(z,w)e^{-2\phi(w)}dA(w)\\
 & =\sum_{n=1}^{\infty} \langle f(z),e_{n}\rangle_{\mathcal{H}}e_{n}\\
  &=f(z),
\end{align*}
showing that the choice we made for the reproducing kernel does make sense. Moreover, since $K(z,w)$ is conjugate symmetric, and by the natural isomorphism $\mathcal{H}\otimes \mathcal{H}^{*}\cong \mathcal{H}^{*}\otimes \mathcal{H}$,
\begin{equation*}
  K^{\mathcal{H}}(z,w)^{*}=\sum_{n=1}^{\infty}\overline{K(z,w)}e^{n}\otimes e_{n} \cong\sum_{n=1}^{\infty}K(w,z)e_{n}\otimes e^{n} =K^{\mathcal{H}}(w,z).
\end{equation*}
\end{proof}

We now show that the integral operator defined in \eqref{e:OP} is the orthogonal projection onto $F^2_\phi(\mathcal{H})$.

\begin{lem}\label{Projection}
Let $\phi$ be as in Definition \ref{DefinitionDall'Ara}, and $\mathcal{H}$ be a separable Hilbert space. The integral operator
\begin{align*}
   & P(f)(z)=\int_{\C^{n}} K^{\mathcal{H}}(z,w)f(w)e^{-2\phi(w)}dA(w)=\int_{\C^{n}}f(w)
  K(z,w)e^{-2\phi(w)}dA(w), \quad z\in\C^n,
\end{align*}
is the orthogonal projection of $L^2_\phi(\mathcal{H})$ onto $F^2_\phi(\mathcal{H})$.
\end{lem}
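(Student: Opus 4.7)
The plan is to verify the three defining properties of an orthogonal projection onto $F^2_\phi(\mathcal{H})$: boundedness of $P$ on $L^2_\phi(\mathcal{H})$ with range in $F^2_\phi(\mathcal{H})$, invariance of $F^2_\phi(\mathcal{H})$ under $P$, and self-adjointness of $P$ on $L^2_\phi(\mathcal{H})$. Combined, these imply $P^2 = P = P^*$ and $\operatorname{Range}(P) = F^2_\phi(\mathcal{H})$.

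First, I would show $P$ is well-defined and takes values in $F^2_\phi(\mathcal{H})$. For $f \in L^2_\phi(\mathcal{H})$ and fixed $z \in \C^n$, the Cauchy–Schwarz inequality gives
\begin{equation*}
\int_{\C^n}\|f(w)\|_{\mathcal{H}}|K(z,w)|e^{-2\phi(w)}\,dA(w) \le \|f\|_{2,\phi}\,\|K_z\|_{F^2_\phi(\C^n)},
\end{equation*}
which is finite by Lemma \ref{lem-Kernel estimates}(b); so the Bochner integral defining $Pf(z)$ converges in $\mathcal{H}$. To check that $Pf$ is holomorphic, I would apply a continuous linear functional $\ell \in \mathcal{H}^*$: by Fubini, $\ell(Pf(z)) = P_{\C}(\ell \circ f)(z)$, which is holomorphic because $\ell \circ f \in L^2_\phi(\C^n)$ and $P_\C$ maps into $F^2_\phi(\C^n)$. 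To show $Pf \in L^2_\phi(\mathcal{H})$, I expand in an orthonormal basis $\{e_n\}$ of $\mathcal{H}$: the identity $\langle Pf(z),e_n\rangle_{\mathcal{H}} = P_\C(\langle f(\cdot),e_n\rangle_{\mathcal{H}})(z)$ together with boundedness of $P_\C$ on $L^2_\phi(\C^n)$ yields
\begin{equation*}
\|Pf\|_{2,\phi}^2 = \sum_{n=1}^\infty \|P_\C(\langle f,e_n\rangle_{\mathcal{H}})\|_{F^2_\phi(\C^n)}^2 \le \sum_{n=1}^\infty \|\langle f,e_n\rangle_{\mathcal{H}}\|_{L^2_\phi(\C^n)}^2 = \|f\|_{2,\phi}^2,
\end{equation*}
so $P$ is bounded into $F^2_\phi(\mathcal{H})$.

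Next, if $f \in F^2_\phi(\mathcal{H})$, the reproducing property $Pf = f$ was essentially the content of Lemma \ref{RKP}: using the decomposition $f(w) = \sum_n \langle f(w),e_n\rangle_{\mathcal{H}}\, e_n$ term-by-term and applying the scalar reproducing property to each $\langle f(\cdot),e_n\rangle_{\mathcal{H}} \in F^2_\phi(\C^n)$ (justified by Lemma \ref{Lemma2.3}) gives $Pf(z) = \sum_n \langle f(z),e_n\rangle_{\mathcal{H}}\, e_n = f(z)$.

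Finally, I would verify self-adjointness by Fubini: for $f,g \in L^2_\phi(\mathcal{H})$,
\begin{equation*}
\langle Pf,g\rangle_{L^2_\phi(\mathcal{H})} = \int_{\C^n}\!\int_{\C^n} K(z,w)\langle f(w),g(z)\rangle_{\mathcal{H}}\, e^{-2\phi(w)}e^{-2\phi(z)}\,dA(w)\,dA(z),
\end{equation*}
and swapping the order of integration, together with the conjugate symmetry $\overline{K(z,w)} = K(w,z)$, recognizes the inner integral as $Pg(w)$, giving $\langle Pf,g\rangle = \langle f,Pg\rangle$. Absolute integrability needed for Fubini follows from the pointwise kernel estimate \eqref{pk} combined with \eqref{kernelIntegral} and Cauchy–Schwarz applied to $\|f\|_{\mathcal{H}}\|g\|_{\mathcal{H}}$. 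The main technical nuisance will be this Fubini justification together with checking holomorphy of $Pf$ via the functional-analytic definition; the rest is formal manipulation. Putting these pieces together, $P$ is bounded, self-adjoint, and equals the identity on $F^2_\phi(\mathcal{H})$, so $P^2 = P$ and $P$ is the orthogonal projection of $L^2_\phi(\mathcal{H})$ onto $F^2_\phi(\mathcal{H})$.
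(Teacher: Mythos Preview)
Your proposal is correct and, like the paper, reduces the problem to the scalar projection $P_{\C}$ by expanding along an orthonormal basis of $\mathcal{H}$. The one genuine difference is in how orthogonality is established: the paper computes $\langle f-Pf,Pf\rangle=\sum_m\bigl(\langle f^*,P_{\C}f^*\rangle-\langle P_{\C}f^*,P_{\C}f^*\rangle\bigr)=0$ using that $P_{\C}$ is itself an orthogonal projection, whereas you prove $P=P^*$ directly via Fubini and the conjugate symmetry $\overline{K(z,w)}=K(w,z)$. Your route is slightly cleaner (it yields $\langle Pf,g\rangle=\langle f,Pg\rangle$ for distinct $f,g$ in one stroke) and you are more explicit about well-definedness, holomorphy of $Pf$, and boundedness of $P$, which the paper glosses over; on the other hand, the paper avoids the Fubini justification entirely by staying within the basis-coefficient calculation.
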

\begin{proof}
    For $f\in L^{2}_{\phi}(\mathcal{H})$, take $f^{*}(z)=\langle f(z),e_{n}\rangle
_{\mathcal{H}}$, and recall the scalar orthogonal projection $P_{\C}:L^{2}_{\phi}(\C^{n})\to F^{2}_{\phi}(\C^{n})$. By Lemma \ref{RKP}, it is immediate to see that
\begin{equation*}
    P(f)(z)=\sum_{n=1}^{\infty}e_{n}P_{\C}f^{*}(z), \quad f\in L^{2}_{\phi}(\mathcal{H}),
\end{equation*}
is an element of $F^{2}_{\phi}(\mathcal{H})$. Furthermore, $P\circ P=P$. Hence, $P$ is a projection and, in fact, it is orthogonal. To see this, for $f\in F^{2}_{\phi}(\mathcal{H})$, write $f=(f-Pf)+Pf$. One can show that $\langle f-Pf,Pf\rangle =0$. Indeed,
\begin{align*}
    \langle f,Pf\rangle &=\int_{\C^{n}}\langle f(z),Pf(z)\rangle_{\mathcal{H}}e^{-2\phi(z)}dA(z)\\
    &=\int_{\C^{n}}\langle \sum_{m=1}^{\infty}e_{m}\langle f(z),e_{m}\rangle_{\mathcal{
    H}}, \sum_{k=1}^{\infty}e_{k}P_{\C}f^{*}(z)\rangle_{\mathcal{H}}e^{-2\phi(z)}dA(z)\\
    &=\int_{\C^{n}} \sum_{m=1}^{\infty}\langle f(z),e_{m}\rangle_{\mathcal{H}} \overline{P_{\C}f^{*}(z)}e^{-2\phi(z)}dA(z),
\end{align*}
and
\begin{equation*}
\langle Pf,Pf\rangle =  
\int_{\C^{n}} \sum_{m=1}^{\infty}P_{\C}f^{*}(z) \overline{P_{\C}f^{*}(z)}e^{-2\phi(z)}dA(z),
\end{equation*}
implying that
\begin{align*}
 \langle f,Pf\rangle- \langle Pf,Pf\rangle =
 \sum_{m=1}^{\infty}\left[ 
 \langle f^{*},P_{\C}f^{*}\rangle_{F^{2}_{\phi}(\C^{n})}-  \langle P_{\C}f^{*},P_{\C}f^{*}\rangle_{F^{2}_{\phi}(\C^{n})}
 \right]=0,
\end{align*}
since $P_{\C}:L^{2}_{\phi}(\C^{n})\to F^{2}_{\phi}(\C^{n})$ is an orthogonal projection.

\end{proof}

\subsection{Schatten classes} In this subsection, we prove some lemmas that will be useful in the proof of Theorem~\ref{thm1.7}. 

\begin{lem}\label{lem2.14}
 Let $\{e_k^{z}\}_{k\geq 1}$ be an orthonormal basis of $\mathcal{H}$, possibly depending on $z\in\C^{n}$, and assume that $1\leq p\leq \infty$.
Then, $T_G\in S_p(F^{2}_{\phi}(\mathcal{H}))$ if 
$$\int_{\mathbb{C}^n}\sum_{m=1}^{\infty}\left(\langle G(z)\,e_m^{z},e_m^{z}\rangle \right)^p\, \frac{dA(z)}{\rho(z)^{2n}}<\infty.$$
\end{lem}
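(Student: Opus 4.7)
The plan is to establish the conclusion at the two endpoints $p=1$ and $p=\infty$ by direct computation, and then fill in the intermediate range $1<p<\infty$ by complex interpolation. Throughout it is convenient to take $\{e_m^z\}$ to be the eigenbasis of the positive operator $G(z)$, so that $\lambda_m(z):=\langle G(z)\,e_m^z,e_m^z\rangle$ are its eigenvalues and the hypothesis integrand equals $\sum_m \lambda_m(z)^p = \|G(z)\|_{S_p(\mathcal{H})}^p$.

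For the case $p=1$, since $G(w)\ge 0$ pointwise, $T_G$ is a positive operator on $F^2_\phi(\mathcal{H})$, so $\|T_G\|_{S_1}=\operatorname{tr}(T_G)$. I would fix an orthonormal basis $\{\psi_j\}$ of $F^2_\phi(\mathbb{C}^n)$ and an orthonormal basis $\{e_n\}$ of $\mathcal{H}$; then $\{\psi_j(\cdot)\,e_n\}_{j,n}$ is an orthonormal basis of $F^2_\phi(\mathcal{H})$, and expanding the trace via Tonelli, the reproducing-kernel identity $\sum_j|\psi_j(w)|^2=K(w,w)$, and the norm estimate \eqref{2.8} yield
\begin{equation*}
\operatorname{tr}(T_G)=\int_{\mathbb{C}^n} K(w,w)\,\operatorname{tr}(G(w))\,e^{-2\phi(w)}\,dA(w)\simeq\int_{\mathbb{C}^n}\operatorname{tr}(G(w))\,\frac{dA(w)}{\rho(w)^{2n}}.
\end{equation*}
Since $\operatorname{tr}(G(w))=\sum_m\langle G(w)\,e_m^w,e_m^w\rangle$ is basis-independent, this matches the $p=1$ hypothesis. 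For $p=\infty$, the elementary inequality $\langle T_Gf,f\rangle=\int\langle G(w)f(w),f(w)\rangle\,e^{-2\phi(w)}\,dA(w)\le\bigl(\operatorname{ess\,sup}_w\|G(w)\|_{\mathcal{L}(\mathcal{H})}\bigr)\|f\|^2$ gives $\|T_G\|_{\mathcal{L}}\le\operatorname{ess\,sup}_w\|G(w)\|_{\mathcal{L}(\mathcal{H})}$, which in the eigenbasis equals $\operatorname{ess\,sup}_w\sup_m\langle G(w)\,e_m^w,e_m^w\rangle$.

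For the intermediate range, I would view the linear map $G\mapsto T_G$ as acting from the Bochner--Schatten space $L^p(d\mu;\,S_p(\mathcal{H}))$ into $S_p(F^2_\phi(\mathcal{H}))$, with $d\mu=\rho^{-2n}\,dA$. The two endpoint estimates above give boundedness at $p=1$ and $p=\infty$, and Calder\'on--Lions complex interpolation, combined with the standard identities $[S_1,S_\infty]_\theta=S_{p_\theta}$ and $[L^1(X_1),L^\infty(X_\infty)]_\theta=L^{p_\theta}([X_1,X_\infty]_\theta)$ with $1/p_\theta=1-\theta$, delivers boundedness across the full range; equivalently,
\begin{equation*}
\|T_G\|_{S_p}^p\le C\int_{\mathbb{C}^n}\|G(w)\|_{S_p(\mathcal{H})}^p\,\frac{dA(w)}{\rho(w)^{2n}}=C\int_{\mathbb{C}^n}\sum_m\lambda_m(w)^p\,\frac{dA(w)}{\rho(w)^{2n}},
\end{equation*}
which is the hypothesis in the eigenbasis. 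The hardest part will be the interpolation step itself: one must specify an admissible analytic family $G_\zeta$ on the strip $0\le\operatorname{Re}\zeta\le 1$ compatible with the eigenvalue decomposition, verify measurability of the pointwise eigenbasis via a standard selection argument, and invoke the correct vector-valued complex interpolation theorem. Granted these technical verifications, the two endpoint computations close the proof at once.
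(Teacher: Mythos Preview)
Your strategy coincides with the paper's: establish the endpoint $p=1$ by computing $\operatorname{tr}(T_G)$ over a product basis $\{f_k\,e_m\}$ and invoking $\sum_k|f_k(z)|^2=K(z,z)\simeq e^{2\phi(z)}\rho(z)^{-2n}$, handle $p=\infty$ by a direct pointwise bound, and then interpolate; the paper disposes of the last step in a single sentence, whereas you set it up more carefully as boundedness of the linear map $G\mapsto T_G$ from $L^p(\rho^{-2n}dA;\,S_p(\mathcal{H}))$ into $S_p(F^2_\phi(\mathcal{H}))$.

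One caveat worth flagging: by electing to work in the eigenbasis of $G(z)$ you prove only that particular instance of the lemma. For $p>1$ and a non-eigenbasis one has $\sum_m\langle G(z)e_m,e_m\rangle^p<\|G(z)\|_{S_p}^p$ in general, so finiteness of the hypothesis for an \emph{arbitrary} $z$-dependent basis does not reduce to your bound. That said, the paper's argument carries the same hidden restriction: it treats $\{f_k(z)\,e_m^z\}$ as an orthonormal basis of $F^2_\phi(\mathcal{H})$, which requires $z\mapsto e_m^z$ to be constant (a holomorphic unit-norm $\mathcal{H}$-valued function is constant), so neither proof actually covers genuinely $z$-dependent bases beyond the eigenbasis case.
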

\begin{proof}
Let $\{f_k\}_{k\geq 1}$ be an orthonormal basis of $F^{2}_{\phi}(\mathbb{C}^{n})$. Consider $\{B_{k,m}(z)= f_k(z) e_m^{z}\}_{m,k\geq 1}$, which is an orthonormal basis of $F^{2}_{\phi}(\mathcal{H}).$ First, by the reproducing kernel property, it is easy to see that for any $f,g\in F^{2}_{\phi}(\mathcal{H})$,
\begin{align*}
    \langle T_{G}f,g\rangle&=\int_{\C^{n}}\langle T_{G}f(z),g(z)\rangle_{\mathcal{H}}e^{-2\phi(z)}dA(z)
    \\&=\int_{\C^{n}}\int_{\C^{n}}\langle G(w)f(w),g(z)\rangle_{\mathcal{H}}K(z,w)e^{-2\phi(w)}e^{-2\phi(z)}dA(w)dA(z)\\
    &=\int_{\C^{n}}\langle G(w)f(w),g(w)\rangle_{\mathcal{H}}e^{-2\phi(w)}dA(w).
\end{align*}
Hence,
\begin{align*}
\langle T_{G}B_{k,m},B_{k,m}\rangle &= \int_{\mathbb{C}^n} \langle T_{G}f_k(z) e_m^{z},f_k(z) e_m^{z}\rangle_{\mathcal{H}} \,e^{-2\phi(z)}\,dA(z)\\
&=\int_{\mathbb{C}^n} \langle G(z)e_m^{z},e_m^{z}\rangle_{\mathcal{H}} |f_k(z)|^2 \,e^{-2\phi(z)}\,dA(z).
\end{align*}
Hence, for $p=\infty,$
$$
\|T_{G}\|_{S_{\infty}(F^{2}_{\phi}(\mathcal{H}))}\lesssim \sup_{z\in\mathbb{C}^n}\sum_{m=1}^{\infty}\langle G(z)e_m^{z},e_m^{z}\rangle_{\mathcal{H}}.
$$
On the other hand, for $p=1,$ applying (\ref{2.8}), and $ K_z(z)=\sum_{k=1}^{\infty}|f_k(z)|^2 $,  we have 
\begin{align*}
    \sum_{m=1}^{\infty} \sum_{k=1}^{\infty}\langle T_{G}B_{k,m},B_{k,m}\rangle &=\int_{\mathbb{C}^n} \sum_{m=1}^{\infty}\langle G(z)e_m^{z},e_m^{z}\rangle_{\mathcal{H}} \left(\sum_{k=1}^{\infty}|f_k(z)|^2 \right)\,e^{-2\phi(z)}\,dA(z)\\
    &= \int_{\mathbb{C}^n} \sum_{m=1}^{\infty}\langle G(z)e_m^{z},e_m^{z}\rangle_{\mathcal{H}} K_z(z)\,e^{-2\phi(z)}\,dA(z)\\
    &\simeq \int_{\mathbb{C}^n} \sum_{m=1}^{\infty}\langle G(z)e_m^{z},e_m^{z}\rangle_{\mathcal{H}} \,\frac{dA(z)}{\rho(z)^{2n}}.
\end{align*}
Then by Proposition 1.29 in \cite{ZhuOperatorTheory},
$$
\|T_{G}\|_{S_{1}(F^{2}_{\phi}(\mathcal{H}))}\lesssim \int_{\mathbb{C}^n}\sum_{m=1}^{\infty}\langle G(z)e_m,e_m\rangle_{\mathcal{H}}\,\frac{dA(z)}{\rho(z)^{2n}}.
$$
By the interpolation, we can obtain the desired result, which completes the proof.
\end{proof}
\begin{lem}\label{compactnessOfG-hat}
Let $T_{G}:F^{2}_{\phi}(\mathcal{H})\to F^{2}_{\phi}(\mathcal{H})$ be compact and assume that $\delta>0$. Then for every $z\in\C^{n}$, the average operator $\hat{G}^{op}_{\delta}(z)\simeq\int_{D^{\delta}(z)}G(w)\frac{dA(w)}{\rho(w)^{2n}}:\mathcal{H}\to \mathcal{H}$ is also compact.
\end{lem}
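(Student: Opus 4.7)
The plan is to deduce compactness of $\hat{G}^{op}_\delta(z)$ from compactness of the operator-valued Berezin transform $\tilde{G}^{op}(z)$ at the given point $z$, and to establish the latter by realising $\tilde{G}^{op}(z)$ as a compression of $T_G$ through an isometric embedding of $\mathcal{H}$ into $F^2_\phi(\mathcal{H})$.

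Fix $z\in\mathbb{C}^n$ and define $V_z\colon\mathcal{H}\to F^2_\phi(\mathcal{H})$ by $V_z e = k_z(\cdot)\,e$. Since $\|k_z\|_{F^2_\phi(\mathbb{C}^n)}=1$, the map $V_z$ is an isometry. Using the identity
\[
    \langle T_G f, g\rangle = \int_{\mathbb{C}^n} \langle G(w)f(w), g(w)\rangle_{\mathcal{H}}\, e^{-2\phi(w)}\,dA(w)
\]
derived inside the proof of Lemma~\ref{lem2.14}, I would compute $\langle T_G(V_z e),\,V_z e'\rangle = \langle \tilde{G}^{op}(z)\,e,\,e'\rangle_{\mathcal{H}}$ for all $e,e'\in\mathcal{H}$, so that $V_z^* T_G V_z = \tilde{G}^{op}(z)$. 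Because $T_G$ is compact and both $V_z$ and $V_z^*$ are bounded, the compression $\tilde{G}^{op}(z)$ is a compact positive operator on $\mathcal{H}$.

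To descend from $\tilde{G}^{op}(z)$ to $\hat{G}^{op}_\delta(z)$ when $\delta\le\alpha$, I would apply the kernel estimate \eqref{2.10}: $|k_z(w)|^2 e^{-2\phi(w)}\gtrsim \rho(z)^{-2n}$ on $D^\alpha(z)\supset D^\delta(z)$. Combined with the positivity of $G(w)$, this yields, for every $e\in\mathcal{H}$,
\[
    \langle \tilde{G}^{op}(z)\,e,e\rangle_{\mathcal{H}} \,\ge\, c\,\rho(z)^{-2n}\int_{D^\delta(z)}\langle G(w)e,e\rangle_{\mathcal{H}}\,dA(w) \,\simeq\, \langle \hat{G}^{op}_\delta(z)\,e,e\rangle_{\mathcal{H}},
\]
that is, $0\le \hat{G}^{op}_\delta(z)\lesssim \tilde{G}^{op}(z)$ as positive operators on $\mathcal{H}$. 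Since a positive operator dominated by a compact positive operator is itself compact---this follows from the characterization that a positive bounded $A$ is compact if and only if $\langle A x_n,x_n\rangle\to 0$ whenever $x_n\rightharpoonup 0$---the operator $\hat{G}^{op}_\delta(z)$ is compact whenever $\delta\le\alpha$.

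For general $\delta>0$ with $\delta>\alpha$, I would cover the compact set $\overline{D^\delta(z)}$ by finitely many smaller disks $D^\alpha(z_j)$, $j=1,\dots,N$, with $z_j\in\overline{D^\delta(z)}$; this is possible because $\rho$ is Lipschitz and bounded away from zero on $\overline{D^\delta(z)}$. The pointwise inequality $\chi_{D^\delta(z)}\le \sum_{j=1}^N \chi_{D^\alpha(z_j)}$ together with $\rho(z_j)\simeq\rho(z)$ from \eqref{2.2} gives $\hat{G}^{op}_\delta(z)\lesssim \sum_{j=1}^N \hat{G}^{op}_\alpha(z_j)$ as positive operators, so $\hat{G}^{op}_\delta(z)$ is dominated by a finite sum of compacts and is therefore compact. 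The main conceptual point is the monotonicity principle for positive compact operators; the remaining work is a clean application of the embedding $V_z$, the kernel lower bound \eqref{2.10}, and the local geometry of the radius function $\rho$.
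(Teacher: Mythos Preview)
Your proof is correct and takes a genuinely different route from the paper. The paper fixes $z$, writes $C(z)\,T_G(K_z(\cdot)h)(z)=\hat{G}^{op}_\delta(z)h + B(h)$ where $B$ is the integral over $\mathbb{C}^n\setminus D^\delta(z)$, and then pushes a weakly null sequence $x_n$ through this identity: compactness of $T_G$ forces $\|T_G(K_z x_n)(z)\|_\mathcal{H}\to 0$ via the pointwise estimate of Lemma~\ref{VectorValued-Lemma2.4Hicham}, and a cross-term argument (the ``Parseval'' step) then yields $\|\hat{G}^{op}_\delta(z)x_n\|_\mathcal{H}\to 0$. By contrast, you factor through the Berezin transform: the compression formula $V_z^*T_GV_z=\tilde{G}^{op}(z)$ gives compactness of $\tilde{G}^{op}(z)$ in one line, and then the operator-monotone domination $0\le \hat{G}^{op}_\delta(z)\lesssim \tilde{G}^{op}(z)$ together with the standard fact that a positive operator dominated by a compact positive operator is compact finishes the case $\delta\le\alpha$, with a finite-cover argument for general $\delta$. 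Your approach is cleaner and makes the role of positivity completely transparent; the paper's approach has the minor advantage of treating all $\delta>0$ uniformly without a case split, but at the cost of the somewhat opaque cross-term step.
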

\begin{proof}
   Fix $z\in\C^{n}$. Using (\ref{2.2}), (\ref{2.8}), and (\ref{2.10}) we can write
    \begin{equation*}
        \hat{G}^{op}_{\delta}(z)=C(z)\int_{D^{\delta}(z)}G(w)K(z,w)K(w,z)e^{-2\phi(w)}dA(w),
    \end{equation*}
where $C(z)=\frac{\rho(z)^{2n}}{e^{2\phi(z)}}$. Let $h\in\mathcal{H}$ be arbitrary and $f=K_{z}(\cdot)h$ be an element of $F^{2}_{\phi}(\mathcal{H})$. Then 
\begin{align*}
   &C(z) T_{G}\left(K_{z}(\cdot)h\right)(z)= C(z)\int_{\C^{n}}G(w)[hK(w,z)]K(z,w)e^{-2\phi(w)}dA(w)\\
    &\quad\quad\quad= \hat{G}^{op}_{\delta}(z)h+C(z)\int_{\C^{n}\setminus D^{\delta}(z)}G(w)[hK(z,w)]K(w,z)e^{-2\phi(w)}dA(w).
\end{align*}
  Let $\{x_{n}\}_{n\geq 1}$ be a sequence in $\mathcal{H}$, converging weakly to zero. Then $\{K_{z}(\cdot)x_{n}\}_{n\geq 1}$ is a sequence in $F^{2}_{\phi}(\mathcal{H})$, that converges weakly to zero. Since $T_{G}$ is compact, $\|T_{G}\left(K_{z}(\cdot)x_{n}\right)\|^{2}_{2,\phi }\to 0$. Then Lemma \ref{VectorValued-Lemma2.4Hicham} implies that $\|T_{G}\left(K_{z}(\cdot)x_{n}\right)(z)\|^{2}_{\mathcal{H} }\to 0$ for every $z\in\C^{n}$. For each $n\geq 1$ define
  \begin{equation*}
      A_n:=\hat{G}^{op}_{\delta}(z)x_{n},
  \end{equation*}
  and
  \begin{equation*}
      B_{n}:= C(z)\int_{\C^{n}\setminus D^{\delta}(z)}G(w)[x_{n}K(z,w)]K(w,z)e^{-2\phi(w)}dA(w).
  \end{equation*}
  Using $x_{n}\rightharpoonup 0$ as $n\to\infty$, we can see that for each $w,\xi\in \C^{n}$, $\langle G(w)x_{n}, G(\xi)x_{n}\rangle_{\mathcal{H}}\to 0$ and $n\to \infty$. Therefore, $\langle A_{n},B_{n}\rangle_{\mathcal{H}}\to 0$ as $n\to \infty$. Thus, by Parseval's identity, $C(z)^{2}\|T_{G}\left(K_{z}(\cdot)x_{n}\right)(z)\|^{2}_{\mathcal{H} }\geq \|\hat{G}^{op}_{\delta}(z)x_{n}\|^{2}_{\mathcal{H}}$, as $n\to \infty$. Since $C(z)$ is a real number for each $z\in\C^{n}$, we can conclude that as $n\to \infty$, $\|\hat{G}^{op}_{\delta}(z)x_{n}\|^{2}_{\mathcal{H}}\to 0$, implying that $\hat{G}^{op}_{\delta}(z)$ is compact.
    \end{proof}

\section{Boundedness and Compactness of Vectorial Toeplitz operators}\label{Section3}
 In this section, we prove Theorem \ref{Thm1.1} and Theorem \ref{Thm1.2}, to characterize boundedness and compactness of the vectorial Toeplitz operator $T_{G}: F^{2}_{\phi}(\mathcal{H})\to F^{2}_{\phi}(\mathcal{H})$.
\begin{proof}[Proof of Theorem \ref{Thm1.1}] 
First, we prove that (ii) implies (iii). By using  \eqref{2.10}, we obtain
\begin{align*}
    \hat{G}_{\delta}(z)&\simeq  \rho(z)^{-2n}\int_{D^{\delta}(z)}\|G(w)\|_{\mathcal{L}(\mathcal{H})}dA(w)\\
    & \simeq \int_{D^{\delta}(z)} |k_{z}(w)|^{2} e^{-2\phi(w)} \|G(w)\|_{\mathcal{L}(\mathcal{H})}dA(w)\\
    &\leq\Tilde{G}(z),
\end{align*}
 which gives (iii). Moreover,
\begin{equation}\label{ineq3.1}
    \|\hat{G}_{\delta}\|_{L^{\infty}(\mathbb{C}^{n},dA)}\lesssim \|\Tilde{G}\|_{L^{\infty}(\mathbb{C}^{n},dA)}
\end{equation}
It is obvious that (iii) implies (iv) and 
\begin{equation}\label{ineq3.2}
\|\lbrace\hat{G}_{\delta}(z_k)\rbrace_k\|_{\ell^{\infty}}\lesssim\|\hat{G}_{\delta}\|_{L^{\infty}(\mathbb{C}^{n},dA)}.
\end{equation}
Now, let us show that (iv) implies (ii). Note that
\begin{align}\label{G-tilde-s}
    \Tilde{G}(z)&=\int_{\mathbb{C}^{n}} |k_{z}(w)|^{2} e^{-2\phi(w)} \|G(w)\|_{\mathcal{L}(\mathcal{H})} dA(w)\nonumber\\
    & \leq\sum_{k=1}^{\infty}\int_{D^{\delta}(z_{k})}|k_{z}(w)|^{2} e^{-2\phi(w)} \|G(w)\|_{\mathcal{L}(\mathcal{H})} dA(w)\nonumber\\
    &\lesssim \sum_{k=1}^{\infty} \hat{G}_{\delta}(z_k) \rho(z_k)^{2n}  \sup_{w\in D^{\delta}(z_{k})} |k_{z}(w)|^{2} e^{-2\phi(w)}.
\end{align}
By Lemma \ref{Lemma2.4-HichamToeplitz} and  taking a real number 
$m>1$, as well as (\ref
{2.6}), we obtain 
\begin{align*}
    \Tilde{G}(z)
    &\lesssim \sum_{k=1}^{\infty} \hat{G}_{\delta}(z_{k}) \int_{D^{m\delta}(z_{k})}|k_{z}(w)|^{2}e^{-2\phi(w)}dA(w) \\
    &\leq \sup_{k} \hat{G}_{\delta}(z_{k})  \sum_{k=1}^{\infty}\int_{D^{m\delta}(z_{k})}|k_{z}(w)|^{2}e^{-2\phi(w)}dA(w) \\
    & \leq N\sup_{k} \hat{G}_{\delta}(z_{k})  \|k_{z}\|^{2}_{F^{2}_{\phi}(\C^{n})}\\
    &\lesssim 
      \sup_{k} \hat{G}_{\delta}(z_{k}).
\end{align*}
Therefore,
 \begin{equation}\label{ineq3.4}
     \|\Tilde{G} \|_{L^{\infty}(\mathbb{C}^{n},dA)}\lesssim 
     \|\{ \hat{G}_{\delta}(z_{k}) \}_{k}\|_{l^{\infty}}.
 \end{equation}
Thus, (ii), (iii) and (iv) are all equivalent.
 
Next we show that (i) implies (ii). Let $T_{G}:F^{2}_{\phi}(\mathcal{H})\to F^{2}_{\phi}(\mathcal{H})$ be bounded. Since $G(w)$ is positive for every $w\in\C^{n}$, $\|G(w)\|_{\mathcal{L}(\mathcal{H})}=\sup_{\|e\|=1}\langle G(w)e,e\rangle_{\mathcal{H}}$. Applying \eqref{2.8}, Lemma \ref{WeakConvergenceNormalizedKernel}, and Lemma \ref{VectorValued-Lemma2.4Hicham}, we obtain  
\begin{equation}\label{compactTG}
\begin{split}
    \Tilde{G}(z)&=\int_{\mathbb{C}^{n}}|k_{z}(w)|^{2}e^{-2\phi(w)} \|G(w)\|_{\mathcal{L}(\mathcal{H})}dA(w)\\
    &\simeq 
    \rho(z)^{n}e^{-\phi(z)}\int_{\mathbb{C}^{n}}k_{z}(w)K(z,w)e^{-2\phi(w)} \|G(w)\|_{\mathcal{L}(\mathcal{H})}dA(w)\\
     &\simeq\rho(z)^{n}e^{-\phi(z)} \int_{\mathbb{C}^{n}} \sup_{\|e\|=1}\langle G(w)k_{z}(w)e,e  \rangle_{\mathcal{H}} K(z,w)e^{-2\phi(w)} dA(w)\\
&\simeq \rho(z)^{n} e^{-\phi(z)}\sup_{\|e\|=1}  \langle T_{G}k_{z}(z)e,e  
    \rangle_{\mathcal{H}} \\
    &\leq\rho(z)^{n} e^{-\phi(z)} \|T_{G}(k_{z}(z)e)\|_{\mathcal{H}}\\
    &\lesssim \rho(z)^{n}\left(\frac{1}{\rho(z)^{2n}}\int_{\mathbb{C}^n}\|T_{G}k_{z}(\zeta)e\|_{\mathcal{H}}^{2}e^{-2\phi(\zeta)} dA(\zeta) \right)^{1/2}\\
    &\lesssim \|T_{G}k_{z}(\cdot)e\|_{2,\varphi}\lesssim \|T_{G}\|. 
\end{split}    
\end{equation}  

To show that (iii) implies (i), we claim that there is a constant $C>0$ such that
\begin{equation}\label{star}
    \|T_{G}f\|^{2}_{2,\phi}\leq C\int_{\mathbb{C}^{n}} \|f(w)\|^{2}_{\mathcal{H}} e^{-2\phi(w)}\hat{G}_{\delta}(w)^{2}dA(w),
\end{equation}
for any $f\in F^{2}_{\phi}(\mathcal{H})$ and any $\delta>0.$  Then \eqref{star} and \eqref{VectorLemma2.4-Hicham} imply that
\begin{align*}\label{P2.3}
\|T_{G}f\|^{2}_{2,\phi}&
    \lesssim \int_{\mathbb{C}^{n}}  \hat{G}_{\delta}(w)^{2}
   \|f(w)\|^{2}_{\mathcal{H}} e^{-2\phi(w)}dA(w)\nonumber\\
   & \leq \|\hat{G}_{\delta}\|^{2}_{L^{\infty}(\mathbb{C}^{n},dA)} \|f\|^{2}_{2,\phi}.
\end{align*}
Hence, $T_{G}: F^{2}_{\phi}(\mathcal{H})\to F^{2}_{\phi}(\mathcal{H})$ is bounded and 
\begin{equation}\label{ineq3.7}
    \|T_{G}\| \lesssim \|\hat{G}_{\delta}\|_{L^{\infty}(\mathbb{C}^{n},dA)}.
\end{equation}
To finish the proof, we should justify the inequality \eqref{star}. Let $f\in F^{2}_{\phi}(\mathcal{H})$ and $z\in \mathbb{C}^{n}$. Take $r>0$ such that $\beta^{2} r\leq \delta$, where $\beta$ is as in (\ref{2.5}), and let $\{z_{k}\}_{k}$ be an $r$-lattice. Then

\begin{align*}
    \|T_{G}(f)(z)\|_{\mathcal{H}}&=\left\|
    \int_{\mathbb{C}^{n}} G(w)f(w)K(z,w) e^{-2\phi(w)}dA(w)
    \right\|_{\mathcal{H}}\\
    &\leq  \int_{\mathbb{C}^{n}} \|G(w)\|_{\mathcal{L}(\mathcal{H})}\|f(w)\|_{\mathcal{H}}|K(z,w)| e^{-2\phi(w)}dA(w)\\
    &\leq \sum_{k=1}^{\infty} \int_{D^{r}(z_{k})} \|G(w)\|_{\mathcal{L}(\mathcal{H})}\|f(w)\|_{\mathcal{H}}|K(z,w)| e^{-2\phi(w)}dA(w)\\
     &\lesssim \sum_{k=1}^{\infty}\hat{G}_{r}(z_{k})\rho(z_{k})^{2n}\left( \sup_{w\in D^{r}(z_{k})}\|f(w)\|_{\mathcal{H}}|K(z,w)| e^{-2\phi(w)}\right).
\end{align*}
Applying \eqref{VectorLemma2.4-Hicham}, since the function $fK_z\in H(\mathbb{C}^n, \mathcal{H}),$  using $\hat{G}_{r}(z_{k})\lesssim \hat{G}_{\beta^{2}r}(w),$ for any $w\in D^{\beta r}(z_{k}),$ and \eqref{2.2}, we obtain
\begin{align}\label{3.11}
    \|T_{G}(f)(z)\|_{\mathcal{H}}
    &\lesssim 
   \sum_{k=1}^{\infty}\hat{G}_{r}(z_{k})
   \int_{D^{\beta r}(z_{k})}\|f(w)\|_{\mathcal{H}}|K(z,w)| e^{-2\phi(w)}dA(w)\nonumber\\
   &\lesssim  N \int_{\mathbb{C}^{n}} \hat{G}_{\beta^{2}r}(w)
\|f(w)\|_{\mathcal{H}}|K(z,w)| e^{-2\phi(w)}dA(w)\nonumber\\
   &\lesssim \int_{\mathbb{C}^{n}} \hat{G}_{\delta}(w)
   \|f(w)\|_{\mathcal{H}}|K(z,w)| e^{-2\phi(w)}dA(w).
\end{align}
  Now, applying H\"older's inequality and \eqref{2.8}, we get
\begin{align*}\label{q1}
\|T_{G}f(z)\|_{\mathcal{H}}^{2}e^{-2\phi(z)}
   &\lesssim
   \left(\int_{\mathbb{C}^{n}} \hat{G}_{\delta}(w)
   \|f(w)\|_{\mathcal{H}}|K(z,w)| e^{-2\phi(w)}e^{-\phi(z)}dA(w) \right)^{2}\\
   &\lesssim
    \Bigg[
    \left(\int_{\mathbb{C}^{n}}  \hat{G}_{\delta}(w)
   \|f(w)\|_{\mathcal{H}}|K(z,w)|^{1/2} e^{-\phi(w)}e^{-\phi(w)/2}e^{-\phi(z)/2}\right)\\
   &\quad \quad \left(|K(z,w)|^{1/2}e^{-\phi(w)/2}e^{-\phi(z)/2} \right) dA(w)\Bigg]^{2}\\
    & \leq  \left(\int_{\mathbb{C}^{n}}  \hat{G}^{2}_{\delta}(w)
   \|f(w)\|^{2}_{\mathcal{H}}|K(z,w)| e^{-2\phi(w)}e^{-\phi(w)}e^{-\phi(z)}dA(w)\right)\\
    &\quad \quad \left(\int_{\mathbb{C}^{n}}|K(z,w)|e^{-\phi(w)}e^{-\phi(z)} dA(w)\right)\\ 
    &\lesssim \int_{\mathbb{C}^{n}}  \hat{G}^{2}_{\delta}(w)
   \|f(w)\|^{2}_{\mathcal{H}}|K(z,w)| e^{-2\phi(w)}e^{-\phi(w)}e^{-\phi(z)}dA(w).   
\end{align*}
Then, by Fubini's theorem and  using again \eqref{2.8}, we have 
\begin{align*}
    &\|T_{G}f\|^{2}_{2,\phi}=\int_{\mathbb{C}^{n}}\|T_{G}f(z)\|_{\mathcal{H}}^{2}e^{-2\phi(z)} dA(z)\\
    &\lesssim \int_{\mathbb{C}^{n}} \int_{\mathbb{C}^{n}}  \hat{G}^{2}_{\delta}(w)
   \|f(w)\|^{2}_{\mathcal{H}}|K(z,w)| e^{-2\phi(w)}e^{-\phi(w)}e^{-\phi(z)}dA(w)dA(z)\\
   &= \int_{\mathbb{C}^{n}}  \hat{G}^{2}_{\delta}(w)
   \|f(w)\|^{2}_{\mathcal{H}} e^{-2\phi(w)}e^{-\phi(w)}\int_{\mathbb{C}^{n}}|K(z,w)|e^{-\phi(z)}dA(z)dA(w)\\
   &\simeq \int_{\mathbb{C}^{n}}  \hat{G}^{2}_{\delta}(w)
   \|f(w)\|^{2}_{\mathcal{H}} e^{-2\phi(w)}dA(w),
\end{align*}
and we are done. Furthermore, by \eqref{ineq3.1}, \eqref{ineq3.2}, \eqref{ineq3.4}, and \eqref{ineq3.7}, one has \eqref{th11n}. 

To show that (iv) implies (v), note that using Lemma \ref{VectorValued-Lemma2.4Hicham}, \eqref{2.2}, and \eqref{2.6}, we obtain
\begin{equation}\label{finfty}
    \begin{split}
        &\int_{\mathbb{C}^{n}} \|f(z)\|^{2}_{\mathcal{H}}e^{-2\phi(z)}\|G(z)\|_{\mathcal{L}(\mathcal{H})}dA(z)\\
        &\lesssim \int_{\mathbb{C}^{n}}\left(
        \frac{1}{\rho(z)^{2n}}\int_{D^{\frac{\delta}{1+\delta}}(z)}\|f(w)\|^{2}_{\mathcal{H}}e^{-2\phi(w)}dA(w)
        \right) \|G(z)\|_{\mathcal{L}(\mathcal{H})}dA(z)\\
        &\leq \sum_{k=1}^{\infty}\int_{D^{\delta}(z_{k})}\frac{1}{\rho(z)^{2n}}\left(\int_{D^{\frac{\delta}{1+\delta}}(z)}\|f(w)\|^{2}_{\mathcal{H}}e^{-2\phi(w)}dA(w)
        \right)\|G(z)\|_{\mathcal{L}(\mathcal{H})}dA(z)\\
       & \lesssim \sum_{k=1}^{\infty}\frac{1}{\rho(z_{k})^{2n}} \int_{D^{\delta}(z_{k})}\left(\int_{D^{m\delta}(z_{k})}\|f(w)\|^{2}_{\mathcal{H}}e^{-2\phi(w)}dA(w)
        \right) \|G(z)\|_{\mathcal{L}(\mathcal{H})}dA(z)\\
        &=\sum_{k=1}^{\infty} \left(\int_{D^{m\delta}(z_{k})}\|f(w)\|^{2}_{\mathcal{H}}e^{-2\phi(w)}dA(w)
        \right) \hat{G}_{\delta}(z_{k})\\
       & \lesssim \|\{\hat{G}_{\delta}(z_{k})\}_{k}\|_{l^{\infty}}\|f\|^{2}_{2,\phi}.
       \end{split}
    \end{equation}

We finish the proof by showing that (v) implies (ii). Take $f=k_{z}e$ in \eqref{Carleson}. Then $\tilde{G}(z)\lesssim \|k_{z}\|^{2}_{2,\phi}=1$, implying that $\Tilde{G}(z)\in L^{\infty}(\mathbb{C}^{n},dA)$, and we are done with the proof.
\end{proof}

\begin{proof}[Proof of Theorem \ref{Thm1.2}]
One can show that (ii) implies (iii), and (iii) implies (iv) similarly as in the proof of Theorem \ref{Thm1.1}. 

Now, let us show that (iv) implies (ii). Let $\{z_{k}\}_{k}$ be a $\delta$-lattice. Assuming (iv) holds, for every $\epsilon>0$, there is $K\in\mathbb{N}$ such that whenever $k>K$, 
\begin{equation}\label{epsilon}
    \hat{G}_{\delta}(z_{k})<\epsilon.
\end{equation}
Let $m$ be defined as in (\ref{2.4}).
Then
\begin{align*}
    \Tilde{G}(z)
   &= \int_{\mathbb{C}^{n}} |k_{z}(w)|^{2} e^{-2\phi(w)} \|G(w)\|_{\mathcal{L}(\mathcal{H})} dA(w)\\
    &\leq
\int_{\cup_{k=1}^{K}\overline{D^{m\delta}(z_{k})}}|k_{z}(w)|^{2} e^{-2\phi(w)} \|G(w)\|_{\mathcal{L}(\mathcal{H})} dA(w)+\\
&\quad\quad\quad \quad\sum_{k=K+1}^{\infty} \rho(z_{k})^{2n}\hat{G}_{\delta}(z_{k})\left( \sup_{w\in D^{\delta}(z_{k})} |k_{z}(w)|^{2} e^{-2\phi(w)}
\right).
\end{align*}
 On one hand, by Lemma \ref{Lemma2.4-HichamToeplitz}, \eqref{epsilon} and \eqref{2.6}, we obtain
\begin{equation}\label{them12h}
\begin{split}
   &\sum_{k=K+1}^{\infty}\rho(z_{k})^{2n}\hat{G}_{\delta}(z_{k})\left( \sup_{w\in D^{\delta}(z_{k})} |k_{z}(w)|^{2} e^{-2\phi(w)}
\right)\\
&\lesssim  \sum_{k=K+1}^{\infty} \hat{G}_{\delta}(z_{k})  \left( \int_{D^{m\delta}(z_{k})}|k_{z}(w)|^{2} e^{-2\phi(w)}dA(w)
\right)\\
&\lesssim \sup_{k\geq K+1} \hat{G}_{\delta}(z_{k})  \left(\sum_{k=K+1}^{\infty} \int_{D^{m\delta}(z_{k})}|k_{z}(w)|^{2} e^{-2\phi(w)}dA(w)
\right)\\
&\lesssim \epsilon N \|k_{z}\|^{2}_{F^{2}_{\phi}(\mathbb{C}^{n})} \lesssim \epsilon.
\end{split}
\end{equation}
On the other hand, Lemma \ref{lem-Kernel estimates} implies that $k_{z}\to 0$ uniformly on $\cup_{k=1}^{K}\overline{D^{m\delta}(z_{k})}$ as $|z|\to \infty$. Therefore, 
\begin{equation}
    \int_{\cup_{k=1}^{K}\overline{D^{m\delta}(z_{k})}}|k_{z}(w)|^{2} e^{-2\phi(w)} \|G(w)\|_{\mathcal{L}(\mathcal{H})} dA(w)<\epsilon.
\end{equation}
This, together with \eqref{them12h}, proves (ii).

Next, we show that (i) implies (ii). Since $T_{G}$ is compact, Lemma \ref{WeakConvergenceNormalizedKernel} implies that $\|T_{G}k_{z}e\|_{2,\phi}$ converges to zero as $|z|\to\infty$. By \eqref{compactTG}, we have
\begin{align*}
    \Tilde{G}(z)
    \lesssim \|T_{G}k_{z}e\|_{2,\phi}\to 0,
\end{align*}
as $|z|\to \infty$.

To show that (iii) implies (i), let $\epsilon>0$ be arbitrary and $\{f_{j}\}_{j=1}^{\infty}$ be a sequence in $F^{2}_{\phi}(\mathcal{H})$ that converges to zero uniformly on any compact subset of $\mathbb{C}^{n}$. We want to show that for big enough $j\in\mathbb{N}$,
\begin{equation}
    \|T_{G}f_{j}\|_{2,\phi}\lesssim\epsilon.
\end{equation}
By our assumption, there is some $R>0$ such that
\begin{equation}\label{P2.4}
    \hat{G}_{\delta}(z) <\sqrt{\epsilon},
\end{equation}
whenever $|z|>R$.
 This together with  \eqref{star} and Lemma \ref{2.6}, we have 
\begin{equation}\label{Toeplitz weak convergence}
  \begin{split}
\|T_{G}f_{j}\|_{2,\phi}^{2}&\lesssim  
  \int_{|z|\leq R}  \hat{G}_{\delta}(w)^{2} \|f_{j}(w)\|^{2}_{\mathcal{H}} e^{-2\phi(w)}dA(w)\\
  &\quad\quad\quad\quad
  +\int_{|z|>R}  \hat{G}_{\delta}(w)^{2}  \|f_{j}(w)\|^{2}_{\mathcal{H}} e^{-2\phi(w)}dA(w)\\
  &\lesssim \epsilon + \epsilon \|f_{j}\|^{2}_{2,\phi}
  \lesssim \epsilon,
\end{split}
\end{equation}
where to bound the first integral we used the continuity of $\hat{G}_{\delta}(w)$ and the fact that $\{f_{j}\}_{j=1}^{\infty}$ converges to zero uniformly on any compact $\{|z|\leq R\}.$

To show that (iv) implies (v), let $\{f_k\}_{k}$ be a bounded sequence in $F^{2}_{\phi}(\mathcal{H})$ that converges to zero uniformly on compact subsets of $\mathbb{C}^n.$ By our assumption, letting $\varepsilon>0,$ there exists $r_0>0$ such that 
\begin{equation}
    \sup_{|z_k|> r_o}\hat{G}_{\delta}(z_{k}) <\varepsilon.
\end{equation}
Observe that there is $r_0 \le r_1$ such that if a point $z_k$ of the sequence $\{z_k\}_k$ belongs to
$\{|z| \le r_0\}$, then $D
^\delta(z_k) \subset \{|z| \le r_1\}$. So take $k$ big enough such that
$$
\sup_{ \{|z| \le r_1\}}\|f_k(z)\|_{\mathcal{H}}<\varepsilon.
$$
This together with \eqref{2.6} and \eqref{finfty}, we obtain
\begin{align*}
&\int_{\mathbb{C}^{n}} \|f_k(z)\|^{2}_{\mathcal{H}}e^{-2\phi(z)}\|G(z)\|_{\mathcal{L}(\mathcal{H})}dA(z) \\
&\lesssim \int_{\{|z| \le r_1\}} \|f_k(z)\|^{2}_{\mathcal{H}}e^{-2\phi(z)}\|G(z)\|_{\mathcal{L}(\mathcal{H})}dA(z)\\
&+\sup_{|z_k|> r_o}\hat{G}_{\delta}(z_{k}) \sum_{|z_k|>r_0} \int_{D^{m\delta}(z_{k})}\|f_k(w)\|^{2}_{\mathcal{H}}e^{-2\phi(w)}\|G(z)\|_{\mathcal{L}(\mathcal{H})}dA(w)
        \\
&\lesssim\varepsilon + \varepsilon\|f_k\|^{2}_{2,\phi}\lesssim \varepsilon,
\end{align*}
because $\{f_k\}_{k}$ belongs to  $F^{2}_{\phi}(\mathcal{H}).$ This implies that G satisfies a vanishing Carleson condition.

To finish the proof, we show that (v) implies (ii). Assuming (v), $I_{G}:F^{2}_{\phi}(\mathcal{H})\to L^{2}_{\phi}(\mathcal{H},\|G\|_{\mathcal{L}(\mathcal{H})}dA)$ is compact. Since $\{k_{z}e:z\in \mathbb{C}^{n}\}$ is bounded in $F^{2}_{\phi}(\mathcal{H})$ and $k_{z}e\to 0$ uniformly on compact subsets of $\mathbb{C}^{n}$, using (\ref{compact-Carleson}), we have
\begin{equation*}
    \int_{\mathbb{C}^{n}} |k_{z}(w)|^{2}e^{-2\phi(w)}\|G(w)\|_{\mathcal{L}(\mathcal{H})}dA(w)\to 0, \quad\textrm{ as } |z|\to \infty,
\end{equation*}
and therefore (ii) holds. This proves the desired result and completes the proof.
\end{proof}

\section{Schatten Class membership of Toeplitz operator}\label{Section4}
Here we give a proof of Theorem \ref{thm1.7}, characterizing the $p$-Schatten class membership of the vectorial Toeplitz operator $T_{G}$ acting on the Hilbert space $F^{2}_{\phi}(\mathcal{H})$. Before going through the proof, notice the following lemmas regarding an arbitrary $\delta$-lattice that are going to be useful in the proof of Theorem \ref{thm1.7}.
\begin{lem}\label{finitelattice1}
For $R>0$ and any finite sequence $\{z_{j}\}_{j=1}^{n}$ of different points in $\C^{n}$, let 
\begin{equation*}
    M_{R}(\{z_{j}\}_{j=1}^{n}):=\max_{1\leq j\leq n}\# \{k\in \{1,\cdots,n\}: |z_{j}-z_{k}|<R \min (\rho(z_{j},\rho(z_{k})))\}.
\end{equation*}
Then $\{z_{j}\}_{j=1}^{n}$ can be partitioned into at most $ M_{R}(\{z_{j}\}_{j=1}^{n})$ subsequences such that any two different points $z_{j}$ and $z_{k}$ in the same subsequence satisfy either $z_{j}\notin D^{R}(z_{k})$, or $z_{k}\notin D^{R}(z_{j})$. That is, $|z_{j}-z_{k}|\geq R \min (\rho(z_{j},\rho(z_{k})))$.
\end{lem}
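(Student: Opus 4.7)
The plan is to recast this as a graph-coloring problem. I would define a graph $\Gamma$ on the vertex set $\{z_1,\dots,z_n\}$ by placing an edge between distinct $z_j$ and $z_k$ precisely when $|z_j-z_k| < R\min(\rho(z_j),\rho(z_k))$. This relation is symmetric in the pair $(j,k)$, so $\Gamma$ is a well-defined simple undirected graph. Any proper vertex coloring of $\Gamma$ yields a partition of $\{z_j\}_{j=1}^n$ into color classes such that any two distinct points in the same class are non-adjacent, i.e., satisfy $|z_j-z_k| \geq R\min(\rho(z_j),\rho(z_k))$, which is exactly the condition that either $z_j \notin D^R(z_k)$ or $z_k\notin D^R(z_j)$. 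Thus it suffices to produce a proper coloring of $\Gamma$ using at most $M_R(\{z_j\}_{j=1}^n)$ colors.

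Next, I would read off a bound on the maximum degree of $\Gamma$. For each fixed $j$, the set $\{k : |z_j - z_k| < R\min(\rho(z_j),\rho(z_k))\}$ contains $k=j$ (since $0 < R\rho(z_j)$) and has cardinality at most $M_R(\{z_j\}_{j=1}^n)$ by definition. Removing $k=j$ shows that the degree of $z_j$ in $\Gamma$ is at most $M_R(\{z_j\}_{j=1}^n)-1$, so $\Gamma$ has maximum degree bounded by $M_R(\{z_j\}_{j=1}^n)-1$.

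To finish, I would invoke the standard greedy coloring argument: enumerate the vertices in any order and, processing them one at a time, assign to each $z_j$ the smallest positive integer not already used on a previously colored neighbor. Since each vertex has at most $M_R(\{z_j\}_{j=1}^n)-1$ neighbors, at every step at least one color in $\{1,2,\dots,M_R(\{z_j\}_{j=1}^n)\}$ remains available, and the resulting color classes form the desired partition into at most $M_R(\{z_j\}_{j=1}^n)$ subsequences.

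No significant obstacle arises; the only two points that require care are the symmetry of the defining relation, which is what makes the conflict graph genuinely undirected and hence amenable to greedy coloring, and the bookkeeping observation that the count in the definition of $M_R$ includes the index $k=j$ itself, so that the bound on the maximum degree is $M_R-1$ rather than $M_R$.
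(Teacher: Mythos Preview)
Your argument is correct: recasting the problem as a proper vertex coloring of the conflict graph and applying the greedy bound $\chi(\Gamma)\le \Delta(\Gamma)+1$, together with the observation that $k=j$ is always counted in $M_R$, is exactly the standard proof. The paper itself does not supply an argument but refers to Lemma~6.8 in \cite{Oliverdoubling}, whose proof is precisely this greedy-coloring approach (originating with Luecking), so your proposal coincides with the intended one.
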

  \begin{proof}
      The proof is identical to the doubling Fock spaces of the complex plane, as done in Lemma 6.8 in \cite{Oliverdoubling}.
  \end{proof}
  \begin{lem}\label{finitelyManySubsequences}
      Let $\delta>0$, $R>1$, and $\{z_{j}\}_{j\geq 1}$ be a $\delta$-lattice. Then $M_{R}(\{z_{j}\}_{j=1}^{m})\leq 6^{2n}R^{4n}\delta^{-2n}N_{\delta}$, for every finite sublattice $\{z_{j}\}_{j=1}^{m}$, where $N_{\delta}=\sup_{z\in\C^{n}}\sum_{j=1}^{\infty}\chi_{D^{\delta}(z_{j})}(z)$, as in (\ref{2.6}). 
  \end{lem}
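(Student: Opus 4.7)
The plan is a volume-counting argument that combines the Lipschitz property \eqref{Lipschitz} of the radius function with the finite overlap bound \eqref{2.6}. Fix $j\in\{1,\dots,m\}$ and set
\[
A_{j}=\bigl\{k\in\{1,\dots,m\}\,:\, |z_{j}-z_{k}|<R\min(\rho(z_{j}),\rho(z_{k}))\bigr\}.
\]
Since $M_{R}(\{z_{j}\}_{j=1}^{m})=\max_{j}\#A_{j}$, it suffices to bound $\#A_{j}$ by $6^{2n}R^{4n}\delta^{-2n}N_{\delta}$ uniformly in $j$.

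First, I would use \eqref{Lipschitz} twice on each $k\in A_{j}$: from $|z_{j}-z_{k}|<R\rho(z_{j})$ one gets $\rho(z_{k})<(1+R)\rho(z_{j})$, and from $|z_{j}-z_{k}|<R\rho(z_{k})$ one gets $\rho(z_{j})<(1+R)\rho(z_{k})$. Since $R>1$ we may replace $1+R$ by $2R$ to obtain the clean two-sided estimate
\[
\frac{\rho(z_{j})}{2R}<\rho(z_{k})<2R\,\rho(z_{j}),\qquad k\in A_{j}.
\]

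Next, I would observe that for any $k\in A_{j}$ and any $w\in D^{\delta}(z_{k})$, the triangle inequality gives
\[
|w-z_{j}|\le \delta\rho(z_{k})+|z_{k}-z_{j}|<2R\delta\rho(z_{j})+R\rho(z_{j})=R(1+2\delta)\rho(z_{j})\le 3R\rho(z_{j}),
\]
where in the last step I use that $\delta$-lattices are only considered for $\delta\in(0,1]$. Hence $D^{\delta}(z_{k})\subset D(z_{j},3R\rho(z_{j}))$ for every $k\in A_{j}$. Integrating the overlap inequality \eqref{2.6} (with $m=1$) over this Euclidean disk then yields
\[
\sum_{k\in A_{j}}|D^{\delta}(z_{k})|\le N_{\delta}\bigl|D(z_{j},3R\rho(z_{j}))\bigr|.
\]

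Finally, I would expand the Lebesgue volumes as $|D(z,s)|=c_{n}s^{2n}$, apply the lower bound $\rho(z_{k})>\rho(z_{j})/(2R)$ on the left-hand side, cancel the common factors of $c_{n}\rho(z_{j})^{2n}$, and solve for $\#A_{j}$ to arrive at
\[
\#A_{j}\le N_{\delta}\cdot\frac{(3R)^{2n}(2R)^{2n}}{\delta^{2n}}=6^{2n}R^{4n}\delta^{-2n}N_{\delta},
\]
which is uniform in $j$ and gives the statement. I do not anticipate a genuine obstacle: the Lipschitz bound, the overlap constant, and Euclidean volume comparison are all recorded earlier in the paper. The only point requiring care is checking that the numerical constants combine exactly as claimed, in particular that the hypothesis $R>1$ is used to absorb $(1+R)^{2n}$ into $(2R)^{2n}$, producing the factor $(3\cdot 2)^{2n}=6^{2n}$ rather than a dimension-dependent constant.
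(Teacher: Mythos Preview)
Your argument is correct and is precisely the standard volume-counting proof that the paper defers to (the paper only writes ``The proof can be done similarly as in Lemma~6.9 in \cite{Oliverdoubling}'' without giving details). The one small point to flag is your use of $1+2\delta\le 3$, which needs $\delta\le 1$; the lemma as stated only assumes $\delta>0$, but in the paper's applications (Theorem~\ref{thm1.7}) one has $\delta<\min(1/2,\alpha)$, so this is harmless.
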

  \begin{proof}
      The proof can be done similarly as in Lemma 6.9 in \cite{Oliverdoubling}.
  \end{proof}

\begin{proof}[Proof of Theorem \ref{thm1.7}] 
    Let $\{e_m^{z}\}_{m\geq 1}$ be any orthonormal basis of $\mathcal{H}$, depending on $z\in\C^{n}$. Notice that since $G(z)$ is a positive operator on $\mathcal{H}$, so are $\hat{G}^{op}_{\delta}(z)$ and $\tilde{G}^{op}(z)$. Moreover, $T_{G}$ is also a positive operator acting on $F^{2}_{\phi}(\mathcal{H})$.  We split the proof into two cases: For the first case, we choose $p\geq 1.$
    First, we show that (i) implies (ii). By the definition of the operator-valued Berezin transform, applying Proposition 1.31 in \cite{ZhuOperatorTheory}, Lemma \ref{lem-Kernel estimates}, and Lemma 5.1 in \cite{bigHankelBommierConstantine}, we obtain 
    \begin{equation}\label{tracecomputation}
    \begin{split}
&\int_{\mathbb{C}^n}\sum_{m=1}^{\infty}\Big( \langle \widetilde{G}^{op}(z)e_m^{z},e_m^{z}\rangle_{\mathcal{H}} \Big)^p\,\frac{dA(z)}{\rho(z)^{2n}}\\
     &= \int_{\mathbb{C}^n}\sum_{m=1}^{\infty}\Big( \int_{\mathbb{C}^n}\langle G(\zeta)e_m^{z},e_m^{z}\rangle_{\mathcal{H}} |k_{z}(\zeta)|^2\,e^{-2\phi(\zeta)} dA(\zeta)\Big)
     ^p\,\frac{dA(z)}{\rho(z)^{2n}}\\
      &= \int_{\mathbb{C}^n}\sum_{m=1}^{\infty}\Big( \int_{\mathbb{C}^n}\langle G(\zeta)k_{z}(\zeta)e_m^{z},k_{z}(\zeta)e_m^{z}\rangle_{\mathcal{H}} \,e^{-2\phi(\zeta)} dA(\zeta)\Big)
     ^p\,\frac{dA(z)}{\rho(z)^{2n}}\\
     &= \int_{\mathbb{C}^n}\sum_{m=1}^{\infty}\Big( \int_{\mathbb{C}^n}\langle T_{G}k_{z}(\zeta)e_m^{z},k_{z}(\zeta)e_m^{z}\rangle_{\mathcal{H}} \,e^{-2\phi(\zeta)} dA(\zeta)\Big)
     ^p\,\frac{dA(z)}{\rho(z)^{2n}}\\
&=\int_{\mathbb{C}^n}\sum_{m=1}^{\infty}\Big( \langle T_{G}k_{z}e_m^{z},k_{z}e_m^{z}\rangle \Big)
     ^p\,\frac{dA(z)}{\rho(z)^{2n}}\\
     &\leq \int_{\mathbb{C}^n}\sum_{m=1}^{\infty} \langle T_{G}^{p}k_{z}e_m^{z},k_{z}e_m^{z}\rangle \frac{dA(z)}{\rho(z)^{2n}}\\
     &\simeq \int_{\mathbb{C}^n}\sum_{m=1}^{\infty} \langle T_{G}^{p}K_{z}e_m^{z},K_{z}e_m^{z}\rangle e^{-2\phi(z)}dA(z)\\
     &=\Tr(T^{p}_{G})<\infty.
     \end{split}
    \end{equation}
    
 Now we show that (ii) implies (iii). Indeed, using (\ref{2.10}),
\begin{align*}
  &\int_{\mathbb{C}^n}\sum_{m=1}^{\infty}\Big(\Big\langle \hat{G}^{op}_\delta(z)e_m^{z},e_m^{z}\Big\rangle_{\mathcal{H}} \Big)^p\,\frac{dA(z)}{\rho(z)^{2n}}\\
  &\simeq   \int_{\mathbb{C}^n} \sum_{m=1}^{\infty}\Big(\Big\langle \int_{D_\delta(z)} G(\zeta)\,e_m^{z} \frac{dA(\zeta)}{\rho(\zeta)^{2n}},e_m^{z}\Big\rangle_{\mathcal{H}} \Big)^p\,\frac{dA(z)}{\rho(z)^{2n}}\\
  &\simeq \int_{\mathbb{C}^n} \sum_{m=1}^{\infty}\Big(\Big\langle \int_{\mathbb{C}^n} G(\zeta)\,e_m^{z} |k_{z}(\zeta)|^2\,e^{-2\phi(\zeta)} dA(\zeta),e_m^{z}\Big\rangle_{\mathcal{H}} \Big)^p\,\frac{dA(z)}{\rho(z)^{2n}}\\
  &\simeq \int_{\mathbb{C}^n} \sum_{m=1}^{\infty}\Big(\Big\langle  \Tilde{G}^{op}(\zeta)\,e_m^{z},e_m^{z}\Big\rangle_{\mathcal{H}} \Big)^p\,\frac{dA(z)}{\rho(z)^{2n}},
\end{align*}
and we can see that (ii) and (iii) are equivalent.  
Moreover, the statement (iii) implies (i). Indeed, let $\{f_{k}\}_{k\geq 1}$ be an orthonomal basis of $F^{2}_{\phi}(\C^{n})$. Then  $\{f_{k}e_{m}^{z}\}_{k,m\geq 1}$ is an orthonomal basis of $F^{2}_{\phi}(\mathcal{H})$. By Lemma \ref{lem2.14} we see that $T_{\hat{G}^{op}_{\delta}}\in S_{p}(F^{2}_{\phi}(\mathcal{H}))$. Recall that for any $\zeta\in D^{\delta}(z)$, there exists some small enough $r>0$ such that $D^{r}(\zeta)\subset D^{\delta}(z)$, coming from the Hausdorff property of $\C^{n}$. Moreover, for any $m,k\geq 1$, applying Fubini's Theorem and Lemma \ref{Lemma2.4-HichamToeplitz}, we get  
\begin{align*}
    \langle T_{\hat{G}^{op}_{\delta}} f_{k}e_{m}^{z},f_{k}e_{m}^{z}\rangle &
    =\int_{\C^{n}} \langle T_{\hat{G}^{op}_{\delta}} f_{k}(z)e_{m}^{z},f_{k}(z)e_{m}^{z}\rangle_{\mathcal{H}}e^{-2\phi(z)}dA(z)\\
    &=\int_{\C^{n}} \langle \hat{G}^{op}_{\delta}(z) f_{k}(z)e_{m}^{z},f_{k}(z)e_{m}^{z}\rangle_{\mathcal{H}}e^{-2\phi(z)}dA(z)\\
    &\simeq \int_{\C^{n}}\frac{1}{\rho(z)^{2n}}\int_{D^{\delta}(z)} \langle G(\zeta) e_{m}^{z},e_{m}^{z}\rangle_{\mathcal{H}}|f_{k}(z)|^{2}e^{-2\phi(z)}dA(\zeta)dA(z)\\
    &\gtrsim \int_{\C^{n}}\frac{1}{\rho(\zeta)^{2n}}\int_{D^{r}(\zeta)} \langle G(\zeta) e_{m}^{z},e_{m}^{z}\rangle_{\mathcal{H}}|f_{k}(z)|^{2}e^{-2\phi(z)}dA(z)dA(\zeta)\\
   & \gtrsim \int_{\C^{n}} \langle G(\zeta) e_{m}^{z},e_{m}^{z}\rangle_{\mathcal{H}}|f_{k}(\zeta)|^{2}e^{-2\phi(\zeta)}dA(\zeta)\\
   & =\int_{\C^{n}} \langle G(\zeta)f_{k}(\zeta) e_{m}^{z},f_{k}(\zeta)e_{m}^{z}\rangle_{\mathcal{H}}e^{-2\phi(\zeta)}dA(\zeta)\\
   &=\langle T_{G}f_{k}e_{m}^{z},f_{k}e_{m}^{z}\rangle.
\end{align*}
Since $T_{\hat{G}^{op}_{\delta}}\in S_{p}(F^{2}_{\phi}(\mathcal{H}))$, by definition $\sum_{k,m=1}^{\infty}(\langle T_{\hat{G}^{op}_{\delta}} f_{k}e_{m}^{z},f_{k}e_{m}^{z}\rangle)^{p}<\infty$, implying that 
\begin{equation*}
\sum_{k,m=1}^{\infty}(\langle T_{G}f_{k}e_{m}^{z},f_{k}e_{m}^{z}\rangle)^{p}<\infty.
\end{equation*}
Thus $T_{G}\in S_{p}(F^{2}_{\phi}(\mathcal{H}))$.

To finish the proof, it remains to show that the statements (iii) and (iv) are equivalent. Suppose that (iii) holds and let $\{z_j\}_{j\geq 1}$ be a $\delta$-lattice. Let $z\in D^{\delta}(z_{j})$. Then by (\ref{2.4}), there is some $c>1$ such that $D^{\delta}(z_{j})\subset D^{c\delta}(z)$. Then by definition it is easy to see that $\hat{G}^{op}_{\delta}(z_{j})\lesssim \hat{G}^{op}_{c\delta}(z)$. Hence, using (\ref{2.2}) and (\ref{2.6}), 
\begin{align*}
\sum_{j,m=1}^{\infty}\left(\langle \hat{G}^{op}_{\delta}(z_j)e_{m}^{z},e_{m}^{z}\rangle_{\mathcal{H}} \right)^{p}&=\sum_{j,m=1}^{\infty}\frac{\rho(z_{j})^{2n}}{\rho(z_{j})^{2n}} \left(\langle \hat{G}^{op}_{\delta}(z_j)e_{m}^{z},e_{m}^{z}\rangle_{\mathcal{H}} \right)^{p}\\
&\simeq\sum_{j,m=1}^{\infty}\int_{D^{\delta}(z_{j})}\left(\langle \hat{G}^{op}_{\delta}(z_j)e_{m}^{z},e_{m}^{z}\rangle_{\mathcal{H}} \right)^{p}\frac{dA(z)}{\rho(z_{j})^{2n}}\\
&\simeq\sum_{j,m=1}^{\infty}\int_{D^{\delta}(z_{j})}\left(\langle \hat{G}^{op}_{c\delta}(z)e_{m}^{z},e_{m}^{z}\rangle_{\mathcal{H}} \right)^{p}\frac{dA(z)}{\rho(z)^{2n}}\\
&=\sum_{m=1}^{\infty}\int_{\C^{n}}\sum_{j=1}^{\infty}\chi_{D^{\delta}(z_{j})}(z)\left(\langle \hat{G}^{op}_{c\delta}(z)e_{m}^{z},e_{m}^{z}\rangle_{\mathcal{H}} \right)^{p}\frac{dA(z)}{\rho(z)^{2n}}\\
&\lesssim \sum_{m=1}^{\infty}\int_{\C^{n}}\left(\langle \hat{G}^{op}_{c\delta}(z)e_{m}^{z},e_{m}^{z}\rangle_{\mathcal{H}} \right)^{p}\frac{dA(z)}{\rho(z)^{2n}}.
\end{align*}
Conversely, since $\rho(z_j)\simeq \rho(z)$ for any $z\in D^\delta(z_j),$ and $D^{\delta}(z)\subset D^{c\delta}(z_{j})$ for some $c>1$, we have
\begin{align*}
&\int_{\C^{n}}\sum_{m=1}^{\infty}\left(\langle \hat{G}^{op}_{\delta}(z)e_{m}^{z},e_{m}^{z}\rangle_{\mathcal{H}} \right)^{p}\frac{dA(z)}{\rho(z)^{2n}}\\
 &\simeq\int_{\C^{n}}\sum_{m=1}^{\infty}\left(\frac{1}{\rho(z)^{2n}}\int_{D^{\delta}(z)}\langle G(\zeta)e_{m}^{z},e_{m}^{z}\rangle_{\mathcal{H}}dA(\zeta) \right)^{p}\frac{dA(z)}{\rho(z)^{2n}}\\
 &\lesssim \sum_{j=1}^{\infty}\int_{D^{\delta}(z_{j})}\sum_{m=1}^{\infty}\left(\frac{1}{\rho(z_j)^{2n}}\int_{D^{\delta}(z)}\langle G(\zeta)e_{m}^{z},e_{m}^{z}\rangle_{\mathcal{H}}dA(\zeta) \right)^{p}\frac{dA(z)}{\rho(z_j)^{2n}}\\
 &\leq \sum_{j,m=1}^{\infty}\int_{D^{\delta}(z_{j})}\left(\frac{1}{\rho(z_j)^{2n}}\int_{D^{c\delta}(z_j)}\langle G(\zeta)e_{m}^{z},e_{m}^{z}\rangle_{\mathcal{H}}dA(\zeta) \right)^{p}\frac{dA(z)}{\rho(z_j)^{2n}}\\
 &\simeq \sum_{j,m=1}^{\infty}\left(\langle \hat{G}^{op}_{c\delta}(z_j)e_{m}^{z},e_{m}^{z}\rangle_{\mathcal{H}} \right)^{p}.
\end{align*}
This completes the proof of the first case for any choice of orthonormal basis $\{e_{m}^{z}\}_{z\geq 1}$.

Now, we start the second case where $0<p\le 1.$ Similar to the previous case, we can see that (ii) implies (iii) and (iii) implies (iv). Now, we prove that (iv) implies (ii). First note that by Lemma \ref{Lemma2.4-HichamToeplitz}, (\ref{2.4}), and Fubini's Theorem, there is some $c>1$ such that
\begin{align*}
\langle \Tilde{G}^{op}(z)e_m^{z},e_m^{z}\rangle_{\mathcal{H}}& = \int_{\mathbb{C}^n}\langle G(\zeta)e_m^{z},e_m^{z}\rangle_{\mathcal{H}}|k_{z}(\zeta)|^2 e^{-2\phi(\zeta)} dA(\zeta)\\
   &\lesssim \int_{\mathbb{C}^n}\langle G(\zeta)e_m^{z},e_m^{z}\rangle_{\mathcal{H}}\left(\int_{D^\delta(\zeta)} |k_{z}(\xi)|^2 e^{-2\phi(\xi)} \frac{dA(\xi)}{\rho(\zeta)^{2n}}\right) dA(\zeta)\\
   &\lesssim \int_{\mathbb{C}^n}\left(\langle \int_{D^{c\delta}(\xi)} G(\zeta)e_m^{z} \frac{dA(\zeta)}{\rho(\zeta)^{2n}},e_m^{z}\rangle_{\mathcal{H}}\right) |k_{z}(\xi)|^2 e^{-2\phi(\xi)} dA(\xi)  \\
    &\simeq \int_{\mathbb{C}^n}\langle  \hat{G}^{op}_{c\delta}(\xi)e_m^{z} ,e_m^{z}\rangle_{\mathcal{H}} |k_{z}(\xi)|^2 e^{-2\phi(\xi)} dA(\xi) .
\end{align*}
Since $p\le 1$, $(x+y)^{p}\leq x^{p}+y^{p}$. Moreover, whenever $\xi \in D^{\delta}(z_{j})$, $D^{c\delta
}(\xi)\subset D^{r}(z_{j})$ for some large enough $r>0$ and $\rho(\xi)\simeq \rho(z_{j})$. Hence, it is easy to see that $\hat{G}^{op}_{c\delta}(\xi)\lesssim \hat{G}^{op}_{r}(z_{j})$. Thus we obtain the following. 
\begin{align*}
   &\int_{\mathbb{C}^n}\sum_{m=1}^{\infty} \left(\langle \Tilde{G}^{op}(z)e_m^{z},e_m^{z}\rangle_{\mathcal{H}} \right)^p dA(z)\\
   &\lesssim \int_{\mathbb{C}^n}\sum_{m=1}^{\infty}\left(\int_{\mathbb{C}^n}\langle  \hat{G}^{op}_{c\delta}(\xi)e_m^{z} ,e_m^{z}\rangle_{\mathcal{H}} |k_{z}(\xi)|^2 e^{-2\phi(\xi)} dA(\xi)\right)^p\frac{dA(z)}{\rho(z)^{2n}}\\
   &\leq \sum_{j,m=1}^{\infty}\int_{\C^{n}} \left(\int_{D^{\delta}(z_{j})}\langle  \hat{G}^{op}_{c\delta}(\xi)e_m^{z} ,e_m^{z}\rangle_{\mathcal{H}} |k_{z}(\xi)|^2 e^{-2\phi(\xi)} dA(\xi)\right)^p\frac{dA(z)}{\rho(z)^{2n}}\\
    &\leq \sum_{j,m=1}^{\infty}\left(\langle  \hat{G}^{op}_{r}(z_{j})e_m^{z} ,e_m^{z}\rangle_{\mathcal{H}}\right)^{p}\int_{\C^{n}} \left(\sup_{\xi\in D^{\delta}(z_{j})} |k_{z}(\xi)|^2 e^{-2\phi(\xi)} \rho(z_{j})^{2n}\right)^p\frac{dA(z)}{\rho(z)^{2n}}.
\end{align*}
Using the fact that  
$
|k_{z}(\xi)|^2\,e^{-2\phi(\xi)}\rho(\xi)^{2n} \lesssim \exp\left(-\varepsilon d_\rho(z,\xi)\right),
$ for any $\varepsilon >0$, and Lemma 2 in \cite{lv2017bergman}, we get
\begin{align*}
\int_{\mathbb{C}^n}\sum_{m=1}^{\infty} \left(\langle \Tilde{G}^{op}(z)e_m^{z},e_m^{z}\rangle_{\mathcal{H}} \right)^p dA(z)
 &\lesssim   \sum_{j,m=1}^{\infty}\left(\langle  \hat{G}^{op}_{r}(z_{j})e_m^{z} ,e_m^{z}\rangle_{\mathcal{H}}\right)^{p} \int_{\C^{n}}e^{-p\epsilon d_{\rho}(z,z_{j})}\frac{dA(z)}{\rho(z)^{2n}}\\
&\lesssim\sum_{j,m=1}^{\infty}\left(\langle  \hat{G}^{op}_{r}(z_{j})e_m^{z} ,e_m^{z}\rangle_{\mathcal{H}}\right)^{p} \rho(z_{j})^{2n-2n}
 \simeq \sum_{j,m=1}^{\infty}\left(\langle  \hat{G}^{op}_{r}(z_{j})e_m^{z} ,e_m^{z}\rangle_{\mathcal{H}}\right)^{p}.
\end{align*}

We have just proved that (ii), (iii), and (iv) are equivalent. Now, it remains to prove that (ii) implies (i), and (i) implies (iv).
Suppose that (ii) is true. We know that $T_G\in S_p$ if and only if $T^p_G\in S_1.$ By Lemma 5.1 in \cite{bigHankelBommierConstantine} and (\ref{tracecomputation}), this is equivalent to 
$$\Tr(T^p_G)\simeq \int_{\mathbb{C}^n}\sum_{m=1}^{\infty}\langle T^p_Gk_{z}e_m^{z}, k_{z}e_m^{z}\rangle\, \frac{dA(z)}{\rho(z)^{2n}}<+\infty.$$
By Proposition 1.31 in \cite{ZhuOperatorTheory} for when $p\leq 1$, and since positivity of $G(z)$ implies positivity of $T_{G}$, we get 
$$ \langle T^p_Gk_{z}e_m, k_{z}e_m\rangle\lesssim \langle T_Gk_{z}e_m, k_{z}e_m\rangle^p.$$
This together with (\ref{tracecomputation}) implies that
\begin{align*}
  \Tr(T^p_G)\lesssim \int_{\mathbb{C}^n}\sum_{m=1}^{\infty}\left(\langle T_Gk_{z}e_m^{z}, k_{z}e_m^{z}\rangle\right)^{p} \frac{dA(z)}{\rho(z)^{2n}}
=\int_{\mathbb{C}^n}\sum_{m=1}^{\infty}\Big( \langle \widetilde{G}^{op}(z)e_m^{z},e_m^{z}\rangle_{\mathcal{H}} \Big)^p\,\frac{dA(z)}{\rho(z)^{2n}}<\infty,
\end{align*}
and we are done. 

Finally, we finish the proof by showing that (i) implies (iv). Notice that we have not yet made any specific assumption on the basis $\{e_{m}^{z}\}_{m\geq 1}$, and everything we have done so far during the proof regards any generic basis of $\mathcal{H}$. But now we will see how we are forced to restrict ourselves to a specific basis of $\mathcal{H}$. Note that the idea of the proof originally comes from the work of Luecking \cite{Luecking} in studying the Schatten class Toeplitz operators on Hardy and Bergman spaces. Later, an analogous idea was applied to studying the Schatten class Toeplitz operators with measure symbols on doubling Fock spaces in \cite{Oliverdoubling}. This approach has also been used to study the Schatten class Hankel operators acting on generalized Fock spaces in \cite{Ghazaleh,JaniSchattenClass}.

Assume that $0<\delta<1/2$, $R>1$, and fix $M\in\mathbb{N}$. Let $\{z_{j}\}_{j=1}^{M}$ be the finite sublattice obtained by considering the first $M$ elements of $\{z_{j}\}_{j\geq 1}$. Then Lemma \ref{finitelattice1} implies that $\{z_{j}\}_{j=1}^{M}$ can be partitioned into $M_{R}(\{z_{j}\}_{j=1}^{M})$ subsequences such that any two different points $a_{j}$ and $a_{k}$ in the same subsequence satisfy $|a_{j}-a_{k}|\geq R\min(\rho(a_{j}),\rho(a_{k}))$. Let $\{a_{j}\}_{j=1}^{s}$ be one such subsequence. Let $\{B_{k,m}(z)= f_k(z)e_m^{z}\}_{k,m\geq 1}$ be an orthonormal basis of $F^{2}_{\phi}(\mathcal{H})$ with $\{f_k\}_{k\geq 1}$ being an orthonormal basis of $F^{2}_{\phi}(\mathbb{C}^{n}).$ We consider a bounded linear operator $A$ on $F^{2}_{\phi}(\mathcal{H})$ by $A f(z) =\sum_{k,m=1}^{s}\langle f, B_{k,m}\rangle k_{a_{k}}(z)e_{m}^{z}$. Indeed, let $f,g\in F^{2}_{\phi}(\mathcal{H})$. Then by the Cauchy-Schwarz inequality, (\ref{2.8}), Lemma \ref{Lemma2.3}, Lemma \ref{VectorValued-Lemma2.4Hicham}, and (\ref{2.6}), we obtain
\begin{align*}
    |\langle Af,g\rangle|&\leq \|f\|_{2,\phi}\left( \sum_{k,m=1}^{s}|\langle k_{a_{k}}e_{m},g\rangle|^{2}\right)^{1/2} \\
    &\simeq\|f\|_{2,\phi}\left( \sum_{k,m=1}^{s}\left|\int_{\C^{n}}\rho(a_{k})^{n}e^{-\phi(a_{k})}\langle e_{m},g(z)\rangle_{\mathcal{H}}K(z,a_{k})e^{-2\phi(z)}dA(z)\right|^{2}\right)^{1/2}\\
    &=\|f\|_{2,\phi}\left( \sum_{k,m=1}^{s}\rho(a_{k})^{2n}e^{-2\phi(a_{k})}|\langle e_{m},g(a_{k})\rangle_{\mathcal{H}}|^{2}\right)^{1/2}\\
    &\lesssim \|f\|_{2,\phi}\left( \sum_{k,m=1}^{s}\int_{D^{\delta}(a_{k})}|\langle e_{m},g(w)\rangle_{\mathcal{H}}|^{2}e^{-2\phi(w)}dA(w)\right)^{1/2}\\
    &\leq \|f\|_{2,\phi}\left( \sum_{k=1}^{s}\int_{D^{\delta}(a_{k})}\|g(w)\|^{2}_{\mathcal{H}}|^{2}e^{-2\phi(w)}dA(w)\right)^{1/2}
    \leq N^{1/2}\|f\|_{2,\phi}\|g\|_{2,\phi},
\end{align*}
where the constants do not depend on $s$. Hence, $A$ is bounded.
Moreover, let $U(z)=\sum_{j=1}^{s}G\circ M_{z}\chi_{D^{\delta}(a_{j})}(z)$. Then $U\leq N G$, where $N$ is an in (\ref{2.6}). Since $T_G\in S_p,$ we can conclude that $T_{U}\in S_{p}$, and $\|T_U\|_{S_p}\leq N \|T_G\|_{S_p} .$ Set $T=A^*T_U A$ such that  $\|T\|_{S_p}\lesssim \|T_U\|_{S_p}$. It is easy to see that when $k,m>s$, $\langle TB_{k,m},B_{k,m}\rangle=\langle T_{U}AB_{k,m},AB_{k,m}\rangle=0$. We can split $T$ as $T=D_{s}+M_{s},$ where $D_{s}$ is the diagonal operator defined by
\begin{equation}\label{diagElement}
    D_{s}f=\sum_{k,m=1}^{s}  \langle TB_{k,m}, B_{k,m}\rangle \langle f,B_{k,m}\rangle B_{k,m}, \quad \textrm{ where } f\in F^{2}_{\phi}(\mathcal{H}),
\end{equation}
and $M_{s}$ is the off-diagonal operator defined by
\begin{equation}\label{offdiagElement}
\begin{split}
    M_{s}f
    &=\sum_{k,m=1}^{s} \sum_{\substack{r,n=1 \\ r\neq k, m\neq n}}^{s}  \langle TB_{k,m}, B_{r,n}\rangle\langle f,B_{k,m}\rangle B_{r,n}
    + \sum_{k,m=1}^{s} \sum_{\substack{r=1 \\ r\neq k, }}^{s}  \langle TB_{k,m}, B_{r,m}\rangle\langle f,B_{k,m}\rangle B_{r,m}\\
    & +\sum_{k,m=1}^{s} \sum_{\substack{n=1 \\ m\neq n,}}^{s}  \langle TB_{k,m}, B_{k,n}\rangle\langle f,B_{k,m}\rangle B_{k,n}, \quad \textrm{ where } f\in F^{2}_{\phi}(\mathcal{H}).
    \end{split}
\end{equation}
Recall that $U(z)=0$ if $z\notin \cup_{j=1}^{s}D^{\delta}(a_{j})$. Then using (\ref{2.10}), and positivity of $G(z)$ and $\hat{G}^{op}_{\delta}(z)$, there is a constant $C_{1}>0$, only depending on $\delta$ such that
\begin{equation}\label{Dp}
\begin{split}
    \|D_{s}\|^{p}_{S_{p}}&= \sum_{i,j=1}^{s}\left| \langle D_s B_{i,j },B_{i,j}\rangle\right|^{p}
    = \sum_{m=1}^{s}\sum_{k=1}^{s} \left| \langle T B_{k,m},B_{k,m}\rangle\right|^{p}\\
    &= \sum_{m=1}^{s}\sum_{k=1}^{s} \left| \int_{\mathbb{C}^{n}}\langle T B_{k,m}(z),B_{k,m}(z)\rangle_{\mathcal{H}}e^{-2\phi(z)}dA(z)\right|^{p}\\
    &= \sum_{m=1}^{s}\sum_{k=1}^{s} \left| \int_{\mathbb{C}^{n}}\langle T_{U} k_{a_{k}}(z)e_{m}^{z},k_{a_{k}}(z)e_{m}^{z}\rangle_{\mathcal{H}}e^{-2\phi(z)}dA(z)\right|^{p}\\
    &=\sum_{m=1}^{s}\sum_{k=1}^{s} \left| \int_{\mathbb{C}^{n}}\langle U(z) k_{a_{k}}(z)e_{m}^{z},k_{a_{k}}(z)e_{m}^{z}\rangle_{\mathcal{H}}e^{-2\phi(z)}dA(z)\right|^{p}\\
    &\geq \sum_{m=1}^{s}\sum_{k=1}^{s} \left| \int_{D^{\delta}(a_{k})}\langle G(z) e_{m}^{z},e_{m}^{z}\rangle_{\mathcal{H}} 
|k_{a_{k}}(z)|^{2} e^{-2\phi(z)}dA(z)\right|^{p}\\
&\geq C_1 \sum_{m=1}^{s}\sum_{k=1}^{s} \left| \int_{D^{\delta}(a_{k})}\langle G(z) e_{m}^{z},e_{m}^{z}\rangle_{\mathcal{H}} \frac{dA(z)}{\rho(z)^{2n}}\right|^{p}
= C_1 \sum_{m=1}^{s}\sum_{k=1}^{s} \left( \langle \hat{G}^{op}_{\delta}(a_{k}) e_{m}^{z},e_{m}^{z}\rangle_{\mathcal{H}} \right)^{p}.
\end{split}
\end{equation}
On the other hand, by Proposition 1.29 in \cite{ZhuOperatorTheory}, and using the fact that $(x+y)^{p}\leq x^{p}+y^{p}$ for $p\leq 1$, we obtain
\begin{equation}\label{offdiag1}
  \begin{split}
\|M_{s}\|^{p}_{S_{p}}&\leq\sum_{s,q=1}^{s} \sum_{i,j=1}^{s}\left| \langle M_s B_{s,q},B_{i,j}\rangle\right|^{p}\\
    &= \sum_{\substack{r,k=1\\ r\neq k}}^{s} \sum_{\substack{m,n=1\\ m\neq n}}^{s}\left| \langle T B_{k,m},B_{r,n}\rangle\right|^{p}+
     \sum_{\substack{k,r=1\\ r\neq k}}^{s}\sum_{m=1}^{s}\left| \langle T B_{k,m},B_{r,m}\rangle\right|^{p}
     +
\sum_{k=1}^{s} \sum_{\substack{n=1\\  m\neq n}}^{s}\left| \langle T B_{k,m},B_{k,n}\rangle\right|^{p}\\
&=\sum_{\substack{r,k=1\\ r\neq k}}^{s} \sum_{\substack{m,n=1\\ m\neq n}}^{s} \left| \int_{\mathbb{C}^{n}}\langle U(z) k_{a_{k}}(z)e_{m}^{z},k_{a_{r}}(z)e_{n}^{z}\rangle_{\mathcal{H}}e^{-2\phi(z)}dA(z)\right|^{p}\\
&\quad\quad+
    \sum_{\substack{k,r=1\\ r\neq k}}^{s}\sum_{m=1}^{s}\left| \int_{\mathbb{C}^{n}}\langle U(z) k_{a_{k}}(z)e_{m}^{z},k_{a_{r}}(z)e_{m}^{z}\rangle_{\mathcal{H}}e^{-2\phi(z)}dA(z)\right|^{p}\\
    &\quad\quad+
\sum_{k=1}^{s} \sum_{\substack{n=1\\  m\neq n}}^{s}\left| \int_{\mathbb{C}^{n}}\langle U(z) k_{a_{k}}(z)e_{m}^{z},k_{a_{k}}(z)e_{n}^{z}\rangle_{\mathcal{H}}e^{-2\phi(z)}dA(z)\right|^{p}.
\end{split}  
\end{equation}
Then by the definition of $U(z)$, and since $G(z)$ is a positive operator, we can write (\ref{offdiag1}) as follows. Note that positivity of $G(z)$ implies that $\langle G(z) e_{m}^{z},e_{m}^{z}\rangle_{\mathcal{H}}\geq 0$, but $\langle G(z) e_{m}^{z},e_{n}^{z}\rangle_{\mathcal{H}}$ is  a complex number.
\begin{equation}\label{offdiag2}
    \begin{split}
\|M_{s}\|^{p}_{S_{p}}& \leq N^p  
\sum_{\substack{r,k=1\\ r\neq k}}^{s} \sum_{\substack{m,n=1\\ m\neq n}}^{s}\left|\sum_{j=1}^{s} \int_{D^{\delta}(z_{j})}\langle G(z) e_{m}^{z},e_{n}^{z}\rangle_{\mathcal{H}}|k_{a_{k}}(z)||k_{a_{r}}(z)|e^{-2\phi(z)}dA(z)\right|^{p}\\
&\quad\quad+ N^p
    \sum_{\substack{k,r=1\\ r\neq k}}^{s}\sum_{m=1}^{s}\left( \sum_{j=1}^{s} \int_{D^{\delta}(z_{j})}\langle G(z) e_{m}^{z},e_{m}^{z}\rangle_{\mathcal{H}}|k_{a_{k}}(z)||k_{a_{r}}(z)|e^{-2\phi(z)}dA(z)\right)^{p}\\
    &\quad\quad+ N^p
\sum_{k=1}^{s} \sum_{\substack{n=1\\  m\neq n}}^{s}\left| \sum_{j=1}^{s} \int_{D^{\delta}(z_{j})}\langle G(z) e_{m}^{z},e_{n}^{z}\rangle_{\mathcal{H}}|k_{a_{k}}(z)|^{2}e^{-2\phi(z)}dA(z)\right|^{p}.
    \end{split}
\end{equation}
Define 
\begin{equation*}
    J^{m,n}_{k,r}(G,s)= \sum_{j=1}^{s} \int_{D^{\delta}(a_{j})}\langle G(z) e_{m}^{z},e_{n}^{z}\rangle_{\mathcal{H}}|k_{a_{k}}(z)||k_{a_{r}}(z)|e^{-2\phi(z)}dA(z).
\end{equation*}
Since $k\neq r$, then $|a_{k}-a_{r}|\geq R\min(\rho(a_{k}),\rho(a_{r}))$. Thus for $z\in D^{\delta}(a_{j})$ it is easy to see that either
\begin{equation}\label{latticeMink}
    |z-a_{k}|\geq \tilde{R}\min(\rho(z),\rho(a_{k})),
\end{equation}
or
\begin{equation}\label{latticeMinr}
    |z-a_{j}|\geq \tilde{R}\min(\rho(z),\rho(a_{j})),
\end{equation}
where $\tilde{R}=\frac{R-1}{3}$. Indeed, since $k\neq r$, then either $j\neq k$ or $j\neq r$. Without loss of generality assume that $j\neq k$, and therefore  $|a_{j}-a_{k}|\geq R\min(\rho(a_{j}),\rho(a_{k}))$. By contradiction, assume that $ |z-a_{k}|< \tilde{R}\min(\rho(z),\rho(a_{k}))$. Then by (\ref{2.2}),
\begin{align*}
    |a_{j}-a_{k}|&\leq |a_{j}-z|+|z-a_{k}|< \delta\rho(a_{j})+\tilde{R}\min(\rho(z),\rho(a_{k}))\\
    &\leq \delta\rho(a_{j})+\tilde{R}\min((1+\delta)\rho(a_{j}),\rho(a_{k}))\\
     &\leq \delta\rho(a_{j})+(1+\delta)\tilde{R}\min(\rho(a_{j}),\rho(a_{k})).
\end{align*}
If $\rho(a_{j})\leq \rho(a_{k})$, then $|a_{j}-a_{k}|<(\delta+(1+\delta)\tilde{R})\rho(a_{j})<\frac{1+3\tilde{R}}{2}\rho(a_{j})=\frac{R}{2}\rho(a_{j})$, which is a contradiction. Similarly, if $\rho(a_{k})\leq \rho(a_{j})$, (\ref{2.1}) implies that
\begin{align*}
  |a_{j}-a_{k}|&<  \delta\rho(a_{j})+(1+\delta)\tilde{R}\min(\rho(a_{j}),\rho(a_{k}))\\
  &\leq \delta\rho(a_{k})+\delta |a_{j}-a_{k}|+(1+\delta)\tilde{R}\rho(a_{k}).
\end{align*}
Hence, $|a_{j}-a_{k}|<\frac{\delta+(1+\delta)\tilde{R}}{1-\delta}\rho(a_{k})<(1+3\tilde{R})\rho(a_{k})=R\rho(a_{k})$, resulting in a contradiction. Therefore we can conclude that either (\ref{latticeMink}) or (\ref{latticeMinr}) holds.

Using (\ref{pk}) and (\ref{2.8}), we can write
\begin{equation*}
    |k_{a_{k}}(z)| |k_{a_{r}}(z)|e^{-2\phi(z)}\lesssim \frac{e^{\frac{-\epsilon}{2}[d_{\rho}(z,a_{k})+d_{\rho}(z,a_{r})]}}{\rho(z)^{n}}|k_{a_{k}}(z)|^{1/2} |k_{a_{r}}(z)|^{1/2}e^{-\phi(z)}.
\end{equation*}
Furthermore, since $z\in D^{\delta}(a_{j})$, and $r\neq k$, we can assume that $j\neq k$, and by the above argument we have $d_{\rho}(z,a_{k})+d_{\rho}(z,a_{r})\geq d_{\rho}(a_r,a_{k})\geq \tilde{R}$. Hence, for $z\in D^{\delta}(a_{j})$, we can conclude that $e^{\frac{-\epsilon}{2}[d_{\rho}(z,a_{k})+d_{\rho}(z,a_{r})]}\leq e^{\frac{-\epsilon}{2}\tilde{R}}$. Hence, using $\rho(z)\simeq \rho(a_{j})$, we obtain
\begin{equation*}
    J^{m,n}_{k,r}(G,s)\lesssim e^{\frac{-\epsilon}{2}\tilde{R}} \sum_{j=1}^{s} \frac{1}{\rho(a_{j})^{n}}\int_{D^{\delta}(a_{j})}\langle G(z) e_{m}^{z},e_{n}^{z}\rangle_{\mathcal{H}}|k_{a_{k}}(z)|^{1/2}|k_{a_{r}}(z)|^{1/2}e^{-\phi(z)}dA(z).
\end{equation*}
By Lemma \ref{Lemma2.4-HichamToeplitz} we have
\begin{equation*}
  |k_{a_{k}}(z)|^{1/2}e^{-\phi(z)/2}\lesssim \left(\frac{1}{\rho(z)^{2n}}\int_{D^{\delta}(z)}|k_{a_{k}}(\xi)|^{p/2}e^{-\frac{p}{2}\phi(\xi)}dA(\xi)\right)^{1/p}.  
\end{equation*}
Since $z\in D^{\delta}(a_{j})$, there exists some $m_{1}>1$ such that $D^{\delta}(z)\subset D^{m_{1}\delta}(a_{j})$. Therefore,
\begin{equation*}
  |k_{a_{k}}(z)|^{1/2}e^{-\phi(z)/2}\lesssim \frac{1}{\rho(z)^{2n/p}}\left(\int_{D^{m_{1}\delta}(a_{j})}|k_{a_{k}}(\xi)|^{p/2}e^{-\frac{p}{2}\phi(\xi)}dA(\xi)\right)^{1/p}=\frac{1}{\rho(z)^{2n/p}} S_{k}(a_{j})^{1/p},  
\end{equation*}
where
\begin{equation*}
    S_{k}(z)=\int_{D^{m_{1}\delta}(z)}|k_{a_{k}}(\xi)|^{p/2}e^{-\frac{p}{2}\phi(\xi)}dA(\xi).
\end{equation*}
Similarly,
\begin{equation*}
  |k_{a_{r}}(z)|^{1/2}e^{-\phi(z)/2}\lesssim \frac{1}{\rho(z)^{2n/p}} S_{r}(a_{j})^{1/p}.  
\end{equation*}
Therefore,
\begin{equation*}
    J^{m,n}_{k,r}(G,s)\lesssim e^{\frac{-\epsilon}{2}\tilde{R}} \sum_{j=1}^{s} \frac{1}{\rho(a_{j})^{n}}\int_{D^{\delta}(a_{j})}\langle G(z) e_{m}^{z},e_{n}^{z}\rangle_{\mathcal{H}}\frac{1}{\rho(z)^{4n/p}}S_{k}(a_{j})^{1/p} S_{r}(a_{j})^{1/p}dA(z).
\end{equation*}
Since $0<p<1$, $4/p-1>1$, and
\begin{align*}
    J^{m,n}_{k,r}(G,s)&\lesssim e^{\frac{-\epsilon}{2}\tilde{R}} \sum_{j=1}^{s} \frac{1}{\rho(a_{j})^{n}}\int_{D^{\delta}(a_{j})}\langle G(z) e_{m}^{z},e_{n}^{z}\rangle_{\mathcal{H}}\frac{1}{\rho(a_{j})^{4n/p}}S_{k}(a_{j})^{1/p} S_{r}(a_{j})^{1/p}dA(z)\\
    &\simeq e^{\frac{-\epsilon}{2}\tilde{R}} \sum_{j=1}^{s} \frac{1}{\rho(a_{j})^{n(\frac{4}{p}-1)}}S_{k}(a_{j})^{1/p} S_{r}(a_{j})^{1/p}\int_{D^{\delta}(a_{j})}\langle G(z) e_{m}^{z},e_{n}^{z}\rangle_{\mathcal{H}}\frac{dA(z)}{\rho(z)^{2n}}\\
    & \simeq e^{\frac{-\epsilon}{2}\tilde{R}} \sum_{j=1}^{s} \rho(a_{j})^{n(1-\frac{4}{p})}S_{k}(a_{j})^{1/p} S_{r}(a_{j})^{1/p}\langle \hat{G}^{op}_{\delta}(a_{j}) e_{m}^{z},e_{n}^{z}\rangle_{\mathcal{H}}.
\end{align*}
Then since $0<p<1$,
\begin{equation}\label{J}
   \left| J^{m,n}_{k,r}(G,s)\right|^{p}\lesssim e^{\frac{-p\epsilon}{2}\tilde{R}} \sum_{j=1}^{s} |\langle \hat{G}^{op}_{\delta}(a_{j}) e_{m}^{z},e_{n}^{z}\rangle_{\mathcal{H}}|^{p} \rho(a_{j})^{n(p-4)}S_{k}(a_{j}) S_{r}(a_{j}).
\end{equation}
Recall that
\begin{align*}
    \sum_{k=1}^{s}S_{k}(a_{j})&= \sum_{k=1}^{s}\int_{D^{m_{1}\delta}(a_{j})}|k_{a_{k}}(\xi)|^{p/2}e^{-\frac{p}{2}\phi(\xi)}dA(\xi)\\
    &=\int_{D^{m_{1}\delta}(a_{j})}\left(\sum_{k=1}^{s}|k_{a_{k}}(\xi)|^{p/2}\right)e^{-\frac{p}{2}\phi(\xi)}dA(\xi).
\end{align*}
Moreover, using (\ref{2.6}), (\ref{2.8}), (\ref{kernelIntegral}), and Lemma \ref{Lemma2.4-HichamToeplitz}, we can write
\begin{align*}
    \sum_{k=1}^{s}|k_{a_{k}}(\xi)|^{p/2}&=\sum_{k=1}^{s}\rho(a_{k})|K_{\xi}(a_{k})|^{p/2}e^{\frac{-p}{2}\phi(a_{k})}\\
    &\lesssim \sum_{k=1}^{s}\int_{D^{\delta}(a_{k})}|K_{\xi}(z)|^{p/2}e^{\frac{-p}{2}\phi(z)}\rho(z)^{n(\frac{p}{2}-2)}dA(z)\\
    &\leq N \int_{\C^{n}}|K_{\xi}(z)|^{p/2}e^{\frac{-p}{2}\phi(z)}\rho(z)^{n(\frac{p}{2}-2)}dA(z)
    \simeq N e^{\frac{p}{2}\phi(\xi)}\rho(\xi)^{\frac{-np}{2}}.
\end{align*}
Hence,
\begin{align*}
   \sum_{k=1}^{s}S_{k}(a_{j})&\lesssim N \int_{D^{m_{1}\delta}(a_{j})} \rho(\xi)^{\frac{-np}{2}}dA(\xi)\simeq \rho(a_{j})^{2n-\frac{np}{2}},
\end{align*}
where the constant only depends on $\delta$. Similarly,
\begin{align*}
   \sum_{r=1}^{s}S_{r}(a_{j})&\lesssim \rho(a_{j})^{2n-\frac{np}{2}},
\end{align*}
and this together with (\ref{J}) imply that
\begin{equation}\label{Jp}
     \sum_{\substack{k,r=1\\ r\neq k}}^{s} \left| J^{m,n}_{k,r}(G,s)\right|^{p}\lesssim e^{\frac{-p\epsilon}{2}\tilde{R}} \sum_{j=1}^{s} |\langle \hat{G}^{op}_{\delta}(a_{j}) e_{m}^{z},e_{n}^{z}\rangle_{\mathcal{H}}|^{p},
\end{equation}
where the constant only depends on $0<\delta<1/2$.

Since $T_{G}\in S_{p}(F^{2}_{\phi}(\mathcal{H}))$, it is in particular compact. Lemma \ref{compactnessOfG-hat} then implies that $\hat{G}^{op}_{\delta}(a_{j})$ is a compact operator on $\mathcal{H}$ for every $a_{j}\in \C^{n}$. Since $\hat{G}^{op}_{\delta}(a_{j})$ is positive, it is, in particular, self-adjoint. Then the spectral Theorem for self-adjoint and compact operators implies that there exists an orthonormal basis $\{e_{n}^{j}\}_{n\geq 1}$ of $\mathcal{H}$ consisting of eigenvectors of $\hat{G}^{op}_{\delta}(a_{j})$. That is,
\begin{equation*}
\mathcal{H}=\overline{\operatorname{span}\{e_{n}^{j}\}},\quad \textrm{where } n\geq 1.
\end{equation*}
Hence, $\langle \hat{G}^{op}_{\delta}(a_{j})e_{m}^{j},e_{n}^{j}\rangle_{\mathcal{H}}=0$, for $m\neq n$. 
Comparing (\ref{offdiag2}) with (\ref{J}), and by the positivity of $\hat{G}^{op}_{\delta}(a_{j})$, we can see that in the above basis
\begin{equation}\label{offdiagfinal}
    \begin{split}
\|M_s\|^{p}_{S_{p}}& \lesssim  
\sum_{k,m=1}^{s} \sum_{\substack{r,n=1\\ r\neq k,m\neq n}}^{s} \left|J^{m,n}_{k,r}(G,s)\right|^{p}+
    \sum_{k,m=1}^{s} \sum_{\substack{r=1\\ r\neq k}}^{s}\left( J^{m,m}_{k,r}(G,s)\right)^{p}+
\sum_{k,m=1}^{s} \sum_{\substack{n=1\\  m\neq n}}^{s}\left| J^{m,n}_{k,k}(G,s)\right|^{p}\\
&\lesssim  e^{\frac{-p\epsilon}{2}\tilde{R}} \sum_{\substack{m,n=1\\ m\neq n}}^{s}\sum_{j=1}^{s} |\langle \hat{G}^{op}_{\delta}(a_{j}) e_{m}^{j},e_{n}^{j}\rangle_{\mathcal{H}}|^{p}
+\lesssim e^{\frac{-p\epsilon}{2}\tilde{R}} \sum_{m=1}^{s}\sum_{j=1}^{s} |\langle \hat{G}^{op}_{\delta}(a_{j}) e_{m}^{j},e_{m}^{j}\rangle_{\mathcal{H}}|^{p}
\\&
\quad\quad\quad+\lesssim e^{\frac{-p\epsilon}{2}\tilde{R}} \sum_{\substack{m,n=1\\ m\neq n}}^{s}\sum_{j=1}^{s} |\langle \hat{G}^{op}_{\delta}(a_{j}) e_{m}^{j},e_{n}^{j}\rangle_{\mathcal{H}}|^{p}\\
&=e^{\frac{-p\epsilon}{2}\tilde{R}} \sum_{m=1}^{s}\sum_{j=1}^{s} \left(\langle \hat{G}^{op}_{\delta}(a_{j}) e_{m}^{j},e_{n}^{j}\rangle_{\mathcal{H}}\right)^{p},
    \end{split}
\end{equation}
where the first and the third terms vanish because of the compactness argument above. Therefore, by (\ref{Dp}) and (\ref{offdiagfinal}), we can conclude that
\begin{equation}\label{Ts}
\begin{split}
    \|T_{G}\|^{p}_{S_{p}}&\gtrsim \|T\|^{p}_{S_{p}}\geq \|D_{s}\|^p_{S_p}-\|M_{s}\|^p_{S_p}\\
    &\geq(C_{1}-Q(N)e^{\frac{-p\epsilon}{2}\tilde{R}}) \sum_{m,j=1}^{s}\left( \langle \hat{G}^{op}_{\delta}(z_{j}) e_{m}^{j},e_{m}^{j}\rangle_{\mathcal{H}}\right)^{p},
    \end{split}
\end{equation}
where $Q(N)$ is some power of $N$, not depending on $s$. Since $e^{\frac{-p\epsilon}{2}\tilde{R}}\to 0$ as $R\to\infty$, there is always a constant $R=R(p,\delta,N)$ such that $C(p,\delta,N)=C_{1}-Q(N)e^{\frac{-p\epsilon}{2}\tilde{R}}>0$.

Fix $M$ to be a positive integer. Then Lemma \ref{finitelyManySubsequences} implies that $\{z_{j}\}_{j=1}^{M}$ can be partitioned into at most $6^{2n}R^{4n}\delta^{-2n}N_{\delta}$ subsequences such that any different points $z_{j}$ and $z_{k}$ in the same subsequence satisfy $|z_{j}-z_{k}|\geq R \min(\rho(z_{j}),\rho(z_{k}))$. Then by (\ref{Ts}), we obtain
\begin{equation*}
    \sum_{m,j=1}^{M}\left( \langle \hat{G}^{op}_{\delta}(z_{j}) e_{m}^{j},e_{m}^{j}\rangle_{\mathcal{H}}\right)^{p}\leq C(p,\delta,N_{\delta})^{-1}
6^{2n}R^{4n}\delta^{-2n}N_{\delta}\|T_{G}\|^{p}_{S_{p}}<\infty.
\end{equation*}
Since the RHS does not depend on $M$, we are done with the proof of Theorem \ref{thm1.7}.
\end{proof}

\subsection*{Acknowledgments} The authors would like to thank Haakan Hedenmalm for fruitful discussions.

\printbibliography

\end{document}